\date{}
\theoremstyle{plain}
\newtheorem*{thm}{Periodic Tiling Conjecture}
\newtheorem{theorem}{Theorem}[section]
\newtheorem*{theorem*}{Theorem}
\newtheorem*{question*}{Question}
\newtheorem{proposition}[theorem]{Proposition}
\newtheorem{lemma}[theorem]{Lemma}
\newtheorem{corollary}[theorem]{Corollary}
\theoremstyle{definition}
\newtheorem{question}[theorem]{Question}
\newtheorem{conjecture}[theorem]{Conjecture}
\numberwithin{equation}{section} \theoremstyle{plain}
\def\R{{\textbf{R}}}
\def\R{\mathbb R}
\def\N{\mathbb N}
\def\1{\mathbbm 1}
\def\Z{\mathbb Z}
\def\Q{\mathbb Q}
\begin{document}
\title{Periodicity of tiles in finite abelian  groups}
\author
{Shilei Fan}
\address
{Shilei FAN: School of Mathematics and Statistics, and Key Lab NAA--MOE, Central China Normal University, Wuhan 430079, China}
\email{slfan@ccnu.edu.cn}

\author
{Tao Zhang}
\address
{Tao ZHANG: Institute of Mathematics and Interdisciplinary Sciences, Xidian University, Xi'an 710071, China.}
\email{zhant220@163.com}
\date{}

\thanks{ S. L. FAN was partially supported by NSFC (grants No. 12331004  and No.  12231013). T. Zhang was partially supported by National Natural Science Foundation of China (Grant No. 12571357),
	 Natural Science Basic Research Program of Shaanxi  (Program No. 2025JC-YBMS-048), and Fundamental Research Funds for the Central Universities (Grant No. QTZX24082).}
\maketitle
\begin{abstract}
In this paper, we introduce the periodic tiling (PT) property for finite abelian groups. A finite abelian group is said to have the PT property if every non-periodic set that tiles the group by translation admits a periodic tiling complement. This notion extends the scope beyond groups with the Hajós property. We give a complete classification of cyclic groups possessing the PT property and  identify certain non-cyclic groups that enjoy the PT property but fail to satisfy the Hajós property.. As a byproduct, we obtain new families of groups for which the implication “Tile $\Longrightarrow$ Spectral” holds. Furthermore, for elementary $p$-groups with the PT property, by analyzing the structure of tiles, we prove that every tile is a complete set of representatives of the cosets of some subgroup.
\\
  \emph{Keywords:} periodic,  translation tile,  spectral set.
\end{abstract}

\tableofcontents

\section{Introduction}

Let $G$ be a locally compact abelian group, and let $\Omega\subset G$ be a Borel measurable subset with $0<\mathfrak{m}(\Omega)<\infty$, where $\mathfrak{m}$ denotes the Haar measure on $G$ (sometimes denoted by $dx$). We say that  $\Omega$ is a
{\bf  tile}  of $G$ by translation if there exists a  set $T \subset G$ of translates
such that 
\[\sum_{t\in T} 1_\Omega(x-t) =1 \text{ for almost all }x\in G,\]
 where $1_A$ denotes the indicator function of a set $A$. Such a set $T$ is called a
{\bf tiling complement} of $\Omega$ and the pair $(\Omega,T)$ is called a {\bf tiling pair}. In this setting,
a tiling pair  $(\Omega,T)$ in $G$ means that $G=\Omega+T$ forms a factorization, that is each element $g\in G$ can be written uniquely in the form $g=\omega+t$ with $\omega\in \Omega$ and $t\in T.$
 For  a tile $\Omega$ in $G$, we denote by $\mathcal{T}_{\Omega}$ the set of  all tiling complements of $\Omega$.

In this paper, we introduce the concept of periodic tiling  property for finite abelian groups.  Let $G$ be a finite abelian group.  A nonempty subset $A\subset G$ is said  to be  {\bf periodic} if there exists a non-zero element   $g\in G$ such that $A+g=A$.
We say that an abelian group $G$ has the {\bf periodic tiling property} (abbreviated {\bf PT property}) if, for every tiling pair $(\Omega,T)$ of $G$,  either $\Omega$ is periodic, or $T$ can be replaced by a periodic one (that is, there exists a periodic set  $T'$ such that $(\Omega,T')$ is also a tiling pair).   

In 1938, G.  Haj\'os \cite{Haj38} reformulated  a well-known conjecture by H. Minkowski. The original conjecture stated that  if a Euclidean space of any dimension is tiled by lattice-positioned hypercubes, then some pairs must meet face-to-face. 
In 1941, Haj\'os  \cite{Haj41}  proved  the following statement, which is equivalent to Minkowski’s
conjecture.
\begin{theorem*} [Haj\'os]
 Let $G$ be a finite abelian group. If G can be written as
a direct sum of cyclic sets $A_i$; that is, $G = A_1+A_2+\cdots+A_n$, where $A_i$ is of
the form $A_i = \{0, a, 2a, 3a, . . . , ka\}$ with $a \in G$ and $0$ being the unit element
of $G$, then at least one of $A_i$ is a subgroup of G.
\end{theorem*}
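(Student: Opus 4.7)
The plan is to translate the hypothesis $G = A_1 + \cdots + A_n$ into an identity in the integral group ring $\mathbb Z[G]$: writing $\widehat A := \sum_{a \in A} a$ for $A \subseteq G$, the factorization condition becomes $\widehat G = \widehat{A_1}\,\widehat{A_2}\cdots \widehat{A_n}$, and the conclusion to reach is that for some index $i$, the generator $a_i$ of $A_i = \{0, a_i, 2a_i, \ldots, (k_i-1)a_i\}$ satisfies $k_i a_i = 0$, equivalently $\widehat{A_i}$ is the sum over the cyclic subgroup $\langle a_i \rangle$.

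I would proceed by strong induction on $|G|$. The first ingredient is character theory. For each non-trivial character $\chi$ of $G$, the factorization gives
$$\prod_{i=1}^n \widehat{A_i}(\chi) = \widehat G(\chi) = 0,$$
so at least one factor
$$\widehat{A_i}(\chi) = 1 + \chi(a_i) + \chi(a_i)^2 + \cdots + \chi(a_i)^{k_i-1}$$
vanishes, which forces $\chi(a_i)$ to be a $k_i$-th root of unity distinct from $1$. Letting $\chi$ range over the dual of $G$ partitions the non-trivial characters according to which $A_i$ they annihilate, and $A_i$ is itself a subgroup of $G$ precisely when the order of $a_i$ divides $k_i$. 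Our goal is therefore to show that this divisibility must hold for at least one~$i$.

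The heart of the argument, and the part I expect to be hardest, is Haj\'os's substitution step: assuming for contradiction that no $A_i$ is a subgroup, one produces a new factorization of $G$ by cyclic sets of the same cardinalities but in which one generator has strictly smaller order. Iterating this reduction either produces a factor that actually is a subgroup, or allows us to quotient by a suitable cyclic subgroup $H$, apply the induction hypothesis to the smaller group $G/H$, and lift the resulting subgroup factor back. The obstruction lies in the substitution lemma itself: one must analyse, in $\mathbb Z[G]$, how the ``defect'' $\widehat{A_j'} - \widehat{A_j}$ interacts with the complementary product $\prod_{i \neq j} \widehat{A_i}$, ensuring both that a valid replacement $A_j'$ exists and that it remains a cyclic set $\{0, a_j', 2 a_j', \ldots\}$. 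The original execution by Haj\'os uses an ingenious algebraic manipulation together with a secondary induction on the prime factorization of the $k_i$; the surrounding character-theoretic setup and the descent to $G/H$ are, by comparison, routine.
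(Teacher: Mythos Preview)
The paper does not give its own proof of Haj\'os's theorem. The statement appears only in the introduction as historical background, with the proof attributed to Haj\'os \cite{Haj41}; the paper then moves on to define the Haj\'os property and the PT property. So there is no proof in the paper to compare your proposal against.

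As a sketch of the classical argument, your outline is broadly accurate: the translation into $\mathbb{Z}[G]$, the use of characters to see that each non-trivial $\chi$ kills some $\widehat{A_i}$, and the reduction by a replacement/substitution step combined with induction on $|G|$ are indeed the ingredients of Haj\'os's original proof. That said, you have correctly identified that the substitution lemma is where all the work lies, and your proposal does not actually carry it out---it only describes, in general terms, what such a step should accomplish. Since the paper treats the theorem as a black box, your level of detail is already more than what the paper provides; but if the goal were a self-contained proof, the substitution step would need to be written out in full (it is genuinely delicate, involving a careful analysis of which factor can be replaced and why the replacement remains a cyclic progression).
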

This result brought attention to the factorization of finite abelian groups into factors that are not necessarily subgroups.
Following \cite[Page 5]{SS09book}, we say that a finite abelian group $G$ has the 
{\bf Haj\'os property} if, for every tiling pair $(\Omega,T)$ of $G$, 
either $\Omega$ or $T$ is periodic.  
In the literature, such groups are sometimes referred to as good groups (see, e.g., \cite{Sands1962}); here, we retain the term “Hajós property” to remain consistent with \cite{SS09book}.

In 1949, Hajós \cite{H49} posed the problem of classifying all finite abelian groups having this property. The classification was completed by Sands \cite{Sands1962}, who provided the following complete list of finite abelian groups with the Hajós property:
\begin{equation}\label{Hajos group}
	\begin{aligned}
		&\mathbb{Z}_{p^nq},\  \mathbb{Z}_{p^2q^2},\  \mathbb{Z}_{p^2qr},\  \mathbb{Z}_{pqrs},\  \mathbb{Z}_{p^3}\times\mathbb{Z}_2^2,\
		\mathbb{Z}_{p^2}\times\mathbb{Z}_{2}^3,\ \mathbb{Z}_{p}\times\mathbb{Z}_{4}\times\mathbb{Z}_{2},\\
		&\mathbb{Z}_{p}\times\mathbb{Z}_{2}^4,\ \mathbb{Z}_p\times\mathbb{Z}_q\times\mathbb{Z}_{2}^2,\ \mathbb{Z}_p\times\mathbb{Z}_{3}^2,\ \mathbb{Z}_{9}\times\mathbb{Z}_3,\ \mathbb{Z}_{2^n}\times\mathbb{Z}_2,\ \mathbb{Z}_4^2,\ \mathbb{Z}_p^2,
	\end{aligned}
\end{equation}
where $p,q,r,s$ are distinct primes and $n$ is any positive integer. The case $p=2$ is admitted in all types of groups given above. All these groups and all subgroups of them have the Hajós property.

It is evident that the Hajós property implies the PT property, but the converse is not true. For instance, every tile in $\Z_p^3$  admits a periodic tiling complement (see Proposition \ref{pro6}), whereas $\Z_p^3$ ($p\geq 5$) itself does not have the Hajós property.  

This naturally leads to the following question: in analogy with Hajós’ problem, can one give a complete description of all finite abelian groups having the PT property? We have obtained some partial results in this direction, including structural characterizations and several new examples of PT groups beyond the Hajós list. Nevertheless, a full classification remains out of reach. Further discussion on this problem will be given in Section~\ref{sec-con}.

As a starting point, we consider the following question:

\begin{question*}
	If all  proper subgroups of a group  have the Haj\'os property, does the group itself have  the  PT property?
\end{question*}

We show that the answer is negative, even for certain cyclic groups. In fact, we completely classify those cyclic groups that have the PT property.
\begin{theorem}\label{thm-cyc}
Finite cyclic groups  with the PT property are exactly  the  subgroups of the  groups
\[\mathbb{Z}_{p_{1}^np_2p_3\cdots p_k},\quad \mathbb{Z}_{p_{1}^2p_{2}^2},\]
where $p_1,p_2,p_3,\dots,p_k$ are distinct primes and $n\geq 1$ is any positive integer.
\end{theorem}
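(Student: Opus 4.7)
My plan is to prove Theorem \ref{thm-cyc} in the two expected directions: (a) every cyclic group on the list has the PT property; (b) every cyclic group outside the list fails PT. The group $\Z_{p_1^2 p_2^2}$ is already on Sands's Haj\'os list, so for (a) the substantive case is the family $G = \Z_N$ with $N = p_1^n p_2 p_3 \cdots p_k$, in which only one prime appears with multiplicity at least two.

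For direction (a), I would encode a tiling pair $(\Om,T)$ via the mask polynomial identity $A(x) B(x) \equiv 1 + x + \cdots + x^{N-1} \pmod{x^N - 1}$, so that the factorization $G = \Om + T$ becomes a statement about which cyclotomic polynomials $\Phi_d$, for $d \mid N$, divide $A$ and $B$. The structural constraint that only $p_1$ appears with exponent greater than one makes the divisor lattice of $N$ extremely thin, and I expect every tile $\Om$ in this $G$ to satisfy the Coven--Meyerowitz $(T2)$ condition. From $(T2)$ one reads off a canonical periodic complement $T'$ --- a Cartesian product of arithmetic progressions indexed by the prime-power divisors of $N$ with $\Phi_d \mid A$ --- which will serve to replace $T$. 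I would organise the argument by induction on $k$: the base case $k = 2$ is the Haj\'os group $\Z_{p^n q}$; the inductive step projects $G \to G / H_k$, where $H_k$ is the unique subgroup of order $p_k$, invokes the inductive hypothesis on the image pair in $\Z_{p_1^n p_2 \cdots p_{k-1}}$, and lifts back through the fibre structure of the projection.

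For direction (b), a cyclic group outside the list must have at least two distinct primes of exponent $\geq 2$ and must differ from $\Z_{p^2 q^2}$. The minimal such orders are $p^3 q^2$, $p^2 q^3$, and $p^2 q^2 r$ (together with variants under permuting the primes), and for each I would exhibit an explicit tiling pair $(\Om,T)$ with both factors aperiodic; such constructions are available from the non-Haj\'os literature (Sands, Szab\'o, de Bruijn). The additional content required for non-PT is that \emph{no} periodic $T'$ tiles $G$ with $\Om$. Translated to the cyclotomic encoding, this asks that the set of $d \mid N$ with $\Phi_d \nmid A$ is rich enough to force any completion $B'(x)$ to a tiling of $\Z_N$ to break the periodicity criterion $B'(\zeta) = 0$ for every $\zeta$ of a fixed non-trivial order dividing $N$. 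From these minimal cases I would bootstrap to arbitrary non-list cyclic groups $\Z_n$ by lifting through the projection $\Z_n \to \Z_m$ for $m \mid n$ a minimal bad order, taking care that the lifted complement does not acquire spurious periodicity along the kernel.

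The main obstacle is the inductive step in (a): a tile $\Om$ that is non-periodic in $G$ can project to one that \emph{is} periodic in $G/H_k$, in which case the inductive hypothesis supplies only a periodic replacement in the quotient, and one must reconstruct a periodic complement in $G$ itself. I expect the cleanest resolution to be a structure theorem --- every tile in $\Z_{p_1^n p_2 \cdots p_k}$ is a union of cosets of some prime-power subgroup --- which in turn rests on verifying the Coven--Meyerowitz $(T1)$ condition for this family. Establishing that structural statement, and controlling how complements interact with the fibres of $G \to G/H_k$ when the projected tile gains unexpected periodicity, will be the hardest single step on the path to the theorem.
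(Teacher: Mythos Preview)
Your plan is viable but takes a different route from the paper, and the obstacle you isolate in the inductive step of (a) is exactly where the paper's machinery earns its keep. Rather than chase the Coven--Meyerowitz $(T2)$ condition through an induction on the number of prime factors, the paper introduces the \emph{uniformly periodic tiling} (UPT) property: $G$ has UPT if every tile is either uniformly periodic (all its complements share a common nonzero period) or dual uniformly periodic (replaceable by a single periodic set having the same set of complements). The cyclic $p$-group $\Z_{p^n}$ has UPT almost for free --- by the Haj\'os property, a non-periodic tile forces every complement to have period $p^{n-1}$ --- and the paper then proves a clean extension theorem: if $G$ has UPT and $m$ is square-free with $\gcd(|G|,m)=1$, then $G \times \Z_m$ has PT. Taking $G = \Z_{p_1^{\,n}}$ and $m = p_2 \cdots p_k$ finishes direction (a) with no induction on $k$ and no appeal to $(T2)$; the projection issue you worry about simply never arises. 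Your $(T2)$-based strategy would also succeed (Malikiosis has established $(T1)$--$(T2)$ for exactly this family), but it is heavier, and the structure theorem you propose as a fix is essentially as hard as the goal itself; the UPT route is both shorter and is what lets the paper handle the non-cyclic groups in Theorem~\ref{thm-noncyc} by the same mechanism. For direction (b) the two plans are much closer: the paper gives an explicit non-PT tiling pair in $\Z_{p^3 q^2}$, derives non-PT for $\Z_{p^2 q^2 r}$ from the failure of UPT in $\Z_{p^2 q^2}$ via a companion proposition (UPT failure in $G$ implies PT failure in $G \times \Z_m$), and then bootstraps to all larger cyclic groups by a general subgroup-inheritance theorem (a non-$2$-group with PT passes PT to every subgroup), whose proof is precisely the lift-through-projection construction you sketch.
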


In proving that $\mathbb{Z}_{p_{1}^np_2p_3\cdots p_k}$ has the PT property, we use the fact that all the tiling complements of a non-periodic tile in $\mathbb{Z}_{p_{1}^n}$  share  a common period  $p_{1}^{n-1}$. This idea extends naturally to certain non-cyclic groups.
 
 We need the following properties for tiles in a finite abelian group $G:$ 
 \begin{itemize}
 \item A tile $\Omega$ is {\bf uniformly  periodic} if all its tiling complements have  a common period, i.e. $T+g=T$ for some  $g \in G\setminus\{0\}$ and for every $T\in \mathcal{T}_{\Omega}$.
 \item  A tile  $\Omega$ is {\bf dual uniformly periodic} if there exists a periodic tile $\Omega'$ such that $(\Omega',T)$ is a tiling pair for each $T\in\mathcal{T}_\Omega$. 
 \end{itemize}
 
We say that  a finite abelian group $G$  possess the {\bf uniformly periodic tiling property (UPT property)} if every  tile in $G$ is either uniformly periodic or dual uniformly periodic.  It is straightforward to verify that the UPT property is strictly stronger than the PT property (see Lemma~\ref{UPT>PT}).
Groups with the PT property can be constructed from those possessing the UPT property.

\begin{theorem}\label{thm:UPTPEX}
Let $G$ be a finite abelian group with the UPT property. Suppose $m$ is a square-free positive integer such that $\gcd(|G|, m) = 1$. Then the direct product group $G \times \mathbb{Z}_m$ has UPT property, and hence the PT property.
\end{theorem}

As an application of Theorem~\ref{thm:UPTPEX}, by establishing that the $p$-groups
\[
   \mathbb{Z}_{2}^{5},\quad \mathbb{Z}_{p}^{3},\quad \mathbb{Z}_{p^n} \times \mathbb{Z}_p,\quad \mathbb{Z}_4\times \mathbb{Z}_2^2
\]
possess the UPT property, we obtain further families of groups with the PT property, beyond the cyclic groups listed in Theorem~\ref{thm-cyc}.

\begin{theorem}\label{thm-noncyc}
Let \( G \) be a subgroup of one of the following groups:
\begin{align*}
   &\mathbb{Z}_{2}^{5},\quad \mathbb{Z}_{p}^{3},\quad \mathbb{Z}_{p^n} \times \mathbb{Z}_p,\quad \mathbb{Z}_4\times \mathbb{Z}_2^2,
\end{align*}
where \( p \) is a prime and \( n \) a positive integer.  
If \( m \) is square-free and \( \gcd(|G|, m) = 1 \), then the group \( G \times \mathbb{Z}_m \) has the PT property.
\end{theorem}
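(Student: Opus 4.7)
The plan is to reduce the statement to Theorem \ref{thm:UPTPEX}(1). That theorem says precisely that if the $p$-part $G$ has the UPT property and $\gcd(|G|,m)=1$ with $m$ square-free, then $G\times\mathbb{Z}_m$ has the PT property. So it suffices to prove that every subgroup of one of the five listed $p$-groups itself has the UPT property. I would split the argument into two parts: first establishing UPT for the five $p$-groups in the list, then observing that all their subgroups are either cyclic of prime-power order (hence Haj\'os, hence UPT) or already in the list.

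The substantive work is the UPT verification for the $p$-groups themselves. For the elementary $p$-group $\mathbb{Z}_p^3$ (with $p\geq 3$), the plan is to exploit the structural theorem for tiles in elementary $p$-groups promised in the abstract: a tile must be a complete set of coset representatives of a subgroup $H$. If $H$ is non-trivial, the tile itself is periodic (trivially uniformly periodic); if $H$ is trivial, then $|\Omega|=1$, and the unique tiling complement is the whole group, again periodic. This yields UPT. For $\mathbb{Z}_{p^n}\times\mathbb{Z}_p$ one would adapt the cyclic argument underlying Theorem \ref{thm-cyc}: first show that a non-periodic tile $\Omega$ forces the projections onto the cyclic factor to be non-trivial, then argue that every tiling complement $T\in\mathcal{T}_\Omega$ inherits a common period of the form $(p^{n-1},0)$, by analyzing the mask polynomial/cyclotomic divisibility relations $\Omega(X)T(X)\equiv 1+X+\cdots+X^{p^n-1}\pmod{X^{p^n}-1}$ viewed over the cyclic component. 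For the three small $2$-groups $\mathbb{Z}_2^5$, $\mathbb{Z}_{2^3}\times\mathbb{Z}_2^2$, $\mathbb{Z}_{2^2}\times\mathbb{Z}_2^3$, which have orders at most $32$, the verification reduces to a finite case analysis based on the cardinality constraint $|\Omega|\cdot|T|=|G|$ and on characterizing the small number of possible tile shapes.

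Once UPT is in hand for the five $p$-groups, the closure under taking subgroups is routine: subgroups of $\mathbb{Z}_2^5$ and $\mathbb{Z}_p^3$ are elementary abelian of rank $\leq 3$ and hence fit into the $\mathbb{Z}_p^3$ case; subgroups of $\mathbb{Z}_{p^n}\times\mathbb{Z}_p$ are either cyclic $\mathbb{Z}_{p^k}$ (Haj\'os, hence UPT) or of the form $\mathbb{Z}_{p^k}\times\mathbb{Z}_p$ with $k\leq n$ (same family); subgroups of $\mathbb{Z}_{2^3}\times\mathbb{Z}_2^2$ and $\mathbb{Z}_{2^2}\times\mathbb{Z}_2^3$ are of the form $\mathbb{Z}_{2^a}\times\mathbb{Z}_2^b$ with $(a,b)$ again in the listed family or with $a\leq 1$, reducing to the elementary case. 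A direct application of Theorem \ref{thm:UPTPEX}(1) then finishes the proof. The principal obstacle is clearly the UPT proof for $\mathbb{Z}_{p^n}\times\mathbb{Z}_p$ with unbounded $n$, where finite case analysis is unavailable and one must extract a uniform period for all tiling complements from the global structure of the mask polynomial.
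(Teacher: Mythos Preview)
Your high-level strategy---establish the UPT property for each listed $p$-group (and its subgroups) and then invoke Theorem~\ref{thm:UPTPEX}(1)---is exactly the paper's approach. However, your proposed verification of UPT for $\mathbb{Z}_p^3$ has a genuine gap, and the sketch for $\mathbb{Z}_{p^n}\times\mathbb{Z}_p$ is misdirected.

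For $\mathbb{Z}_p^3$ you invoke the structural theorem from the abstract (Theorem~\ref{cor-finitgroup}), but that result is stated and proved \emph{under the hypothesis} that $\mathbb{Z}_p^n$ already has the PT property; it is a downstream consequence of Proposition~\ref{pro6}, not an input to it. Using it here is circular. Independently, your inference from the structural theorem is incorrect: saying that $\Omega$ is a complete set of coset representatives of a subgroup $H$ means $(\Omega,H)$ is a tiling pair, i.e.\ $\Omega$ is a \emph{transversal} for $G/H$. A transversal is almost never periodic (e.g.\ $\{0,1\}$ is a transversal for $\mathbb{Z}_4/\{0,2\}$ but has no period), so ``$H$ non-trivial $\Rightarrow$ $\Omega$ periodic'' fails. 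The paper's actual argument (Proposition~\ref{pro6}) goes the other way: it shows $\Omega$ is \emph{dual} uniformly periodic by exhibiting a subgroup $\Omega'$ with $\mathcal{T}_\Omega\subset\mathcal{T}_{\Omega'}$, via a short Fourier/zero-set computation split into $|\Omega|=p$ and $|\Omega|=p^2$.

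For $\mathbb{Z}_{p^n}\times\mathbb{Z}_p$ your plan asserts that every $T\in\mathcal{T}_\Omega$ acquires the common period $(p^{n-1},0)$. This is not what happens in general: in the paper's proof (Proposition~\ref{prop-UPT1}) there are cases (governed by the size of $I_\Omega=\{i:(p^i,0)\in\mathcal{Z}_\Omega\}$ and by which $(1,j)$ lie in $\mathcal{Z}_\Omega$) where no common period of the complements is produced at all, and instead one constructs a periodic replacement $\Omega'$ for $\Omega$ (dual uniform periodicity). A purely cyclic mask-polynomial argument on the $\mathbb{Z}_{p^n}$ factor will not detect this dichotomy; you need Lemma~\ref{lem:3.2} and Lemma~\ref{lem:periodic} together with a case split. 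Your outline for the three order-$32$ $2$-groups (finite case analysis using the Haj\'os property) and for the closure under subgroups is fine and matches the paper.
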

It is worth noting that  the cyclic group $\mathbb{Z}_{p^2q^2}$ has the PT property but  fails to have the UPT property. 
The UPT property of $G$ is crucial in ensuring that $G\times \mathbb{Z}_m$ inherits the PT property (see Proposition~\ref{thm:UPTPEX2} and Corollary~\ref{NUPT}).  
On the other hand, since $\mathbb{Z}_{p^2}\times \mathbb{Z}_{p^2}$ and $\mathbb{Z}_{p^2q^2}$ (with $p$ and $q$ distinct primes) fail to satisfy the UPT property (see Propositions~\ref{PropP2Q2} and \ref{prop:p2p2}), it follows that any group containing either $\mathbb{Z}_{p^2}\times \mathbb{Z}_{p^2}$ or $\mathbb{Z}_{p^2q^2}$ as a proper subgroup typically does not possess the PT property, apart from a few exceptional cases.

\begin{theorem}\label{thm-nonQG}
Let $G$ be a finite abelian group containing a proper subgroup isomorphic to $\mathbb{Z}_{p^2}\times\mathbb{Z}_{p^2}$ or $\mathbb{Z}_{p^2q^2}$, where $p,q$ are distinct primes and $q$ is odd.  
If $G$ is not isomorphic to one of the following exceptional groups:
\[
   \mathbb{Z}_{p^2}^2 \times \mathbb{Z}_2,\quad 
   \mathbb{Z}_{2p^2}\times\mathbb{Z}_{p^2},\quad 
   \mathbb{Z}_{9}^2\times\mathbb{Z}_3,\quad 
   \mathbb{Z}_{4}^2\times\mathbb{Z}_{2}^2,\quad 
   \mathbb{Z}_{4q^2}\times\mathbb{Z}_2,
\]
then $G$ does not have the PT property.
\end{theorem}
To further explore families of groups with the PT property, we observe that the Rédei property offers a useful approach.
Following \cite{Sza04}, a finite abelian group is said to have the \textbf{Rédei property} if, for every tiling pair $(\Omega, T)$ of $G$ with $0\in \Omega \cap T$, at least one of $\Omega$ or $T$ is contained in a proper subgroup of $G$.
Some groups, such as $\mathbb{Z}_{p_1 p_2 \cdots p_k}$ with $k\ge 6$, possess the PT property but lack the Rédei property, while others, such as $\mathbb{Z}_{p^2 q^3}$, have the Rédei property but not the PT property.
Nevertheless, we shall see that combining the Rédei property with the UPT property of all proper subgroups guarantees that the group itself has the PT property.


We also introduce a weaker variant: a finite abelian group has the \textbf{weak Rédei property} if, for every tile $\Omega$ of $G$ with $0 \in \Omega$ and $\langle \Omega \rangle = G$, there exists a tiling complement $T$ contained in a proper subgroup of $G$.

\begin{theorem}\label{thm:redei}
	Let $G$ be a finite abelian group with the  weak Rédei property. If every proper subgroup of $G$ has the UPT property, then $G$ has the PT property.
\end{theorem}
It is shown in \cite[Theorems  9.3.3, 9.3.4 and 9.3.10]{Sza04} that the groups  
\[
\mathbb{Z}_3^4, \quad \mathbb{Z}_9^2,\quad \mathbb{Z}_2^6
\] 
have the Rédei property. As an immediate consequence of Theorem~\ref{thm:redei}, all of these groups also possess the PT property.

It is known that $\Z_4 \times \Z_4$ does not have the UPT property.  
Nevertheless, we shall prove that $\Z_8 \times \Z_4$ and $\Z_4 \times \Z_4 \times \Z_2$ both enjoy the PT property, relying on their Rédei property (see \cite[Theorem~9.3.6 and Corollary~9.3.2]{Sza04}).

\begin{theorem}\label{thm:84442}
The groups $G = \Z_8 \times \Z_4$ and $\Z_4^2 \times \Z_2$ have the PT property.
\end{theorem}

We now turn to the structure of tiles in groups with the PT property. In \cite{SS09book}, it was shown that every tile in a group with the Hajós property admits an \emph{ascending chain structure} (see Section~\ref{sec-structure} for the definition). In this paper, we prove that if every tile of a group has an ascending chain structure, then the group necessarily has the PT property. Moreover, we establish the following result.
 \begin{theorem}\label{theorem-structure}
	Let $G$ be a finite abelian  group. 
	\begin{enumerate}[{\rm (1)}]
		\item If $G$ and  all its subgroups have the PT property, then any tile  of $G$ have the ascending chain structure.
		\item If  any tile  of $G$ has ascending chain structure, then $G$ has the PT property.
	\end{enumerate}
\end{theorem}
As a consequence of Theorem \ref{theorem-structure}, we can characterize the tiles in elementary $p$-groups that have the PT property.
\begin{theorem}\label{cor-finitgroup}
	Suppose    $\Omega$ is a tile of the   elementary $p$-group $\mathbb{Z}_{p}^n$.
	\begin{enumerate}[{\rm (1)}]
		\item Case  $p\geq 3$: If $\mathbb{Z}_{p}^n$ has the PT property, then   $(\Omega,T)$  is a tiling pair  for some  subgroup  $T$.
		\item Case $p=2$: If  $\mathbb{Z}_{2}^n$  and  $\mathbb{Z}_{2}^{n-1}$ have PT property, then   $(\Omega,T)$  is a tiling pair for some subgroup   $T$.
	\end{enumerate}
\end{theorem}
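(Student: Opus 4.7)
The plan is to prove both cases simultaneously by induction on $n$, using the PT property at each step to reduce the problem in $\mathbb{Z}_p^n$ to a projected problem in $\mathbb{Z}_p^{n-1}$. The base case $n=1$ is immediate: any tile of $\mathbb{Z}_p$ has size $1$ or $p$, and its tiling complement (after translation) is $\mathbb{Z}_p$ or $\{0\}$, which is a subgroup.

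For the inductive step, let $\Omega$ be a tile of $\mathbb{Z}_p^n$ with $T \in \mathcal{T}_\Omega$. By the PT property applied to $\mathbb{Z}_p^n$, after possibly replacing $T$ with another tiling complement, we may assume that at least one of $\Omega, T$ is periodic. Write $A$ for the periodic side and $B$ for the other, and choose a subgroup $H \leq \mathbb{Z}_p^n$ of order $p$ contained in the stabilizer of $A$ (such $H$ exists since the stabilizer is a nontrivial $p$-group). Let $\pi : \mathbb{Z}_p^n \to \mathbb{Z}_p^n/H \cong \mathbb{Z}_p^{n-1}$ denote the quotient. Because $A$ is a disjoint union of $H$-cosets and $|\Omega|\cdot|T| = p^n$, a straightforward counting argument shows that $\pi|_B$ is injective and that $(\pi(\Omega), \pi(T))$ is a tiling pair of $\mathbb{Z}_p^{n-1}$. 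Applying the inductive hypothesis to the tile $\pi(\Omega)$ produces a subgroup $K \leq \mathbb{Z}_p^{n-1}$ tiling with $\pi(\Omega)$. This is the step where in case (2) the extra hypothesis that $\mathbb{Z}_2^{n-1}$ has PT is essential, since we need the inductive statement to be available at the next level down.

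The final step is to lift $K$ to a subgroup $\tilde K \leq \mathbb{Z}_p^n$ tiling with $\Omega$, with the correct choice of lift depending on which side was periodic. If $A = T$, take $\tilde K = \pi^{-1}(K)$, so $H \leq \tilde K$ and $|\tilde K| = p|K|$; if $A = \Omega$, use the $\mathbb{F}_p$-vector-space splitting of the short exact sequence $0 \to H \to \mathbb{Z}_p^n \to \mathbb{Z}_p^{n-1} \to 0$ to lift $K$ to a subgroup $\tilde K$ with $\tilde K \cap H = \{0\}$ and $|\tilde K| = |K|$. In both subcases a size count confirms $|\Omega|\cdot|\tilde K| = |\mathbb{Z}_p^n|$, and the unique decomposition $\pi(\Omega)\oplus K = \mathbb{Z}_p^{n-1}$—combined with either the $H$-periodicity of $\Omega$ (in the second subcase) or the injectivity of $\pi|_\Omega$ (in the first subcase)—yields that every $x \in \mathbb{Z}_p^n$ decomposes uniquely as $\omega + k$ with $\omega \in \Omega$ and $k \in \tilde K$, so $(\Omega, \tilde K)$ is a tiling pair.

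The main obstacle is the lift step: one must correctly match the structure of $\tilde K$ (containing $H$ versus transverse to $H$) to the side that was periodic, and then transfer the unique decomposition from the quotient back to $\mathbb{Z}_p^n$. A secondary subtlety, which is precisely why the $p=2$ case requires the additional hypothesis on $\mathbb{Z}_2^{n-1}$, is that the PT property does not in general descend to quotients, so the inductive hypothesis at level $n-1$ is not available for free in characteristic two and must be stipulated.
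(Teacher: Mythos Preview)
Your argument is correct and is in fact a cleaner, more self-contained version of what the paper does. The paper routes the proof through an auxiliary structural result (Proposition~\ref{theorem-structure}): it first shows that any tile in a group all of whose subgroups have the PT property admits an \emph{ascending chain structure}, then unpacks this chain in the elementary $p$-group case to read off a subgroup complement built from alternating blocks of generators. Your proof collapses these two steps into a single direct induction on $n$, exploiting the $\mathbb{F}_p$-vector-space structure both for the quotient step and for the lift (splitting the short exact sequence when $\Omega$ is the periodic side). The payoff of the paper's route is that Proposition~\ref{theorem-structure} is a general statement valid for any finite abelian group; the payoff of yours is that one never leaves the elementary $p$-group setting and the argument is shorter.

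There is one small gap in your handling of the $p=2$ induction. To invoke the inductive hypothesis at level $n-1$, you need the full hypothesis of part~(2) at that level, namely that \emph{both} $\mathbb{Z}_2^{n-1}$ and $\mathbb{Z}_2^{n-2}$ have the PT property. You only note that $\mathbb{Z}_2^{n-1}$ is covered by the stated hypothesis. The missing piece is supplied by Lemma~\ref{Thm-3.8} (Theorem~\ref{theorem-subgroup2}(2)): since $\mathbb{Z}_2^n = \mathbb{Z}_2^{n-2}\times\mathbb{Z}_2^2$ has the PT property, so does $\mathbb{Z}_2^{n-2}$. With this one-line addition the induction closes; the paper relies on the same fact implicitly when it invokes Proposition~\ref{theorem-structure}, whose hypothesis requires PT for \emph{all} subgroups.
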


The second motivation of this paper is to explore an application of the PT property to Fuglede’s conjecture. In harmonic and functional analysis, a fundamental question asks whether a geometric property of sets (tiling) and an analytic property (spectrality) are always two sides of the same coin. This question was initially posed by Fuglede \cite{F74} for finite-dimensional Euclidean spaces, stemmed from a question of Segal on the commutativity of certain partial differential operators.

\begin{conjecture}\label{Fuglede}
	A Borel set $\Omega\subset \mathbb{R}^d$ of positive and finite Lebesgue measure is a spectral set if and only if it is a tile.
\end{conjecture}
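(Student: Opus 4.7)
The plan is to attack the two implications separately, using Fourier-analytic methods, and then try to reduce the problem to the finite-group setting that is the focus of this paper. The overarching strategy is to first show that, in both directions, one can assume enough rationality/periodicity to pass to a finite abelian group quotient, and then invoke (or generalize) structural results like Lemma \ref{lem3p2}, since spectrality and tiling both admit clean Fourier-theoretic characterizations: $(\Omega,T)$ is a tiling pair iff $|\Omega|\cdot|T|=|G|$ and $\mathcal{Z}_\Omega\cup\mathcal{Z}_T=G\setminus\{0\}$, while $(\Omega,\Lambda)$ is a spectral pair iff $\Lambda-\Lambda\subseteq \mathcal{Z}_\Omega\cup\{0\}$ with $|\Lambda|=|\Omega|$.

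For the direction \emph{tile $\Longrightarrow$ spectral}, I would first try to reduce to periodic tilings: by a theorem of Lagarias--Wang (known in full generality in dimension one, and for convex bodies more broadly), a tiling complement $T$ of a bounded set $\Omega$ can be assumed to be a finite union of translates of a lattice $\Lambda_0$. After rescaling, the tiling descends to a factorization $\Omega_0+T_0$ of the finite abelian group $\Lambda_0^\ast/L$ for some finer lattice $L$ adapted to $\Omega$. At this point the paper's machinery becomes available: one would attempt to exhibit a spectrum for $\Omega_0$ inside the dual group, using the structural theorems for tiles in finite abelian groups with the PT property (Theorems \ref{thm-cyc}, \ref{thm-noncyc}, \ref{cor-finitgroup}), and then lift back to $\mathbb{R}^d$ by pulling the characters on $\Lambda_0^\ast/L$ back to frequencies in $\widehat{\mathbb{R}^d}$.

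For the direction \emph{spectral $\Longrightarrow$ tile}, the plan is to first establish that a spectrum $\Lambda$ is relatively dense of density $1/|\Omega|$ (Landau-type estimates) and, when possible, to show $\Lambda$ admits a rational/periodic structure, following the philosophy of Iosevich--Kolountzakis--Matolcsi. Once $\Lambda$ is periodic modulo some lattice, the spectral condition again descends to a finite abelian group, and one seeks a tiling complement $T$ for $\Omega$ there, again using the finite-group techniques developed in earlier sections. Coven--Meyerowitz conditions (T1) and (T2) would be the combinatorial engine in dimension one.

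The main obstacle is, of course, that Conjecture \ref{Fuglede} is extraordinarily deep and, as stated for general $d$, is known to resist any single unified approach: the finite-group reduction only works smoothly in low dimensions or under rationality assumptions, and there is no general mechanism that promotes a spectrum or tiling complement to a periodic one in $\mathbb{R}^d$. I would therefore focus effort on $d\leq 2$, where full periodicity reductions are available, and on $p$-group analogues where the PT property developed in this paper (Theorem \ref{cor-finitgroup}) already yields the implication ``tile $\Longrightarrow$ spectral.'' The genuinely hard step --- and the one where I expect the argument to stall --- is proving that an arbitrary spectral set $\Omega\subset\mathbb{R}^d$ must have a spectrum whose difference set lives in a lattice, since without such a periodicity statement the transition to a finite abelian group cannot even begin.
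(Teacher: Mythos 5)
There is a fundamental problem here that precedes any assessment of the individual steps: Conjecture \ref{Fuglede} is stated in the paper as a \emph{conjecture}, the paper offers no proof of it, and indeed no proof can exist, because the statement as written is false. As the paper itself records, both implications fail for $d\geq 3$: Tao \cite{T04} constructed spectral sets that do not tile in dimension $5$ (later reduced to $3$), and the converse direction was disproved in \cite{KM06,FMM06,M05,KM2006}. The conjecture remains open only for $d=1,2$. So a proposal that ``attacks the two implications separately'' for general $d$ is attempting to prove a false statement, and the honest admission at the end of your write-up --- that the argument stalls at the periodicity reduction --- is not merely a technical obstacle but the precise point where the counterexamples live. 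Worse, the finite-group transfer you invoke runs in the \emph{wrong direction} for your purposes: the known counterexamples are manufactured exactly by lifting spectral-but-not-tiling sets from finite groups such as $\Z_p^4$ and $\Z_2^{10}$ (see \cite{AABF17,FS20} and the discussion in the introduction) to $\R^d$. The reduction ``$\R^d$ to finite abelian group'' is a one-way valve: it shows that a counterexample in a finite group kills the Euclidean conjecture in sufficiently high dimension, not that finite-group positive results assemble into a Euclidean proof.

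Two specific steps would also fail even in the regime where the conjecture is open. First, your reduction of the tiling direction assumes a tiling complement can be taken to be a finite union of lattice translates; this is the Periodic Tiling Conjecture of Lagarias--Wang, which is known for $d=1$ and $d=2$ but was \emph{disproved} by Greenfeld and Tao \cite{GT24} in large dimensions, so it cannot serve as a general first step. Second, for the spectral direction there is no known mechanism forcing a spectrum $\Lambda\subset\R^d$ to have difference set inside a lattice --- Landau density bounds give relative density but nothing resembling rationality --- so, as you yourself anticipate, the passage to a finite group cannot begin. What is salvageable in your proposal is the restricted program you mention in the final paragraph: in finite abelian groups with the PT property, the implication ``tile $\Longrightarrow$ spectral'' does follow by the paper's inductive descent (pass to a quotient by a period of $\Omega$ or of its periodic tiling complement, obtain a spectral pair there, and lift the spectrum back), and this is exactly the content of the paper's Proposition in Section 7 and Theorem \ref{cor-finitgroup}. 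That argument, however, proves a theorem about finite groups, not Conjecture \ref{Fuglede}.
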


The original Fuglede conjecture (Conjecture \ref{Fuglede}) has been disproven in its full generality for dimensions 3 and above for both directions \cite{FMM06,KM2006,KM06,M05,T04}. This means that neither implication (tiling implies spectral and vice versa) holds true in these higher dimensions.  However, the connection between tiling and spectral properties remains an active area of research, particularly in lower dimensions. The conjecture is still open for the one-dimensional and two-dimensional cases ($\R$ and $\R^2$). There might be a deeper relationship to be discovered in these simpler settings (see \cite{DL2014} for a focused look on $\mathbb{R}$).

Despite the general counterexamples, the conjecture has been successfully proven for convex sets in higher dimensions. Iosevich, Katz, and Tao  \cite{IKT03}  initiated this progress in 2003 by demonstrating the validity of the conjecture for convex sets in $\mathbb{R}^2$. This result was later extended to $d = 3$ (three dimensions) by Greenfeld and Lev \cite{GL17} in 2017. Finally, Lev and Matolcsi \cite{LM21} achieved a major breakthrough in 2021 by proving the conjecture for all convex sets in $\mathbb{R}^n$ ($n\ge3$).

There has been a growing interest in extending the Fuglede conjecture beyond the realm of Euclidean spaces. Fuglede himself hinted at the possibility of exploring the conjecture in different settings. This has led to a more general version of the conjecture applicable to locally compact abelian groups.

The generalized Fuglede conjecture has been proved for different groups, particularly within the realm of finite abelian groups.  These successes include groups like $\mathbb{Z}_{p^{n}}$ \cite{L02}, $\mathbb{Z}_{p}^{d}$ ($p=2$ and $d\le6$; $p$ is an odd prime and $d=2$; $p=3,5,7$ and $d=3$) \cite{AABF17,FMV2022,FS20,IMP17},  $\mathbb{Z}_{p}\times\mathbb{Z}_{p^{n}}$ \cite{IMP17,S20,Z2022},    $\mathbb{Z}_{p}\times\mathbb{Z}_{pq}$ \cite{KS2021}  and $\mathbb{Z}_{pq}\times\mathbb{Z}_{pq}$ \cite{FKS2012},  $\mathbb{Z}_{p^{n}q^{m}}$ ($p<q$ and $m\le9$ or $n\le6$; $p^{m-2}<q^{4}$) \cite{KMSV20,M21,MK17}, $\mathbb{Z}_{pqr}$ \cite{S19}, $\mathbb{Z}_{p^{2}qr}$ \cite{Somlai21}, $\mathbb{Z}_{p^nqr}$ \cite{Z2023} and $\mathbb{Z}_{pqrs}$ \cite{KMSV2012}, where $p,q,r,s$ are distinct primes. Fan et al. \cite{FFLS19,FFS16} established the validity of the conjecture for the field $\Q_p$ of $p$-adic numbers, presenting the first example of an infinite abelian group where Fuglede's conjecture holds.

Building on the notation from \cite{DL2014}, we define $S-T(G)$ (respectively, $T-S(G)$) to indicate whether the ``Spectral $\Rightarrow$ Tile" (respectively,  ``Tile $\Rightarrow$ Spectral") direction of Fuglede's conjecture holds in group $G$. In this context, the following relationships are proved in \cite{DL2013,DL2014}:
\[T-S(\mathbb{R}) \Leftrightarrow T-S(\mathbb{Z}) \Leftrightarrow T-S(\mathbb{Z}_N),\ \forall N \in \N,\]
and 
\[S-T(\mathbb{R}) \Rightarrow S-T(\mathbb{Z}) \Rightarrow S-T(\mathbb{Z}_N),\ \forall N\in\N.\]
These relations highlight the critical role of finite cyclic groups, denoted by $\mathbb{Z}_N$ here, in understanding Fuglede's conjecture for the real numbers, $\mathbb{R}$.

 Focusing on the ``Tile $\Rightarrow$ Spectral" direction, Łaba \cite{L02} established $T-S(\mathbb{Z}_{p^nq^m})$ for distinct primes $p$ and $q$. Later, Łaba and Meyerowitz proved $T-S(\mathbb{Z}_n)$ for square-free integers $n$ (see the discussion in Tao's blog \cite{T2011} or \cite{S19}). More recently, Malikiosis \cite{M21} extended this result to groups of the form $\mathbb{Z}_{p_{1}^{n}p_2\cdots p_k}$, where $p_1, p_2, \dots, p_k$ are distinct primes. Łaba and Londner \cite{LL2022, LL22, LL23, LL24} introduced a new tool for studying tiles in cyclic groups, and in particular, they proved $T-S(\mathbb{Z}_{p_1^2p_2^2p_3^2p_4\dots p_k})$ and $T-S(\mathbb{Z}_{p_1^{n}p_2^mp_3p_4\dots p_k})$, where $p_1,\dots,p_k$ are distinct primes, and $n,m$ are positive integers. From the perspective of universal spectra, Zhou \cite{Zhou2024} obtained a similar result.
In this paper, we prove the following.
\begin{theorem}\label{PT--TS}
	Let $G$ be a finite abelian group. Assume that $G$ and all its subgroups have the PT property. If $\Omega$ tiles $G$ by translation, then $\Omega$ is a spectral set.
\end{theorem}

Combining Theorems~\ref{thm-noncyc}, \ref{thm:redei}, \ref{thm:84442} and \ref{PT--TS}, we obtain the following families of groups in which the ``Tile $\Rightarrow$ Spectral" implication holds.
\begin{corollary}
	\begin{enumerate} [{\rm (1)}]
		\item In each of the groups listed in \ref{Hajos group}, every tile is a spectral set.
	\item Let \( G \) be a subgroup of one of the following groups:
		\begin{align*}
			&\mathbb{Z}_{2}^{5},\ \mathbb{Z}_{p}^{3},\ \mathbb{Z}_{p^n} \times \mathbb{Z}_p,\ \mathbb{Z}_4\times \mathbb{Z}_2^2,
		\end{align*}
		where \( p \) is a prime and \( n \) is a positive integer. If \( m \) is square-free and \( \gcd(|G|, m) = 1 \), then every tile is spectral in  \( G \times \mathbb{Z}_m \).
		\item Every tile is spectral in the groups:
		\begin{align*}
			&\mathbb{Z}_{2}^{6},\ \mathbb{Z}_{3}^{4},\ \mathbb{Z}_{9}^2,\ \mathbb{Z}_8\times\mathbb{Z}_4,\ \mathbb{Z}_4^2\times\mathbb{Z}_2.
		\end{align*}
	\end{enumerate}
\end{corollary} 

It is known \cite{AABF17,FS20} that there exist spectral sets which are not tiles in   $\Z_{p}^{4}$ with $p\geq 3$ and also in $\Z_{2}^{10}$. However,   no counterexample is known among  $p$-groups for the ``Tile $\Longrightarrow$ Spectral" direction.

Motivated by the above discussion, it is natural to introduce the following terminology.  
\begin{itemize}
	\item A finite abelian group \(G\) is called a {\bf T–S group} if every translational tile in \(G\) is also a spectral set, i.e., Fuglede’s conjecture holds in the direction ``Tile \(\Rightarrow\) Spectral."  
	\item A finite abelian group \(G\) is called an {\bf S–T group} if every spectral set in \(G\) is a tile, i.e., Fuglede’s conjecture holds in the direction ``Spectral \(\Rightarrow\) Tile."  
\end{itemize}

With this terminology, many previously studied groups turned out to be both T–S and S–T groups. On the other hand, there are examples (see \cite{AABF17, FS20}) showing that \(\Z_{p}^{4}\) with \(p\geq 3\) and \(\Z_{2}^{10}\) fail to be S–T groups, as they admit spectral sets that are not tiles.

Our main result highlights a fundamentally new phenomenon:  
{\bf \(\mathbb{Z}_3^4\) is a T–S group but not an S–T group. }This is, to the best of our knowledge, the first finite abelian group for which Fuglede’s conjecture holds in exactly one direction. In all previously known cases, the conjecture was either validated in both directions or refuted in some direction. Hence, the case of \(\mathbb{Z}_3^4\) provides the earliest concrete evidence of a genuinely one-sided validity of Fuglede’s conjecture, making it particularly remarkable. 
As far as we know, {\bf  no example of a finite abelian group that is S–T but not T–S has been discovered. }   

\begin{figure}[h]
\centering
\begin{tikzpicture}[scale=1.5]

\filldraw[fill=blue!30, draw=black, opacity=0.6] (0,0) circle (2cm);
\filldraw[fill=green!30, draw=black, opacity=0.6] (2.8,0) circle (2cm);

\node[align=center] at (-1.2,0) {\textbf{T--S only}\\(e.g.\ $\mathbb{Z}_3^4$)};

\node[align=center] at (4.0,0) {\textbf{S--T only}\\(no known example)};

\node[align=center] at (1.4,0) {\textbf{Both}\\T--S \& S--T\\(many known groups)};


\end{tikzpicture}
\caption{Venn diagram of T--S and S--T groups}
\end{figure}


It was conjectured in \cite{S19} that the implication ``Tile $\Longrightarrow$ Spectral" holds for all $p$-groups. For elementary  $p$-groups, one possible approach is to analyze the periodic structure of tiles. 
As a corollary of Theorem~\ref{cor-finitgroup}, we have:
\begin{corollary}
Let     $\Omega$  be  a tile in  the   elementary $p$-group $\mathbb{Z}_{p}^n$.
\begin{enumerate}[{\rm (1)}]
\item Case  $p\geq 3$: If $\mathbb{Z}_{p}^n$ has the PT property, then  $\Omega$ is a spectral set 
\item Case $p=2$: If  $\mathbb{Z}_{2}^n$  and  $\mathbb{Z}_{2}^{n-1}$ have the PT property, then  $\Omega$ is a spectral set. 
\end{enumerate}
\end{corollary}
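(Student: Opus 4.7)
The plan is to deduce the corollary from Theorem~\ref{cor-finitgroup} together with the standard fact that subgroup tilings give rise to spectral pairs via annihilators.

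First, under either hypothesis (case $p\geq 3$ or case $p=2$), Theorem~\ref{cor-finitgroup} supplies a subgroup $T\leq \mathbb{Z}_p^n$ such that $(\Omega,T)$ is a tiling pair; equivalently, $\Omega$ is a complete system of coset representatives for $T$ in $\mathbb{Z}_p^n$. Identifying the dual group $\widehat{\mathbb{Z}_p^n}$ with $\mathbb{Z}_p^n$, set
\[
\Lambda = T^{\perp} = \{\chi\in \widehat{\mathbb{Z}_p^n}: \chi(t)=1 \text{ for all } t\in T\}.
\]
Then $|\Lambda|=|\mathbb{Z}_p^n|/|T|=|\Omega|$, so $\Lambda$ has the right cardinality to be a spectrum.

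Next I would verify that $(\Omega,\Lambda)$ is a spectral pair, i.e.\ that the exponentials indexed by $\Lambda$ are pairwise orthogonal on $\Omega$. Given $\chi_1,\chi_2\in\Lambda$, the character $\chi=\chi_1\overline{\chi_2}$ lies in $\Lambda$, so it is trivial on $T$ and hence descends to a character $\tilde\chi$ on the quotient $\mathbb{Z}_p^n/T$. Because $\Omega$ is a transversal of this quotient,
\[
\sum_{\omega\in\Omega}\chi_1(\omega)\overline{\chi_2(\omega)} =\sum_{\omega\in\Omega}\chi(\omega) = \sum_{g\in \mathbb{Z}_p^n/T}\tilde\chi(g),
\]
which equals $|\Omega|$ when $\chi_1=\chi_2$ and vanishes otherwise by character orthogonality on the finite group $\mathbb{Z}_p^n/T$. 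Combined with $|\Lambda|=|\Omega|=\dim L^2(\Omega)$, this shows $\Lambda$ is an orthogonal basis of exponentials, so $\Omega$ is spectral.

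There is essentially no obstacle: the hard structural work was done in Theorem~\ref{cor-finitgroup}, which reduces the assertion to the well-known fact that subgroup factorizations yield spectral pairs through the annihilator. The only thing to be mindful of is that the hypotheses needed to invoke Theorem~\ref{cor-finitgroup} differ between the $p\geq 3$ and $p=2$ cases, so both cases of the corollary should be stated and proved in parallel, each simply appealing to the corresponding case of the theorem before running the annihilator argument above.
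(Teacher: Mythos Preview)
Your proposal is correct and is exactly the argument the paper has in mind: the corollary is stated without proof immediately after Theorem~\ref{cor-finitgroup}, with the understanding that a subgroup tiling complement $T$ yields a spectrum via its annihilator $T^{\perp}$. You have simply spelled out this standard step in full.
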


This paper is organized as follows.
Section~\ref{sec-pre} reviews the basic notions of the Fourier transform and translational tilings in finite abelian groups.
In Section~\ref{sec-ptag}, we investigate the PT property in relation to subgroups of a given group.
Section~\ref{sectionUPTP} introduces the UPT property and presents the proof of Theorem~\ref{thm:UPTPEX}.
In Section~\ref{sec-UPTP-group}, we examine the UPT property for several specific classes of groups.
Section~\ref{sec-proofs} contains the proofs of Theorems~\ref{thm-cyc}, \ref{thm-noncyc} and  \ref{thm-nonQG}.
In Section~\ref{sec-Red-PT}, we explore the relationship between the Rédei property and the PT property, and prove Theorems~\ref{thm:redei} and \ref{thm:84442}.
Section~\ref{sec-structure} analyzes the structure of tiles in PT groups, providing the proofs of Theorems~\ref{theorem-structure} and \ref{cor-finitgroup}.
Section~\ref{sec-PT--TS} is devoted to the proof of Theorem~\ref{PT--TS}.
Finally, Section~\ref{sec-con} summarizes and concludes the paper.

\section{Preliminaries}\label{sec-pre}
In this section, we briefly review some basic definitions, the Fourier transform, and the equivalent characterization of tiling on finite abelian groups.

Let $G$ be a finite abelian group, and let $\mathbb{C}$ be the set of complex numbers. A character on $G$ is a group homomorphism $\chi: G \to \mathbb{C}\setminus\{0\}$. The dual group of a finite abelian group $G$, denoted as $\widehat{G}$, is the character group of $G$. For  a  subset $A\subset G$, define 
\[\chi(A):=\sum_{x\in A}\chi (x). \]
 

Any finite abelian group $G$ can be written as  $\Z_{n_1} \times \Z_{n_2} \times \cdots \times \Z_{n_s}$.  For $g=(g_1,g_1, \dots, g_s)\in G$, denote by $\chi_g$ the character 
\[
\chi_{g}(x_1, \dots, x_s) = e^{2\pi i \sum_{j=1}^{s} \frac{x_jg_j}{n_j}}.
\]
For $g,h\in G$, it is clear that 
\[\chi_{g+h}(x)=\chi_{g}(x)\cdot \chi_{h}(x).\]
The dual group  $\widehat{G}$ is isomorphic to itself, i.e.
\[\widehat{G}\cong\widehat{\Z}_{n_1} \times \widehat{\Z}_{n_2} \times \cdots \times \widehat{\Z}_{n_k}\cong\Z_{n_1} \times \Z_{n_2} \times \cdots \times \Z_{n_s}.\]

For  two  finite abelian groups $G_1,G_2$, let $G$ be their  product $G_1\times G_2$. It is known that 
$\widehat{G}\cong\widehat{G_1}\times \widehat{G_2}\cong G_1\times G_2$, and each character in $\widehat{G}$ can be written as \[\chi_{(g_1,g_2)}(x_1,x_2)=\chi_{g_1}(x_1)\chi_{g_2}(x_2),\] 
where $g_1\in G_1$ and $g_2\in G_2$.

\subsection{Fourier Transform}
The Fourier transform on $G$ is a linear transformation that maps a function $f: G \rightarrow \mathbb{C}$  to a function $\widehat{f}: \widehat{G} \rightarrow \mathbb{C}$ defined as follows:
\[\widehat{f}(g) = \sum_{x \in G} f(x) \cdot \chi_g(-x)\]
where $\chi_g$ is the character of $G$  corresponding to $g$, and $f(x)$ is the value of the function $f$ at the element $x$ in $G$.

For $A \subset G$, denote by  
\[\mathcal{Z}_{A}:=\{x\in \widehat{G}: \widehat{\mathbbm{1}_{A}}(x)=0\}\]
the set of zeros  of the Fourier transform of the indicator function $\mathbbm{1}_A$. 
The set $\mathcal{Z}_{A}$, determined by the vanishing of certain sums of roots of unity, reflects structural properties of 
$A$.
The following lemma, due to R\'edei \cite{Redei54}, will be useful.
\begin{lemma}\label{lemma-pr}
	Let $p$ be a prime and $\zeta=\zeta_{p^n}$ be a primitive $p^n$-th root of unity. Suppose $c=c_{p^n-1}\zeta^{p^n-1}+c_{p^n-2}\zeta^{p^n-2}+\cdots+c_1\zeta+c_0$, where $c_i\in\mathbb{Z}$, $0\le i\le p^n-1$. Then $c=0$ if and only if $c_i=c_j$ for any $i,j$ with $i\equiv j\pmod{p^{n-1}}$.
\end{lemma}

 As consequences of Lemma \ref{lemma-pr}, the following lemmas provide a sufficient condition for a set in $G= \Z_{p^n}$ or  $\Z_{p^n} \times H$ to be periodic.
 \begin{lemma}\label{lem:periodic-1}
 Let $A\subset \Z_{p^n}$. If $ 1 \in \mathcal{Z}_A$, then $A+p^{n-1}=A$.
 \end{lemma}
\begin{proof}
Since $ 1 \in \mathcal{Z}_A$, then 
\[\sum_{x\in A} e^{2\pi i \frac{x}{p^n}}=0.\]
By Lemma \ref{lemma-pr}, if $x\in A$, then $x+jp^{n-1}\in A$ for any $j\in \{0, 1, \cdots, n-1\}.$ Hence \[A+p^{n-1}=A.\]
\end{proof}

 \begin{lemma}\label{lem:periodic}
 Let $H$ be a finite abelian group  and let  $A\subset G=\Z_{p^n}\times H$.  If $(1, h) \in \mathcal{Z}_A$ for each
$h \in  H$, then $A+(p^{n-1},0)=A$.
 \end{lemma}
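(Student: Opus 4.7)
The plan is to reduce this to the preceding one-dimensional lemma by slicing $A$ along the $H$-coordinate. For each $y\in H$, define the slice
\[
A_y := \{x\in \Z_{p^n} : (x,y)\in A\}\subset \Z_{p^n}.
\]
Then $A+(p^{n-1},0)=A$ is equivalent to $A_y+p^{n-1}=A_y$ for every $y\in H$, which by the previous lemma would follow if we could establish $1\in \mathcal{Z}_{A_y}$ for every $y\in H$ separately. So the goal is to upgrade the hypothesis (which gives one equation per $h\in H$, all mixing the slices together) into one equation per slice.

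The key step is to expand the hypothesis using the product structure of characters on $G=\Z_{p^n}\times H$. For each $h\in H$,
\[
\widehat{\mathbbm{1}_A}(1,h)=\sum_{(x,y)\in A} e^{2\pi i x/p^n}\chi_h(y)=\sum_{y\in H}\chi_h(y)\,f(y), \qquad f(y):=\sum_{x\in A_y} e^{2\pi i x/p^n}=\widehat{\mathbbm{1}_{A_y}}(1).
\]
Thus the assumption that $(1,h)\in\mathcal{Z}_A$ for every $h\in H$ says exactly that the Fourier transform of $f$ on $H$ vanishes identically. By Fourier inversion on the finite abelian group $H$ (equivalently, non-degeneracy of the character table), $f\equiv 0$, i.e.\ $\widehat{\mathbbm{1}_{A_y}}(1)=0$ for each $y\in H$.

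Now apply the preceding lemma (the $\Z_{p^n}$ case) to each slice: since $1\in \mathcal{Z}_{A_y}$, we obtain $A_y+p^{n-1}=A_y$ for every $y\in H$. Translating back, $(x,y)\in A$ if and only if $(x+p^{n-1},y)\in A$, so $A+(p^{n-1},0)\subset A$, and equality follows by cardinality. I do not foresee a real obstacle here — the argument is essentially a Fourier decoupling that reduces a two-variable vanishing condition to many one-variable ones, after which the previous lemma does all the work.
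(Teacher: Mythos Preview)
Your argument is correct and essentially identical to the paper's: both slice $A$ along the $H$-coordinate, rewrite the hypothesis as the vanishing of the $H$-Fourier transform of the function $y\mapsto \widehat{\mathbbm{1}_{A_y}}(1)$, invoke invertibility of the character matrix (you call it Fourier inversion, the paper calls it full rank of $M_H$) to deduce $\widehat{\mathbbm{1}_{A_y}}(1)=0$ for every $y$, and then apply the preceding one-dimensional lemma slice by slice.
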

 \begin{proof}
Let $A_h=\{x\in \Z_{p^n}: (x,h)\in A\}$. Then we have \[A=\bigcup_{h\in H}A_h\times\{h\}.\]
Since for each $\alpha\in H$,  $(1,\alpha)\in \mathcal{Z}_A$, it follows that 
\begin{align} \label{eq:zero1}
\sum_{(x,h)\in A}e^{2\pi i \frac{x}{p^n}} \chi_{\alpha}(h)=\sum_{h\in H}\chi_{\alpha}(h) \sum_{x\in A_h} e^{2\pi  i\frac{x}{p^n}}=0.
\end{align}
Let $X_h= \sum_{x\in A_h} e^{2\pi i \frac{x}{p^n}}$ and   let $\mathbf{X}=(X_h)_{h\in H}$ be the row vector with elements $X_h$.
Let   $M_H=(\chi_{\alpha}(h))_{\alpha,h \in H}$  be the Fourier matrix  of $H$. By Equality \eqref{eq:zero1}, we obtain the following system of linear equations

\[
M_H \cdot  \mathbf{X}
=
\begin{pmatrix}
0 \\
0 \\
\vdots \\
0
\end{pmatrix}.
\]
The coefficient matrix $M_H$ is of full rank, therefore $X_h=0$ for each $ h\in H$, which implies that 
\[A_h+p^{n-1}=A_h.\]
Therefore $A+(p^{n-1},0)=A$.
 \end{proof}

\subsection{Equivalent characterization for tiling pairs} \label{sect-2.1}

   Assume that $(\Omega,T)$ is a tiling pair of $G$.  Recall that this means  that \[G=\Omega+T\] forms a factorization, which is  equivalent to
\begin{equation}\label{tiling}
\mathbbm{1}_{\Omega}*\mathbbm{1}_{T}\equiv 1,
\end{equation}
where $\mathbbm{1}_E$ is the indicator
function of $E$. For a finite set $E$, denote by $|E|$ the cardinality of $E$.
By taking Fourier transform, (\ref{tiling}) is equivalent to
\begin{equation}\label{Fouriertiling}
\widehat{\mathbbm{1}_{\Omega}} \cdot \widehat{\mathbbm{1}_{T}}=|G|\cdot \delta_{0}.
\end{equation}
We have the following equivalent conditions for a tiling pair (see \cite{S20}, \cite[Lemma 2.1]{SS09book}).
\begin{lemma}\label{lem3p2}
	Let $\Omega,T$ be subsets in  a  finite subgroup $G$. Then the following statements are equivalent:
	\begin{enumerate}[{\rm(a)}]
		\item $(\Omega,T)$ is a tiling pair.
		\item $(T,\Omega)$ is a tiling pair.
		\item $(\Omega+g,T+h)$ is a tiling pair.
		\item $|\Omega|\cdot|T|=|G|$ and $(\Omega-\Omega)\cap(T-T)=\{0\}$.
		\item $|\Omega|\cdot|T|=|G|$ and $\mathcal{Z}_{\Omega}\cup\mathcal{Z}_{T}=G\backslash\{0\}$.
	\end{enumerate}
\end{lemma}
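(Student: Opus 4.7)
The plan is to prove the five conditions equivalent by chaining short pairwise implications, each relying on either the convolution identity \eqref{tiling} or its Fourier counterpart \eqref{Fouriertiling}. Conditions (a)--(c) are essentially formal translations of one another, while (d) is the combinatorial repackaging of the tiling identity and (e) is its Fourier repackaging.

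The equivalences (a)$\Leftrightarrow$(b) and (a)$\Leftrightarrow$(c) I would dispatch first. Since $G$ is abelian, $\mathbbm{1}_\Omega*\mathbbm{1}_T=\mathbbm{1}_T*\mathbbm{1}_\Omega$, which instantly gives (a)$\Leftrightarrow$(b). For (a)$\Leftrightarrow$(c), translating $\Omega$ by $g$ and $T$ by $h$ simply shifts the convolution by $g+h$, and because the constant function $1$ is translation invariant, $\mathbbm{1}_{\Omega+g}*\mathbbm{1}_{T+h}\equiv 1$ holds if and only if $\mathbbm{1}_\Omega*\mathbbm{1}_T\equiv 1$.

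Next I would establish (a)$\Leftrightarrow$(d) combinatorially. If $(\Omega,T)$ tiles $G$, every $g\in G$ is uniquely of the form $\omega+t$, so $|\Omega|\cdot|T|=|G|$ by counting; and if some nonzero $d=\omega_1-\omega_2=t_2-t_1$ lay in $(\Omega-\Omega)\cap(T-T)$, the equality $\omega_1+t_1=\omega_2+t_2$ would contradict uniqueness. Conversely, the intersection condition is precisely the injectivity of the addition map $\Omega\times T\to G$, $(\omega,t)\mapsto\omega+t$, and the cardinality condition upgrades this injection between finite sets of equal size to a bijection, yielding the tiling.

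Finally, for (a)$\Leftrightarrow$(e) I would pass through the Fourier side via \eqref{Fouriertiling}: the tiling identity is equivalent to $\widehat{\mathbbm{1}_\Omega}(g)\cdot\widehat{\mathbbm{1}_T}(g)=|G|\cdot\delta_0(g)$ for every $g\in\widehat{G}$. Evaluating at $g=0$ produces $|\Omega|\cdot|T|=|G|$, while for $g\neq 0$ the product vanishes, which is exactly $g\in\mathcal{Z}_\Omega\cup\mathcal{Z}_T$; conversely these two pieces of information reassemble \eqref{Fouriertiling}. No step is a real obstacle; the arguments are standard. The only point requiring a little care is that in both (d) and (e) the cardinality hypothesis is genuinely needed, since the disjointness condition $(\Omega-\Omega)\cap(T-T)=\{0\}$, respectively the zero-set condition $\mathcal{Z}_\Omega\cup\mathcal{Z}_T=G\setminus\{0\}$, only forces injectivity (resp. the vanishing of the Fourier product off $0$), and one needs the size constraint to conclude surjectivity (resp. the correct value at $0$).
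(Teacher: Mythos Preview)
Your argument is correct and complete; each implication is handled by the standard mechanism (commutativity of convolution, translation invariance, the counting/injectivity argument for (d), and the Fourier identity \eqref{Fouriertiling} for (e)), and you rightly flag that the cardinality hypothesis in (d) and (e) is what upgrades injectivity/vanishing to the full tiling identity.

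There is nothing to compare against: the paper does not prove this lemma but simply records it with citations to \cite{S20} and \cite[Lemma~2.1]{SS09book}. Your proof is the standard one and is essentially what one finds in those references.
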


Recall that $\mathcal{T}_\Omega$ is  the set of tiling complements of $\Omega$.
For a subset $A\subset G$ and integer $k$, let $kA=\{k\cdot a: a\in A\}$.  The following  lemma states that a tiling complement $T$  can be replaced by $kT$ if $k$ and $|T|$ are relatively prime.
\begin{lemma}\label{lemma-replace}$($\cite[Theorem 3.17]{SS09book}$)$
Assume that   $(\Omega,T)$  is a tiling pair of  a finite abelian group $G$. 
If  $(k, |T|)$=1, then $kT\in \mathcal{T}_\Omega$.  
\end{lemma}

\begin{corollary} \label{cor-mtimes}
Let $G$ be a finite abelian group  and  let  $m$ be an integer. If   $(m,|G|)=1$, then $\Omega$ tiles $G$ by translation if and only if $m \Omega$ tiles $G$ by translation. Moreover, $\mathcal{T}_\Omega=\mathcal{T}_{m\Omega}.$
\end{corollary}

\section{Periodic tiling in abelian groups}\label{sec-ptag}
It is known that the subgroups of a group with  the Haj\'os property also have the Haj\'os property. 
This naturally leads to the following question.
\begin{question}
If a finite abelian  group has the PT property, do all its subgroups have the PT property?
\end{question}
We prove that  this is true under the additional condition that the groups are not $2$-group.
\begin{theorem}\label{theorem-subgroup1}
Let $G$ be a finite abelian group that is not a $2$-group. If $G$ has the PT property,  then all its subgroups  have the PT property.
\end{theorem}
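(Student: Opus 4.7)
The plan is a lifting-and-restriction argument. Given a subgroup $H \leq G$ and a tiling pair $(\Omega,T)$ of $H$, I would first extend it to a tiling pair of $G$ by choosing a set $R$ of representatives of the cosets of $H$ in $G$. Every $g \in G$ then admits a unique decomposition $g = h + r$ with $h \in H$ and $r \in R$, and writing $h = \omega + t$ uniquely from the tiling of $H$ shows that $(\Omega, T+R)$ is a tiling pair of $G$, since each $g$ decomposes uniquely as $g = \omega + t + r$.

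Applying the PT property of $G$ to the tile $\Omega \subset G$ yields two cases. In the first case, $\Omega + g = \Omega$ for some $g \in G \setminus \{0\}$; since $\Omega \subset H$, the inclusion $\Omega + g \subset H$ combined with $\omega \in \Omega \subset H$ forces $g \in H$, so $\Omega$ is already periodic in $H$ and the proof is complete. In the second case, there exists a periodic subset $S \subset G$ such that $(\Omega, S)$ tiles $G$, with stabilizer $K := \{g \in G : S + g = S\}$ a nontrivial subgroup.

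The restriction step is as follows: for each coset representative $g_0 \in R$, set $S_{g_0} := S \cap (g_0 + H)$. Using the unique $G$-decomposition $x + g_0 = \omega + s$ for $x \in H$, one verifies directly that $\omega \in H$ forces $s \in g_0 + H$, so $(\Omega, S_{g_0} - g_0)$ is a tiling pair of $H$. If the good intersection condition $K \cap H \neq \{0\}$ holds, take a nonzero $k \in K \cap H$; then $S + k = S$ together with $k \in H$ gives $S_{g_0} + k = S_{g_0}$, so $S_{g_0} - g_0$ is a periodic tiling complement of $\Omega$ in $H$, completing the proof.

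The main obstacle is the residual case $K \cap H = \{0\}$, in which the period of $S$ lies strictly outside $H$ and naive restriction to any $H$-coset yields only a non-periodic tiling complement of $\Omega$ in $H$. This is where the hypothesis that $G$ is not a $2$-group should enter, via an odd prime factor $p$ of $|G|$. The approach I would pursue is to exploit Lemma \ref{lemma-replace} together with the Sylow decomposition $G = \prod_p G_p$: by varying the lift (using different coset systems $R$) and replacing $S$ by translates, by $mS$ for $m$ coprime to $|S|$, or by combining restrictions across different $H$-cosets, one should be able to produce an alternative periodic tiling complement $S'$ of $\Omega$ whose stabilizer meets $H$ nontrivially, reducing to the solved case. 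The essential point is that the odd prime provides additional automorphisms and direct-factor structure of $G$ that are not available in pure $2$-groups, and pinning down the precise mechanism by which the hypothesis forces $\mathrm{Stab}(S') \cap H \neq \{0\}$ will be the technical heart of the proof.
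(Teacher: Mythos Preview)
Your lifting and the two easy cases ($\Omega$ periodic in $G$; stabilizer of $S$ meets $H$) are fine, but the residual case $K \cap H = \{0\}$ is a genuine gap and your proposed fixes do not work. Varying $R$ is irrelevant: the periodic $S$ produced by the PT property depends only on $\Omega$ as a subset of $G$, not on which complement $T+R$ you started from. Replacing $S$ by $mS$ sends a period $k$ to $mk$, which has no reason to land in $H$ when $k$ does not. And the restrictions $S_{g_0}-g_0$ are each tiling complements of $\Omega$ in $H$, but none need be periodic and there is no mechanism to assemble a periodic one from them.

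The paper's approach avoids this obstacle by not keeping $\Omega$ fixed. Working contrapositively, it lifts $\Omega$ to $\widetilde{\Omega} = \Omega + K$ where $K$ is a deliberately \emph{non-periodic} set of coset representatives (for $G = H \times \Z_p$ with $p$ odd, one takes $K = \{(0,0)\} \cup \{(h_0,j): j \neq 0\}$ for some nonzero $h_0 \in H$). This makes $\widetilde{\Omega}$ non-periodic, so PT of $G$ yields a periodic $\widetilde{T}$; the specific structure of $K$ then forces the projection of $\widetilde{T}$ back to $H$ to be a \emph{periodic} complement of $\Omega$. The hypothesis that $G$ is not a $2$-group enters exactly here: when $p$ is odd one needs $|\Z_p| \geq 3$ to pick a third residue class in the projection argument, and when $p=2$ one uses an element of odd order in $H$ to build $K$ and invoke Lemma~\ref{lemma-replace}. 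The idea you are missing is this deformation of the tile itself; by leaving $\Omega$ untouched you give the periodic complement $S$ too much freedom, and that freedom is precisely what strands you in the residual case.
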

For $2$-groups, it is unclear how the periodic tiling property of $(\mathbb{Z}_2)^{n+1}$ implies that of $(\mathbb{Z}_2)^n$. However,  we obtain a partial result.  
\begin{theorem}\label{theorem-subgroup2}
 Let  $n\ge2$ and $H$ be a $2$-group.
\begin{enumerate}[{\rm(1)}]
 \item If $H\times\mathbb{Z}_{2^n}$ has the PT property, then $H$ and  $H\times\mathbb{Z}_{2^{n-1}}$  has the PT property.
 \item If  $H\times\mathbb{Z}_{2}^n$ has the PT property, then  $H$  has the PT property.
 \end{enumerate}
 \end{theorem}
Note that every subgroup of $\mathbb{Z}_2^5$ possesses the Haj\'os property, and therefore also the PT property.  As a consequence of  Theorems \ref{theorem-subgroup1} and  \ref{theorem-subgroup2}, we have the following corollary. 
\begin{corollary} 
 Let $G$ be a  finite abelian group having the  PT property.  If  the rank of $G$ (the smallest cardinality of a generating set) is at most $5$, then all its  subgroups  have the PT property.
\end{corollary}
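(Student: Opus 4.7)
The plan is to split on whether $G$ is a 2-group. If $G$ is not a 2-group, Theorem~\ref{theorem-subgroup1} immediately implies that every subgroup of $G$ has the PT property, and we are done.

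Now suppose $G$ is a 2-group of rank $k \leq 5$. I would write the invariant factor decomposition $G = \mathbb{Z}_{2^{a_1}} \times \cdots \times \mathbb{Z}_{2^{a_k}}$ with $a_1 \geq \cdots \geq a_k \geq 1$, and fix a subgroup $H \leq G$ of rank $r \leq k$ with $H \cong \mathbb{Z}_{2^{b_1}} \times \cdots \times \mathbb{Z}_{2^{b_r}}$ and $b_j \leq a_j$ for $j \leq r$. The main reduction is to iterate Theorem~\ref{theorem-subgroup2}(1) on $G$: successively decrement each $a_i$ down to $b_i$ for $i \leq r$, and down to $1$ for $i > r$. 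Each individual step is legal since it only requires the exponent being decremented to be at least $2$. The conclusion is that $H \times \mathbb{Z}_2^{k-r}$ has the PT property. If $k = r$ the proof is complete; if $k - r \geq 2$, Theorem~\ref{theorem-subgroup2}(2) with $n = k - r$ applied to $H \times \mathbb{Z}_2^{k-r}$ delivers that $H$ has the PT property.

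The main obstacle will be the remaining case $k - r = 1$. One has that $H \times \mathbb{Z}_2$ has the PT property, but Theorem~\ref{theorem-subgroup2}(2) cannot be invoked directly since it requires $n \geq 2$. To handle this, I would proceed by strong induction on $|G|$: whenever some $a_i \geq 2$ still admits a reduction via Theorem~\ref{theorem-subgroup2}(1) that yields a strictly smaller PT group still containing $H$, the inductive hypothesis concludes. The only genuine obstruction is the corner configuration $G = H \times \mathbb{Z}_2$ forced by $b_j = a_j$ for all $j \leq r$ and $a_k = 1$, in which $H$ has rank $r \leq 4$. Here the rank-$\leq 5$ hypothesis is used decisively through the preceding remark that $\mathbb{Z}_2^n$ is Haj\'os (and hence has PT) for all $n \leq 5$, combined with a finite case analysis on the possible 2-groups $H$ of rank $\leq 4$ drawing on Sands' classification together with the new PT groups provided by Theorem~\ref{thm-noncyc}.
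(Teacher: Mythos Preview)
Your overall scaffold matches the paper's intent: split on whether $G$ is a $2$-group, invoke Theorem~\ref{theorem-subgroup1} in the non-$2$-group case, and in the $2$-group case iterate Theorem~\ref{theorem-subgroup2}(1) to pass from $G=\prod_{i=1}^k\mathbb{Z}_{2^{a_i}}$ down to $H\times\mathbb{Z}_2^{\,k-r}$, then apply Theorem~\ref{theorem-subgroup2}(2) when $k-r\ge 2$. You also correctly isolate the single residual obstruction, namely the corner case $G\cong H\times\mathbb{Z}_2$ with $r=k-1$.

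The gap is in your proposed resolution of that corner case. A ``finite case analysis on the possible $2$-groups $H$ of rank $\le 4$'' is not available: there are infinitely many such $H$, and neither Sands' Haj\'os list nor Theorem~\ref{thm-noncyc} covers them. For instance, $H=\mathbb{Z}_8\times\mathbb{Z}_4$ has rank $2$ but is not a Haj\'os group and does not appear among the groups of Theorem~\ref{thm-noncyc}; if $G=H\times\mathbb{Z}_2$ were assumed to have the PT property, nothing you cite lets you conclude that $H$ does. The remark that $\mathbb{Z}_2^n$ is Haj\'os for $n\le 5$ only handles the very special subcase $H\cong\mathbb{Z}_2^{k-1}$, not an arbitrary $H$ of rank $k-1$. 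What is actually needed here is precisely the $n=1$ instance of Theorem~\ref{theorem-subgroup2}(2), i.e.\ the implication ``$H\times\mathbb{Z}_2$ has PT $\Rightarrow$ $H$ has PT'' for $2$-groups $H$, and that is exactly the case excluded by the hypothesis $n\ge 2$ in Lemma~\ref{Thm-3.8}. The paper's own justification for the corollary is only the one-line remark preceding it, so it does not supply an alternative route around this point either; but as your proposal stands, the $k-r=1$ case remains open.
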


\subsection{Construction of tiles from  the tiles in subgroups}
To prove Theorems \ref{theorem-subgroup1} and \ref{theorem-subgroup2}, the first step is to construct  non-periodic tiles  based on tiles in the subgroups.

\begin{lemma}\label{prop:tile}
 Let $G = H \times S$ be the direct product of two finite abelian groups, $H$ and $S$. Suppose $(\Omega, T)$ is a tiling pair for the group  $H$ and  identify $H$ with the subset $H \times \{0\} \subset G$. For any subset $K \subset G$ of the form \[K=\{(h_s, s): s \in S, h_s \in H\}, \] 
the pair $(\Omega+K, T)$ forms a tiling pair of $G$.
Furthermore, if neither $\Omega$ nor $K$ is periodic, then $\Omega+K$ is also non-periodic.
\end{lemma}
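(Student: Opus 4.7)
The plan is to analyze $\Omega + K$ fiber by fiber over the $S$-coordinate. Since $K$ contains exactly one element with each second coordinate $s \in S$, the fiber of $\Omega + K$ over $s$ is $(\Omega + h_s) \times \{s\}$. As $T$ lies in $H \times \{0\}$, the fiber of $(\Omega + K) + T$ over $s$ equals $(\Omega + h_s + T) \times \{s\} = H \times \{s\}$, where the last equality uses that $(\Omega + h_s, T)$ still tiles $H$ by Lemma \ref{lem3p2}(c). Taking the union over $s$ yields $G = (\Omega + K) + T$, and each element of $G$ appears uniquely because distinct $s$ give disjoint fibers and within each fiber the tiling of $H$ is unique. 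Thus $(\Omega + K, T)$ is a tiling pair.

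For the non-periodicity assertion, I would suppose for contradiction that $(a, b) \in G \setminus \{(0, 0)\}$ is a period of $\Omega + K$. Comparing the fibers over $s + b$ on both sides of $(\Omega + K) + (a, b) = \Omega + K$ gives the identity
\[
\Omega + h_s + a = \Omega + h_{s+b} \quad \text{for every } s \in S.
\]
Because $\Omega$ is non-periodic, its stabilizer $\{g \in H : \Omega + g = \Omega\}$ is trivial, so $\Omega + x = \Omega + y$ forces $x = y$. The displayed identity therefore simplifies to
\[
h_{s+b} - h_s = a \quad \text{for every } s \in S.
\]

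Splitting into cases finishes the argument. If $b = 0$, the relation forces $a = 0$, contradicting $(a, b) \neq (0, 0)$. If $b \neq 0$, then by reindexing $s' = s + b$, the relation is precisely the statement that $K + (a, b) = K$, so $K$ is periodic with nonzero period $(a, b)$, contradicting the hypothesis. The only technical point is the reduction $\Omega + x = \Omega + y \Rightarrow x = y$ via triviality of the stabilizer; after that, the passage from a period of $\Omega + K$ to a period of either $\Omega$ or $K$ is mechanical, and I do not anticipate any serious obstacle.
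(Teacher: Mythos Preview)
Your proof is correct and follows essentially the same approach as the paper's: both establish the tiling by showing that each $S$-fiber of $(\Omega+K)+T$ equals $H\times\{s\}$, and both handle non-periodicity by assuming a period $(a,b)$, comparing fibers to obtain $\Omega+h_s+a=\Omega+h_{s+b}$, invoking the non-periodicity of $\Omega$ to deduce $h_{s+b}=h_s+a$, and then splitting on whether $b=0$ to contradict either $(a,b)\neq(0,0)$ or the non-periodicity of $K$. Your write-up is slightly more explicit about the stabilizer argument and the uniqueness in the tiling, but there is no substantive difference.
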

 \begin{proof}
 Represent elements in $G$ as the form $(h,s)$,  with $h\in H$ and $s \in S$.
  Remark that if $H$ is considered as a subgroup of $G$,  the elements in $H$ take the form $(h, 0)$.



Let \[\widetilde{\Omega} = \Omega + K.\]
 Note that $H \cap (H+(h_s,s)) = \emptyset$  if $s\neq 0$ and $\Omega+T+(h_s, s)=H+(0,s)$ for $(h_s,s)\in K$. Hence, $(\widetilde{\Omega},T)$ forms a  tiling pair of $G$.

Suppose that  both $\Omega$ and $K$ are not periodic. We shall prove by contradiction that $\widetilde{\Omega}$ is not periodic.

Assume that $\widetilde{\Omega} + (h_0, s_0) = \widetilde{\Omega}$ for some $(h_0, s_0) \in G\setminus\{0\}$. It follows that
\[\Omega+ K + (h_0, s_0) = \Omega+ K.\]
 If $s_0 = 0$, then $\Omega+ (h_0, 0) = \Omega$, implying $(h_0, s_0) = (0, 0)$.
Hence, $s_0 \neq 0$  and  we have the following relations:
\begin{align}\label{eq-h1}
\forall s \in S, \quad \Omega+ (h_s, s)+ (h_0, s_0) = \Omega+ (h_{s+s_0}, s+s_0).
\end{align}
As $\Omega$ is non-periodic, Equality~\eqref{eq-h1}  implies $h_{s+s_0}=h_s+h_0$. Hence,
\[K+(h_0,s_0)=\{(h_{s+s_0}, s+s_0): s \in S, h_s \in H\}=K,\]
which implies $K$ is periodic,   leading to a contradiction. Therefore, $\widetilde{\Omega}$ is non-periodic.
\end{proof}

\begin{lemma}\label{prop:2.1}
Consider a finite abelian group $S$, and let $H = S \times \Z_{p^n}$ and $G= S \times \Z_{p^{n+1}}$ where $n\geq 1$. Assume  $(\Omega, T)$ is a tiling pair of $H$, and identify $H$ with the subset $\{(s,pj)\in G:  s\in S, j\in \{0,1, \ldots, p^n-1 \}\}$ of $G$. For any $h\in H$, let $K \subset G$ be of the form 
\[K=\big\{(0, i)+h: i \in \{0,1, \ldots, p-1 \}\}.\] 
Then $ (\Omega + K,  T)$ forms a tiling pair of $G$. Moreover, if $\Omega$ is not periodic in $H$, then $\Omega + K$ is not periodic in $G$.
\end{lemma}

\begin{proof}
Let $\widetilde{\Omega} = \Omega + K.$ Note that $H \cap (H+(0,i)) = \emptyset$ for any $1\leq i\leq p-1$ and $\Omega + T + (0, i) + h = H + (0,i)$ for $(0,i)+h \in K$. Hence, we have $G = \widetilde{\Omega} + T$.

Assume that $\Omega$ is not periodic in $H$ and $\widetilde{\Omega} + (s,i) = \widetilde{\Omega}$ for some nonzero element $(s,i) \in G$. Write $i = i_0 + pi_1$, where $i_0 \in \{0,\ldots, p-1\}$.

If $i_0 = 0$, then $\Omega + (s, pi_1) = \Omega$. So, it follows that $(s, i) = (0, 0)$. Hence, $i_0 \neq 0$ and 
\begin{align}
&\Omega + (s, pi_1 + i_0) = \Omega + (0, i_0),\label{eq1}\\
&\Omega + (0, p-i_0) + (s, pi_1 + i_0) = \Omega.\label{eq2}
\end{align}
Equation~(\ref{eq1}) implies $i_1 = 0$ and Equation~(\ref{eq2}) implies $i_1 = -1$, which is a contradiction. Hence, $\widetilde{\Omega}$ is non-periodic.
\end{proof}

\subsection{Proof of Theorem \ref{theorem-subgroup1} }
Let $H$ be  a subgroup of $G$. Assume that $H$ does not have the PT property. 
For simplicity, we can focus on the case where $[G:H]=p$, where $p$ is a prime. This is because we can use induction on the index to prove the result for any finite index subgroup. Hence, Theorem  \ref{theorem-subgroup1}  is a consequence of the following  Lemmas \ref{lem:productextension}, \ref{thm:twoextension} and \ref{Thm-3.7}.

\subsubsection{Induction from $H$ to $H\times \Z_p$}

In this subsection, we study the behavior of the PT property under direct products.  
Our main goal is to show that if a finite group $H$ does not have the PT property, then the product $H\times S$ also fails to have the PT property for various finite abelian groups $S$.

We first prove a general result for groups $S$ with $|S|\ge 3$ (Lemma~\ref{lem:productextension}), which relies on a combinatorial fact (Lemma~\ref{lem:S-S}).  
As a direct consequence, the case $S=\Z_p$ with $p$ an odd prime follows immediately (Corollary~\ref{thm:primeextension}).  

The case $S=\Z_2$ is treated separately in Lemma~\ref{thm:twoextension}, since the previous combinatorial method does not apply. Here we assume that $H$ is not a $2$-group, and a different argument is required to establish the result.

Together, these lemmas show that the lack of the PT property is generally preserved under direct products with cyclic groups of prime order, with special care needed for the $2$-group case.

%

\begin{lemma}\label{lem:productextension}
Let $H$ be a finite group which does not have the PT property.  Then, for any finite  abelian group $S$ with  $|S|\geq 3$, the group $G=H\times S$ does not have the PT property.
\end{lemma}

\begin{lemma}\label{lem:S-S}
Let \(S\) be a finite abelian group with \(|S|\ge 3\). Then 
\[
(S\setminus\{0\})-(S\setminus\{0\})=S,
\]
\end{lemma}
\begin{proof}

Let \(S\) be a finite (additive) group with \(|S|\ge 3\). 
Choose two distinct  elements \(a,b\in S\setminus\{0\}\). 
We show that every \(s\in S\) can be written as \(s=x-y\) with \(x,y\in S\setminus\{0\}\).

For a given \(s\in S\), consider the two elements \(s+a\) and \(s+b\).
If \(s+a\neq0\), then set \(x=s+a\) and \(y=a\); clearly \(x,y\in S\setminus\{0\}\) and \(s=x-y\).
If \(s+a=0\), then \(s=-a\). Since \(a\neq b\), we have \(s+b\neq0\). Thus in this case set \(x=s+b\) and \(y=b\), giving \(s=x-y\) with \(x,y\in S\setminus\{0\}\).
Therefore every \(s\in S\) lies in \((S\setminus\{0\})-(S\setminus\{0\})\).
\end{proof}

\begin{proof}[Proof of Lemma  \ref{lem:productextension}]
Note that any element of $G$ can be represented as $(h,s)$, where $h\in H$ and $s\in S$.
Let $(\Omega, T)$ be a tiling pair of  $H$. Assume  $\Omega$ is not periodic and $T$ cannot be replaced by a periodic set. For $0\neq h_0\in H$, let \[K=\{(0,0)\}\cup \{(-h_0,s): s\in S\setminus \{0\} \}.\]
Note that $K$ is not periodic.
   Let  $\widetilde{\Omega}=(\Omega,0)+K.$ Then, by  Lemma \ref{prop:tile}, $\widetilde{\Omega}$ is not periodic.

Suppose  $G$ has the PT property.  Then there exists a periodic set $\widetilde{T}$ such that $G=\widetilde{\Omega}+\widetilde{T}$. For any  distinct $(h_1,s_1),(h_2,s_2)\in\widetilde{T}$, we have
  \[\big(\widetilde{\Omega}+(h_1,s_1)\big)\cap \big(\widetilde{\Omega}+(h_2,s_2)\big)=\emptyset.\]
  
  If $s_1=s_2$, then 
\[
(\Omega+h_1)\cap(\Omega+h_2)=\emptyset.
\]
If $s_1\ne s_2$, then by Lemma~\ref{lem:S-S} there exist $s_3,s_4\in S\setminus\{0\}$ such that 
\[
s_1+s_3=s_2+s_4.
\]
Hence,
\[
(\Omega-h_0+h_1,s_1+s_3)\cap(\Omega-h_0+h_2,s_2+s_4)=\emptyset,
\]
which implies
\[
(\Omega+h_1)\cap(\Omega+h_2)=\emptyset.
\]
Define
\[
T'=\{\,h : (h,s)\in\widetilde{T}\ \text{for some }s\in S\,\}.
\]
Then we obtain
\[
\Omega+T'=H.
\]

Note that $(0,s)$, $s\ne0$ cannot be a period of $\widetilde{T}$ since $s\in S\backslash\{0\}-S\backslash\{0\}$.
If $(h,s)$ is a periodic of $\widetilde{T}$, then $(h,0)$ is also a period of $T'$, which is a contradiction.
\end{proof}

\begin{corollary}\label{thm:primeextension}
Let $H$ be a finite group which does not have the PT property.  Then, for any odd prime $p$, the group $G=H\times\mathbb{Z}_p$ does not have the PT property.
\end{corollary}

\begin{lemma}\label{thm:twoextension}
Let $H$ be a finite abelian group that is not a $2$-group. Assume $H$ does not have the PT property. Then the group $G = H \times \mathbb{Z}_2$ does not have the PT property.
\end{lemma}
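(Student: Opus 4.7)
The plan is to mimic the construction in the proof of Lemma \ref{thm:primeextension} but, where that argument collapses for $p=2$ (the set $\mathbb{Z}_p\setminus\{0,j_2-j_1\}$ used there becomes empty), replace the combinatorial step by a Fourier argument on the zero sets. Concretely, I let $(\Omega,T)$ be a tiling pair of $H$ with $\Omega$ non-periodic and $T$ not replaceable by a periodic set, and use the hypothesis that $H$ is not a $2$-group to pick $h_1\in H$ of odd prime order $q\geq 3$. Setting $K=\{(0,0),(h_1,1)\}\subset G$ and $\widehat{\Omega}=\Omega+K$, Lemma \ref{prop:tile} gives that $(\widehat{\Omega},T)$ is a tiling pair of $G$ with $\widehat{\Omega}$ non-periodic (using that $2h_1\neq 0$ makes $K$ non-periodic). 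I assume for contradiction that some periodic $\widetilde{T}$ replaces $T$, and aim to produce a periodic tiling complement of $\Omega$ in $H$.

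Writing $\widetilde{T}_z=\{s\in H:(s,z)\in\widetilde{T}\}$ for $z\in\mathbb{Z}_2$, projecting $\widehat{\Omega}+\widetilde{T}=G$ onto $H\times\{0\}$ shows that $T_1:=\widetilde{T}_0\cup(\widetilde{T}_1+h_1)$ is a tiling complement of $\Omega$ in $H$. I split according to the non-trivial periods of $\widetilde{T}$: if $\widetilde{T}$ has a period $(g_0,0)$ with $g_0\neq 0$, then both $\widetilde{T}_0$ and $\widetilde{T}_1$ are $g_0$-invariant and hence so is $T_1$, already contradicting the irreplaceability of $T$. Otherwise every non-trivial period lies in $H\times\{1\}$; doubling a period $(g_0,1)$ yields $\widetilde{T}+(2g_0,0)=\widetilde{T}$ and forces $2g_0=0$. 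In the subcase $g_0=0$ we have $\widetilde{T}_0=\widetilde{T}_1$, every fiber of $T_1$ over a coset of $F:=\langle h_1\rangle$ (of odd size $q$) has even cardinality, and pushing the identity $\mathbbm{1}_\Omega\ast\mathbbm{1}_{T_1}=\mathbbm{1}_H$ down to $H/F$ then forces $q$ to be even -- a contradiction.

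The genuinely new case, and the main obstacle, is $g_0\in H[2]\setminus\{0\}$, which I handle by Fourier analysis. The period $(g_0,1)$ of $\widetilde{T}$ forces $\widehat{\mathbbm{1}_{\widetilde{T}}}$ to be supported on $(g_0^\perp\times\{0\})\cup((\widehat{H}\setminus g_0^\perp)\times\{1\})$, where $g_0^\perp:=\{g\in\widehat{H}:\chi_g(g_0)=1\}$. A direct computation gives $\widehat{\mathbbm{1}_{\widehat{\Omega}}}(g,0)=\widehat{\mathbbm{1}_\Omega}(g)\bigl(1+\chi_g(h_1)\bigr)$, and the odd order of $h_1$ makes $\chi_g(h_1)\neq -1$ for every $g$, so the zero condition at $(g,0)$ reduces to $g\in\mathcal{Z}_\Omega$. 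The tiling identity $\mathcal{Z}_{\widehat{\Omega}}\cup\mathcal{Z}_{\widetilde{T}}=G\setminus\{0\}$ then forces $g_0^\perp\setminus\{0\}\subseteq\mathcal{Z}_\Omega$, which makes the Fourier transform of $\mathbbm{1}_\Omega+\mathbbm{1}_{\Omega+g_0}$ concentrated at the origin with value $2|\Omega|$; equivalently, $\mathbbm{1}_\Omega+\mathbbm{1}_{\Omega+g_0}$ is the constant function $2|\Omega|/|H|$. Since this function is $\{0,1,2\}$-valued, the only non-trivial possibility is $|\Omega|=|H|/2$ with $\Omega\cup(\Omega+g_0)=H$ a disjoint union; that is, $\langle g_0\rangle$ is a periodic tiling complement of $\Omega$, the desired contradiction. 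The hypothesis that $H$ is not a $2$-group is indispensable here through the identity $\chi_g(h_1)\neq -1$ that lets the Fourier deduction pin down $\mathcal{Z}_\Omega$.
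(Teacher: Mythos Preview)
Your overall architecture matches the paper's: build $\widehat{\Omega}=\Omega+K$ with $K=\{(0,0),(h_1,1)\}$ for some $h_1$ of odd prime order, assume a periodic $\widetilde{T}$ exists, and manufacture a periodic complement of $\Omega$ in $H$. Your case~(a) and subcase~(b1) are correct and essentially coincide with the paper's Case~2 and the $h'=0$ part of Case~1.

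The genuine problem is subcase~(b2). From the period $(g_0,1)$ you correctly deduce that $\widehat{\mathbbm{1}_{\widetilde T}}$ is supported on $S:=(g_0^{\perp}\times\{0\})\cup((\widehat H\setminus g_0^{\perp})\times\{1\})$, and that $(g,0)\in\mathcal{Z}_{\widehat\Omega}\iff g\in\mathcal{Z}_\Omega$. But the next step, ``the tiling identity then forces $g_0^{\perp}\setminus\{0\}\subseteq\mathcal{Z}_\Omega$'', is a non sequitur: for $g\in g_0^{\perp}\setminus\{0\}$ the point $(g,0)$ lies \emph{inside} the support region $S$, so $\widehat{\mathbbm{1}_{\widetilde T}}(g,0)$ may well be nonzero or zero, and $\mathcal{Z}_{\widehat\Omega}\cup\mathcal{Z}_{\widetilde T}=G\setminus\{0\}$ gives no information forcing $(g,0)\in\mathcal{Z}_{\widehat\Omega}$. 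Concretely, if $\widetilde T=\widetilde T_0+\{(0,0),(g_0,1)\}$ then $\widehat{\mathbbm{1}_{\widetilde T}}(g,0)=2\,\widehat{\mathbbm{1}_{\widetilde T_0}}(g)$ on $g_0^{\perp}$, and there is no reason for $\widehat{\mathbbm{1}_{\widetilde T_0}}$ not to vanish at some such $g$. Note also that your conclusion in this subcase would force $|\Omega|=|H|/2$, a constraint on the \emph{given} tile $\Omega$ that is certainly not implied by the hypotheses; this already signals that the deduction cannot be right.

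The paper handles exactly this subcase (its Case~1 with $h'=g_0\neq 0$) by a different, elementary device. From $\widetilde T_1=\widetilde T_0+g_0$ one gets $T_1=\widetilde T_0+\{0,g_0+h_1\}$ and hence $H=\Omega+\widetilde T_0+\{0,g_0+h_1\}$. Now apply Lemma~\ref{lemma-replace} with $k$ equal to the odd prime order of $h_1$ to the two-element factor $\{0,g_0+h_1\}$: since $\gcd(k,2)=1$, one may replace it by $k\{0,g_0+h_1\}=\{0,kg_0+kh_1\}=\{0,g_0\}$ (using $2g_0=0$, $k$ odd, and $kh_1=0$). This yields $H=\Omega+\bigl(\widetilde T_0+\{0,g_0\}\bigr)$ with $\widetilde T_0+\{0,g_0\}$ periodic of period $g_0$, the desired contradiction. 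Your Fourier route can be repaired by inserting this replacement argument in subcase~(b2); the Fourier computation you set up is not needed there.
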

\begin{proof}
Any element of $G$ can be represented by $(h,i)$, where $h \in H$ and $i \in \mathbb{Z}_2$. All elements of $H$ have the form $(h,0)$.

Let  $(\Omega,T)$ be a tiling pair of $H$. Assume  $\Omega$ is not periodic and $T$ can not be replaced by a periodic set. Let $h_0$ be any element in $H$ with order an odd  $p$. 
Let
\[\widetilde{\Omega} = \Omega \cup (\Omega + (-h_0,1)).\]
By Lemma \ref{prop:tile}, it follows that $G = \widetilde{\Omega} + T$ is a factorization, and $\widetilde{\Omega}$ is not periodic.

Suppose that $G$  has the PT property. Then there exists a periodic subset $\widetilde{T}$ such that
\[ G = \widetilde{\Omega} + \widetilde{T}.\]

If $\widetilde{T} + (h',1) = \widetilde{T}$ with $\text{ord}(h',1) = 2$, then there exists $T' \subset H$ such that $\widetilde{T} = T' + \{(0,0),(h',1)\}$. Here $\text{ord}(\cdot)$ denotes the order of the element. Since $G = \widetilde{\Omega} + \widetilde{T}$, we have
\[H = \Omega + T' + \{(0,0), (h' - h_0,0)\}.\]
By Lemma \ref{lemma-replace}, it follows that
\[H = \Omega + T' + p\{(0,0), (h' - h_0,0)\},\]
which implies
\[H = \Omega + T' + \{(0,0), (h',0)\}.\]
However, $T' + \{(0,0), (h',0)\}$ is periodic, which is a contradiction.

Now, let us consider the case where $(h,1)$ is not a period of $\widetilde{T}$ for any $h$ with $\text{ord}(h) = 2$. For any distinct  $(h_1,i_1), (h_2,i_2)$ in $\widetilde{T}$, we have
\[
\big(\widetilde{\Omega} + (h_1,i_1)\big) \cap \big(\widetilde{\Omega} + (h_2,i_2)\big) = \emptyset.
\]
If $i_1 = i_2$, then $(\Omega + (h_1,0)) \cap (\Omega + (h_2,0)) = \emptyset$. For $i_1 \neq i_2$, assuming $i_1 = 0$ and $i_2 = 1$, we obtain
\[
\big(\Omega + (h_1,0)\big) \cap \big(\Omega - (h_0,0) + (h_2,1) + (0,1)\big) = \emptyset,
\]
which results in $\big(\Omega + (h_1,0)\big) \cap \big(\Omega + (h_2 - h_0,0)\big) = \emptyset$. Define
\[
T' = \{(h,0) : (h,0) \in \widetilde{T}\} \cup \{(h - h_0,0) : (h,1) \in \widetilde{T}\}.
\]
It follows that $\Omega + T' = H$.

Since $\widetilde{T}$ is a periodic set, it follows that if $(h,0)$ is a period of $\widetilde{T}$, then $(h,0)$ is also a period of $T'$, leading to a contradiction. Similarly, if $(h,1)$ is a period of $\widetilde{T}$, then $(2h,0)$ becomes a period of $T'$, again resulting in a contradiction.
\end{proof}

\subsubsection{Induction from $S\times\Z_{p^n}$ and $S\times \Z_{p^{n+1}}$}

We now turn to the study of how the PT property behaves under cyclic $p$-power extensions. 
The following lemma shows that if a group of the form $H = S \times \Z_{p^n}$ fails to have the PT property, then the same holds for the natural extension $G = S \times \Z_{p^{n+1}}$. This result allows us to inductively extend the non-PT property along higher $p$-powers.

\begin{lemma}\label{Thm-3.7}Let $S$ be a finite abelian group, let $p$ be a prime and let $H = S \times \Z_{p^n}$ with $n\geq 1$. If $H$ does not have the  PT property, then the group $G = S \times \Z_{p^{n+1}}$ does not have the PT  property.
\end{lemma}
\begin{proof}
Any element in $G$ can be expressed as $(s,i)$, where $s\in S$ and $i\in\mathbb{Z}_{p^{n+1}}$. The elements in $H$ take the form $(s,pi)$.
Let  $(\Omega,T)$ be a tiling pair of $H$. Assume  $\Omega$ is not periodic and $T$ cannot be replaced by a periodic set.

Let  \[K=\big\{(0,0), (0,1-p), \cdots, (0,p-1-p)\big\} \] and
  \[\widetilde{\Omega}=\Omega+K.\] By Lemma \ref{prop:2.1}, by taking $h=(0, (p^n-1)p)$, we deduce that  $\widetilde{\Omega}$  is  non-periodic and  $G=\widetilde{\Omega}+T$ is a factorization.

Suppose  $G$ has the  PT property.  Then there exists a periodic subset $\widetilde{T}$ such that $G=\widetilde{\Omega}+\widetilde{T}$ and $(0,0)\in \widetilde{T} $.
For any distinct  $(s_1,pi_1+j_1),(s_2,pi_2+j_2)\in\widetilde{T}$, $j_1,j_2\in \{0,\cdots,p-1\}$, we have
\begin{align*}
\big( \Omega+K+(s_1,pi_1+j_1) \big) \cap \big( \Omega+K+(s_2,p i_2+j_2) \big)=\emptyset.
\end{align*}
Now we shall  show that
\begin{align}\label{eq:empty}
\big(\Omega+(s_1,pi_1)\big)\cap\big(\Omega+(s_2,pi_2)\big)=\emptyset.
\end{align}

Without loss of generality, we assume that $j_1\leq j_2$. We distinguish two cases.

If $j_1=j_2$, then by the choice of $K$,  Equality \eqref{eq:empty} follows automatically.
If $j_1< j_2$,  then
\[\big(\Omega+(s_1,pi_1+j_1)\big)\cap \big(\Omega+(0,j_1-j_2)+(s_2,pi_2+j_2)\big)=\emptyset,\]
which implies
\[\big(\Omega+(s_1,pi_1)\big)\cap \big(\Omega+(s_2,pi_2)\big)=\emptyset.\]

Let
\[T'=\big\{(s,pi): (s,pi+j)\in\widetilde{T}\text{ for some }0\leq j\leq  p-1 \big\}.\]
It  follows that  $\Omega+T'=H$.

Recall that $\widetilde{T}$ is a periodic set. If $(s,pi)$ is a period of $\widetilde{T}$, then it is also a period of $T'$, leading to a contradiction. If $(s,i)$ is a period of $\widetilde{T}$ for some $i$ such that $p\nmid i$, then  $(ps,pi)\neq (0,0)$  is also a period of $\widetilde{T}$. Consequently, $(ps,pi)$ is a period of $T'$, once again resulting in a contradiction.
\end{proof}
\subsection{Proof of Theorem \ref{theorem-subgroup2}}
 The first statement in Theorem \ref{theorem-subgroup2} follows from Lemmas \ref{lem:productextension} and \ref{Thm-3.7}. 
 The second statement in Theorem \ref{theorem-subgroup2}  follows from  Lemma \ref{lem:productextension}.

\section{Extensions of the groups  with the uniformly periodic tiling property} \label{sectionUPTP}
Recall that a group $G$ possesses the  UPT property if every tile in $G$ is either uniformly periodic or dual uniformly periodic. It is easy to see  that groups exhibiting the UPT property  have the PT property.
\begin{lemma}\label{UPT>PT}
Let $G$ be a finite abelian group. If $G$ has the UPT property, then $G$ has the PT property.
\end{lemma}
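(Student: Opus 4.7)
The plan is to apply the UPT dichotomy not to $\Omega$ but to its complement $T$. Since $(\Omega,T)$ is a tiling pair, Lemma \ref{lem3p2}(b) shows that $(T,\Omega)$ is also a tiling pair; in particular $T$ is itself a tile of $G$, with $\Omega \in \mathcal{T}_T$. This preliminary symmetry observation is the one conceptual step, and it lines up the two cases of the UPT definition exactly with the two alternatives appearing in the PT property.

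Invoking UPT for the tile $T$, there are two possibilities. If $T$ is uniformly periodic, then every element of $\mathcal{T}_T$ shares a common non-zero period $g \in G$. Since $\Omega$ belongs to $\mathcal{T}_T$, we get $\Omega + g = \Omega$, so $\Omega$ is periodic and the first alternative of the PT property holds for the pair $(\Omega,T)$.

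If instead $T$ is dual uniformly periodic, then by definition there is a periodic tile $T'$ such that $(T', S)$ is a tiling pair for every $S \in \mathcal{T}_T$. Applying this with $S = \Omega$ gives that $(T',\Omega)$ is a tiling pair, and hence, again by Lemma \ref{lem3p2}(b), so is $(\Omega, T')$. Thus $T$ can be replaced by the periodic set $T'$, verifying the second alternative of the PT property. Since one of these two alternatives holds for every tiling pair, $G$ has the PT property.

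There is no real obstacle in this argument; the only subtlety is psychological, namely noticing that the natural object to feed into the UPT hypothesis is $T$ rather than $\Omega$, because the "uniformly periodic / dual uniformly periodic" dichotomy of UPT is stated in terms of properties of the tile's complements, which are exactly what the PT property controls on the other side.
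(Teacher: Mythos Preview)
Your proof is correct and follows essentially the same approach as the paper: both apply the UPT dichotomy to the second component of the tiling pair (the paper names it $\Omega$ while you name it $T$, but the roles are identical), and the two UPT cases map directly onto the two alternatives in the PT property. The only cosmetic difference is that the paper phrases the argument contrapositively in one sentence, whereas you spell out both cases explicitly.
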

\begin{proof}
Let $T$ be a  tile of $G$. Then there exists a tiling complement $\Omega$. Therefore, $T$ belongs to $\mathcal{T}_\Omega$.
If there does not exist a periodic $\Omega'$ such that $(T, \Omega')$ forms a tiling pair, then all elements in $\mathcal{T}_\Omega$ are periodic. Hence, either $T$ is periodic or $T$ has a periodic tiling complement.
\end{proof}
We can construct  more groups with the PT property based on the groups possessing  the UPT property.
 On the other hand, it is essential for the group $G$ to possess the UPT  property to guarantee that $G\times \Z_m$ has the PT property.
\begin{proposition}\label{thm:UPTPEX2}
Let $G$ be a finite abelian group. If there exists a tile $\Omega$ and distinct tiling complements $T_0,T_1, \dots, T_{n-1} \in \mathcal{T}_\Omega$ for some integer $n\geq 2$ such that the $T_i$'s do not share a common period,  each element in $\bigcap_{i=0}^{n-1} \mathcal{T}_{T_i}$ is not periodic, then for any integer $m$ with $m\geq n$ and $\gcd(m,|G|)=1$, the group $G \times \mathbb{Z}_m$ does not have the PT property.
\end{proposition}
\begin{proof} 
Elements of $G\times\mathbb{Z}_m$ can be represented by $(g,i)$ with $g\in G$ and $i\in\mathbb{Z}_m$. Let us define:
\begin{align*}
&B=\{(a,0): a\in \Omega\},\\
&S_i=\{(t,i): t\in T_i\} \text{ for }i=0,1,\dots,n-2,\\
&S_i=\{(t,i): t\in T_{n-1}\} \text{ for }i=n-1,\dots,m-1,\\
&S=\bigcup_{i=0}^{m-1} S_i.
\end{align*}
It is straightforward to verify that $B+S_i=G\times \{i\}$, hence $B+S=G\times\mathbb{Z}_m$.
We claim that $S$ is not periodic and $B$ cannot be replaced by a periodic set.

If $S$ is a periodic set with period $(g,i)$ where $g\neq 0$, then $(g,0)$ is a period of $S$ due to $\gcd(m,|G|)=1$. Consequently, all $S_i$ must have the period $(g,0)$, implying that $T_i$ for $i=0,1,\dots, n-1$ share a common period $g$, contradicting our assumption.  If $S$ is a periodic set with period $(0,i)$, then $T_0=T_{i}$. By the assumption $T_0\neq T_i$ if $i\neq 0$.   We deduce that $i=0$.  Hence $S$ is not periodic.

Suppose $B$ can be replaced by a periodic set $B'$.  Given that $|B'| = |B| = |\Omega|$, and since $|\Omega|$ divides $|G|$ and $\gcd(m, |G|) = 1$, according to Lemma \ref{lemma-replace}, we have $G \times \mathbb{Z}_m = mB' + S$, where $mB'$ is also a periodic set. However, if we view $mB'$ as a subset of $G$, this implies $G = mB' + T_i$, leading to a contradiction.
\end{proof}

The following  corollary is a  direct consequence of Proposition \ref{thm:UPTPEX2}. 
\begin{corollary} \label{NUPT}
	 If a finite abelian group $G$ does not have the  UPT property, then for a  sufficiently large integer $m$ with $\gcd(m,|G|)=1$, the group $G\times\mathbb{Z}_m$ does not  have the PT property.
\end{corollary}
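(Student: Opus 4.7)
The plan is to feed a tile witnessing the failure of UPT directly into Proposition~\ref{thm:UPTPEX2}, and to handle the degenerate case $|\mathcal{T}_\Omega|=1$ separately via the subgroup theorem.

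By hypothesis there is a tile $\Omega \subset G$ that is neither uniformly periodic nor dual uniformly periodic. Since $G$ is finite, I will enumerate $\mathcal{T}_\Omega = \{T_0, T_1, \ldots, T_{N-1}\}$. The ``not uniformly periodic'' clause says the $T_i$'s share no common non-zero period. The ``not dual uniformly periodic'' clause says that every $\Omega' \in \bigcap_{i=0}^{N-1} \mathcal{T}_{T_i}$ is non-periodic. When $N \geq 2$, these are exactly the hypotheses of Proposition~\ref{thm:UPTPEX2} with $n = N$, so choosing any integer $m \geq N$ with $\gcd(m,|G|) = 1$ (for instance any prime exceeding $\max\{N,|G|\}$) I conclude that $G \times \mathbb{Z}_m$ fails the PT property.

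It remains to dispose of the edge case $N = 1$. Here $\mathcal{T}_\Omega = \{T_0\}$, and the two negated conditions force $T_0$ non-periodic and $\Omega$ non-periodic (the latter because $\Omega \in \mathcal{T}_{T_0}$, and the intersection has no periodic element). Since $T_0$ is the unique tiling complement of $\Omega$, it cannot be replaced by any periodic set, so $G$ itself already fails the PT property. For any $m \geq 2$ with $\gcd(m,|G|)=1$ the product $G \times \mathbb{Z}_m$ has an odd prime in its order (coming either from $|G|$ or from $m$), hence is not a $2$-group; the contrapositive of Theorem~\ref{theorem-subgroup1}, applied to the subgroup $G \times \{0\} \subset G \times \mathbb{Z}_m$, then gives the conclusion.

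The substantive work is packaged inside Proposition~\ref{thm:UPTPEX2}; there is no real obstacle. The only points requiring care are the translation from the definitional negation of UPT into the proposition's technical hypotheses, and remembering to dispatch the $N=1$ case via the subgroup theorem rather than the proposition.
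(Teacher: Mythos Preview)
Your proof is correct and follows exactly the route the paper intends: the paper simply declares the corollary ``a direct consequence of Proposition~\ref{thm:UPTPEX2}'', and your argument spells out that deduction by listing all of $\mathcal{T}_\Omega$ and checking that the negations of ``uniformly periodic'' and ``dual uniformly periodic'' match the hypotheses of the proposition.

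One small remark: the $N=1$ case you treat separately is actually vacuous. If $\mathcal{T}_\Omega=\{T_0\}$ then, since $T_0+g\in\mathcal{T}_\Omega$ for every $g\in G$, we must have $T_0+g=T_0$ for all $g$, i.e.\ $T_0=G$; but then $T_0$ is periodic and $\Omega$ would be uniformly periodic, contrary to hypothesis. So $N\geq 2$ always holds and Proposition~\ref{thm:UPTPEX2} applies directly, without any appeal to Theorem~\ref{theorem-subgroup1}. Your handling of that case is not wrong, just unnecessary.
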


Now, we shall prove Theorem \ref{thm:UPTPEX}].   Remark that the second statement    is a consequence of Corollary \ref{NUPT} and the first statement.
\begin{proof}[Proof of Theorem \ref{thm:UPTPEX}]
  Let $(\Omega,T)$ be a tiling pair of  $G\times\mathbb{Z}_m$ with $0\in \Omega$, $0\in T$, $|\Omega|=m_1n_1$, $|T|=m_2n_2$, $m=m_1m_2$, and $|G|=n=n_1n_2$.
  
If $m_2>1$, according to Lemma~\ref{lemma-replace}, $G\times\mathbb{Z}_m=m_2\Omega+T$.  Write $G\times\Z_m= G\times\Z_{m_1}\times \Z_{m_2}$ and identify $G\times \Z_{m_1}$ with the subset 
$G\times \Z_{m_1}\times \{0\}$. Let $T_0=T\cap(G\times\mathbb{Z}_{m_1})$.
Then  we have \[G\times\mathbb{Z}_{m_1}=m_2 \Omega+T_0.\] 
Let $T'=T_0\times\mathbb{Z}_{m_2}$.  Then,  $G\times\mathbb{Z}_m=m_2 \Omega+T'$. 
Write \[\Omega= \bigcup_{j=0}^{m_2} \Omega_{j}\times\{j\}\]
  with $ \Omega_{j}:= \{(g,i)\in G\times \Z_{m_1}:  (g,i,j)\in \Omega\}$.
  Then, \[m_2 \Omega= \bigcup_{j=0}^{m_2}m_2 \Omega_{j}\times\{0\},\]
  and \[\Omega_{j_1}\cap \Omega_{j_2}= \emptyset, \hbox{ for } j_1\neq j_2.\] 
  By Corollary  \ref{cor-mtimes}, we have \[G\times\mathbb{Z}_{m_1}=  (\bigcup_{j=0}^{m_2}\Omega_{j})+T_0.\]
 Since $(h,i,j)+T'=(h,i,0)+T'$, we have $G\times \Z_m=\Omega+T'$, where $T'$ is a periodic set.

Now, let's assume $m_2=1$, then $|\Omega|=n_1m$, $|T|=n_2$. By Lemma~\ref{lemma-replace}, we get \[G\times\mathbb{Z}_m=\Omega+mT.\] Each element of $G\times\mathbb{Z}_m$ can be denoted as $(g,i)$, where $g\in G$ and $i\in\mathbb{Z}_{m}$. Define \[\Omega_i=\{g: (g,i)\in \Omega\}\] and \[T'=\{g: (g,0)\in mT\}.\] Consequently, $\Omega_i+T'=G$ for all $i\in\mathbb{Z}_{m}$.
Since $G$ has the UPT property,  $T'$ can be replaced by a periodic set, or $\Omega_i$ $(i\in[1,m])$ shares a common period, indicating that $\Omega$ is periodic. This  proves the first statement.

The second statement is a consequence of Corollary \ref{NUPT} and the first statement.  If  $G\times \Z_m$ does not have  the UPT  property,   then  there exists a sufficient large integer $m'$ such that  $\gcd(m',|G|m)=1$ and  $G\times \Z_m \times\Z_{m'}$ does not have the PT property, which contradicts  the first statement.
\end{proof}

\section{ Uniformly periodic tiling property}\label{sec-UPTP-group}

    It has been shown  in Section \ref{sectionUPTP}  that additional groups possessing the PT property can be constructed from those exhibiting the UPT property. This section delves into the characterization of groups with the UPT property, focusing particularly on $p$-groups.
    
 
\subsection{Cyclic groups }
We will show that for a non-periodic tile of a cyclic $p$-group, its tiling complements must be periodic with a common period. 
\begin{proposition}\label{pro3}
  Let $G=\mathbb{Z}_{p^n}$ and let $\Omega$ be a non-trivial tile of  $G$.  If $\Omega$ is not periodic, then all tiling complements are periodic of period $p^{n-1}$. 
\end{proposition}
\begin{proof}
	 Since $G$ has the Haj\'os property and $\Omega$ is not periodic, then $T$  is periodic. Hence, $T$ has a period $p^{n-1}$.
\end{proof}
On the other hand, we show that the group $\mathbb{Z}_{p^2q^2}$ does not have the UPT property.  
\begin{proposition}\label{PropP2Q2}
 Let $G=\mathbb{Z}_{p^2q^2}$.  There exists a tile \( \Omega \) which has tiling complements \(T_1, T_2 \in \mathcal{T}_{\Omega}\) such that $T_1$ and  $T_2$ do  not share a common period and  \(  \mathcal{T}_{T_1} \cap \mathcal{T}_{T_2}\) contains no periodic element. Consequently, the group  $G = \mathbb{Z}_{p^2q^2}$ does not have the UPT property.
\end{proposition}
\begin{proof}
Let $a,b\in\mathbb{Z}_{p^2q^2}$ with $\text{ord}(a)=p^2$ and $\text{ord}(b)=q^2$. Let
 \[A=\{ia:i\in\{0,\cdots, p-1\}\},\quad B=\{ib:i\in\{0,\cdots, q-1\}\},\quad \Omega=A+B.\] It is clear that $\Omega$ tiles $G$ by translation with tiling complement \[T=\{ipa+jqb:i\in\{0,\cdots, p-1\},j\in\{0,\cdots, q-1\}\}.\]

Let $T_1=M_1+N_1$ with
\begin{align*}
		&M_1=\{ipa: i\in\{0,\cdots, p-1\}\},\\
        &N_1=\{iqb: i\in\{0,\cdots, q-2\}\}\cup\{(q-1)qb+a\}, 
\end{align*}
and let $T_2=M_2+N_2$ with
\begin{align*}
		&M_2=\{ipa: i\in\{0,\cdots, p-1\}\}\cup\{(p-1)pa+b\},\\
&N_2=\{iqb: i\in\{0,\cdots, q-1\}\}.		
\end{align*}
Note that 
\begin{align*}
		\Omega+T_1&=(A+B)+(M_1+N_1)\\
		       &=\langle a\rangle+B+N_1\\
          &=\langle a\rangle+B+N_2\\
		       &=G.         
	\end{align*}
Similarly,  $\Omega+T_2=G$. Hence, $T_1,T_2 \in \mathcal{T}_{\Omega}$.  

 Note that $T_1+qb\neq T_1$ and $T_2+pa\neq T_2$. This implies that $T_1$ and $T_2$ do not share a common period.
Assume $\Omega'$ is a periodic subset  such that 
\[G=\Omega'+T_1=\Omega'+T_2.\]

If $pa$ were a period of $\Omega'$, then $pa \in \Omega' - \Omega'$. 
Note that $pa \in M_1 - M_1 \subseteq T_1 - T_1$. 
Thus $pa \in (\Omega' - \Omega') \cap (T_1 - T_1)$, which contradicts the fact that $G = \Omega' + T_2$. 
Hence $pa$ cannot be a period of $\Omega'$. 
Similarly, $qb$ cannot be a period of $\Omega'$. 

In fact, this further implies that no nontrivial linear combination $ipa + jqb$ (with $i,j \neq 0$) can be a period of $\Omega$. 
Otherwise,   $iqpa$ is  a period of $\Omega$,  so would $\ell iqpa$ for every $\ell$ coprime to $p$. 
Choosing $\ell$ such that $\ell iq \equiv 1 \pmod{p}$, we obtain that $pa$ is a period of $\Omega$, which is a contradiction.
 
  Therefore, $\Omega'$ cannot be periodic which implies that $G$ does not have the UPT property.
\end{proof}

\subsection{Rank $2$  $p$-groups }    
We will show that the two typical $p$-groups, $\mathbb{Z}_{p^n}$ and $\mathbb{Z}_p^2$ exhibit distinct tiling with uniform periodicity. For a non-periodic tile of cyclic $p$-groups, its tiling complements must be periodic with a common period. 
On the other hand, each tile in the group $G=\mathbb{Z}_{p}^{2}$ is dual uniformly periodic.
\begin{proposition}\label{pro4}
 Let $\Omega$ be a  tile of the group $G=\mathbb{Z}_{p}^{2}$ with $|\Omega|>1$.   Then, there exists a  periodic $\Omega'$ such that for any $T\in \mathcal{T}_\Omega$, $(\Omega',T) $  forms a tiling pair of $G$.
\end{proposition}
\begin{proof}
Without loss of generality, assume that $(0,0)\in \Omega $.
	Let $(r,s)\in \Omega$ be any nonzero element. Let 
	\[\Omega'=\{i(r,s): i\in[0,p-1]\}.\]
	Note that $G=\Omega+T$ if and only if for any $j$ with $\gcd(j,p)=1$, $G=\Omega+jT$ which is equivalent to \[(\Omega-\Omega)\cap j(T-T)=\{(0,0)\}.\] Then, for any $j$ with $\gcd(j,p)=1$, \[(\Omega'-\Omega')\cap j(T-T)=\{(0,0)\},\] and so $G=\Omega'+T$ for all $T\in \mathcal{T}_\Omega$.
\end{proof}

Now, we shall prove that the group  $\Z_{p^n}\times\Z_{p}$  has the  UPT property for each prime $p$. 
 We first present a useful lemma.
 \begin{lemma}\cite[Lemma 3.2]{Z2022}\label{lem:3.2}
 Let $\Omega \subset G=\Z_{p^n}\times\Z_p$. If $( p^{i_1},a), (p^{i_2},0), \cdots , (p^{i_s},0) \in  \mathcal{Z}_\Omega$ for some
$a \in  \Z_p$ and $0\leq i_1 <i_2 <\cdots <i_s \leq n-1$, then $ p^s |  |\Omega| $.
 \end{lemma}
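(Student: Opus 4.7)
The plan is to translate the hypothesis into a statement about the generating polynomial $P(X,Y)=\sum_{(x,y)\in\Omega}X^xY^y\in\Z[X,Y]$, for which $|\Omega|=P(1,1)$, and then combine a cyclotomic factorization of $P(X,1)$ with the twisted Fourier vanishing to extract divisibility by $p^s$.

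First I would process the $s-1$ vanishings $(p^{i_j},0)\in\mathcal{Z}_\Omega$ for $j\ge 2$. Each condition reads $P(\zeta_{p^{n-i_j}},1)=0$; since $P(X,1)\in\Z[X]$, Galois invariance forces the minimal polynomial $\Phi_{p^{n-i_j}}(X)$ over $\Q$ to divide $P(X,1)$. As the exponents $n-i_j$ are pairwise distinct these cyclotomic factors are pairwise coprime, and
\[
P(X,1)=R(X)\prod_{j=2}^{s}\Phi_{p^{n-i_j}}(X),\qquad R(X)\in\Z[X].
\]
Evaluating at $X=1$ and using $\Phi_{p^k}(1)=p$ yields $|\Omega|=p^{s-1}R(1)$, reducing the problem to showing $p\mid R(1)$.

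Next I would use the twisted vanishing $(p^{i_1},a)\in\mathcal{Z}_\Omega$ to extract one more factor of $p$. If $a=0$ the factor $\Phi_{p^{n-i_1}}$ can be absorbed above and $p^s\mid|\Omega|$ is immediate, so assume $a\neq 0$. Writing $P(X,Y)=\sum_{y=0}^{p-1}P_y(X)Y^y$, the hypothesis becomes
\[
\sum_{y=0}^{p-1}\zeta_p^{ay}P_y(\zeta_{p^{n-i_1}})=0
\]
in $\Z[\zeta_{p^{n-i_1}}]$. Because $a$ is a unit modulo $p$ the elements $\zeta_p^{ay}$ run over all $p$-th roots of unity, and since $\Z[\zeta_{p^{n-i_1}}]$ is free over $\Z[\zeta_p]$, the only $\Z[\zeta_{p^{n-i_1}}]$-linear relation among $1,\zeta_p,\dots,\zeta_p^{p-1}$ is $\sum_i\zeta_p^i=0$. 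Thus all $P_y(\zeta_{p^{n-i_1}})$ coincide, say equal to $c_0\in\Z[\zeta_{p^{n-i_1}}]$, and summing over $y$ gives $P(\zeta_{p^{n-i_1}},1)=p\,c_0$, so $p$ divides $P(\zeta_{p^{n-i_1}},1)$ in $\Z[\zeta_{p^{n-i_1}}]$.

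Finally I would convert this into $p\mid R(1)$ via a $p$-adic valuation computation in $\Z[\zeta_{p^{n-i_1}}]$. Normalize the valuation $v$ so that $v(p)=(p-1)p^{n-i_1-1}$; then $v(\zeta_{p^m}-1)=p^{(n-i_1)-m}$ for $1\le m\le n-i_1$, and a direct calculation from $\Phi_{p^k}(X)=(X^{p^k}-1)/(X^{p^{k-1}}-1)$ gives $v(\Phi_{p^{n-i_j}}(\zeta_{p^{n-i_1}}))=(p-1)p^{n-i_j-1}$ for each $j\ge 2$. Comparing the two expressions for $P(\zeta_{p^{n-i_1}},1)$ and using a geometric-series estimate (which exploits $i_j\ge i_1+j-1$, forced by $i_1<i_2<\cdots<i_s\le n-1$) yields
\[
v(R(\zeta_{p^{n-i_1}}))\ge(p-1)\Big(p^{n-i_1-1}-\sum_{j=2}^{s}p^{n-i_j-1}\Big)\ge 1.
\]
Since $R\in\Z[X]$ satisfies $R(\zeta_{p^{n-i_1}})\equiv R(1)\pmod{(\zeta_{p^{n-i_1}}-1)}$ with $v(\zeta_{p^{n-i_1}}-1)=1$, this forces $R(1)\in p\Z$, completing the proof. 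The main obstacle I anticipate is the final valuation bookkeeping, in particular verifying the $\ge 1$ lower bound in the borderline case $p=2$ (where the geometric-series slack is smallest); an alternative route is to forgo valuations and instead induct on $s$, using Lemma~\ref{lemma-pr} together with the vanishing at $(p^{i_s},0)$ to partition $\Omega$ into $p$ equal-sized pieces and applying the inductive hypothesis to a suitably defined smaller problem.
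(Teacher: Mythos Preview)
The paper does not supply its own proof of this lemma---it is quoted from \cite{Z2022} without argument---so I can only evaluate your proposal on its own merits.

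There is a genuine gap in Step~2. The assertion that ``the only $\Z[\zeta_{p^{n-i_1}}]$-linear relation among $1,\zeta_p,\dots,\zeta_p^{p-1}$ is $\sum_i\zeta_p^i=0$'' is false: since $n-i_1\ge 1$ we have $\zeta_p\in\Z[\zeta_{p^{n-i_1}}]$, so the powers of $\zeta_p$ already lie in the coefficient ring and satisfy a $(p-1)$-dimensional family of relations. (Freeness of $\Z[\zeta_{p^{n-i_1}}]$ over $\Z[\zeta_p]$ points in the wrong direction for what you want.) Concretely, take $p=3$, $n=2$, $i_1=0$, $a=1$ and $\Omega=\{(3,0),(3,1),(6,1)\}\subset\Z_9\times\Z_3$. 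Then $(1,1)\in\mathcal{Z}_\Omega$, yet $P_0(\zeta_9)=\zeta_3$, $P_1(\zeta_9)=\zeta_3+\zeta_3^2=-1$, $P_2(\zeta_9)=0$ are not all equal, and $P(\zeta_9,1)=\zeta_3-1$ has valuation $3<6=v(3)$ in $\Z[\zeta_9]$. So both the claim ``all $P_y(\zeta)$ coincide'' and its consequence ``$p\mid P(\zeta_{p^{n-i_1}},1)$'' fail outright, and the valuation inequality in Step~3 (which needs $v\big(P(\zeta_{p^{n-i_1}},1)\big)\ge v(p)$ as input) collapses with them.

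The alternative you mention at the end---induct on $s$ via Lemma~\ref{lemma-pr}---is the correct route. Already for $s=1$ the vanishing $\widehat{\mathbbm 1_\Omega}(p^{i_1},a)=0$ is a vanishing sum of $|\Omega|$ many $p^{\,n-i_1}$-th roots of unity, and Lemma~\ref{lemma-pr} forces the multiplicities to be constant on residue classes modulo $p^{\,n-i_1-1}$, giving $p\mid|\Omega|$ directly. Your first paragraph (extracting $p^{s-1}$ from the zeros $(p^{i_j},0)$, $j\ge 2$, via $\Phi_{p^{n-i_j}}\mid P(X,1)$) is correct and can be combined with this $s=1$ argument applied to $R(X)$ in place of a full induction; what cannot be salvaged is the linear-independence shortcut in Step~2.
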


 \begin{proposition}\label{prop-UPT1}
   For each prime $p$, the group $G=\Z_{p^n}\times\Z_p$ has the UPT property.
 \end{proposition}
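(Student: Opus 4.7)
The plan is to analyze a tile $\Omega$ in $G = \mathbb{Z}_{p^n} \times \mathbb{Z}_p$ via its Fourier zero set $\mathcal{Z}_\Omega$, writing $V_\Omega = (G \setminus \{0\}) \setminus \mathcal{Z}_\Omega$. The tiling identity $\widehat{\mathbbm{1}_\Omega} \cdot \widehat{\mathbbm{1}_{T'}} = |G|\delta_0$ forces every $T' \in \mathcal{T}_\Omega$ to satisfy $V_\Omega \subseteq \mathcal{Z}_{T'}$, so structural information about $\mathcal{Z}_\Omega$ propagates uniformly to all complements. I will dichotomize on whether $\mathcal{Z}_\Omega$ is contained in a proper subgroup of $G$ or generates all of $G$.

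In Case 1, where $\mathcal{Z}_\Omega \subseteq H$ for some proper subgroup $H \subsetneq G$, I would pick any nonzero $g \in H^\perp$ (available since $H$ is proper) and use the annihilator duality $H = (H^\perp)^\perp \subseteq \langle g \rangle^\perp$ to conclude $V_{T'} \subseteq H \subseteq \langle g \rangle^\perp$ for every $T'$, which forces $T' + g = T'$. Thus $g$ is a common nonzero period of all complements, and $\Omega$ is uniformly periodic.

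In Case 2, $\mathcal{Z}_\Omega$ generates $G$, and I aim to produce a subgroup $H \leq G$ of order $|\Omega|$ with $H^\perp \cap \mathcal{Z}_\Omega = \emptyset$; then $H^\perp \setminus \{0\} \subseteq V_\Omega \subseteq \mathcal{Z}_{T'}$ makes $(H, T')$ a tiling pair for every $T' \in \mathcal{T}_\Omega$, and the periodic subgroup $H$ witnesses dual uniform periodicity. The construction would apply Lemma \ref{lem:3.2} to characters of the form $(p^{i_1}, a), (p^{i_2}, 0), \ldots, (p^{i_s}, 0) \in \mathcal{Z}_\Omega$ to force divisibility of $|\Omega|$ by $p^s$, narrowing the possible values of $|\Omega|$. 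For each admissible $|\Omega| = p^k$, I would then check the $p + 1$ candidate subgroups of $G$ of order $p^k$, namely the $p$ cyclic subgroups $\langle (p^{n-k}, c) \rangle$ for $c \in \mathbb{Z}_p$ and the subgroup $p^{n-k+1}\mathbb{Z}_{p^n} \times \mathbb{Z}_p$, to find one whose annihilator avoids $\mathcal{Z}_\Omega$; if $\Omega$ itself is periodic, the trivial choice $\Omega' = \Omega$ already works.

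The main obstacle will be the Case 2 construction when $\Omega$ is non-periodic, which can occur because $G$ need not have the Haj\'os property. In this subcase, $\mathcal{Z}_\Omega$ generates $G$ yet $V_\Omega$ is not confined to any $\langle g \rangle^\perp$; showing that some candidate $H^\perp$ still lies inside $V_\Omega \cup \{0\}$ will require exploiting additional characters in $\mathcal{Z}_\Omega$, for example those of the form $(0, b)$ or $(p^i, b)$ with a specific second coordinate, which constrain $|\Omega|$ via Lemma \ref{lem:3.2} and also force balance on the fiber sizes of $\Omega$ along the $\mathbb{Z}_p$ direction.
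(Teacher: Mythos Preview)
Your Case 1 is correct and efficient: if $\mathcal{Z}_\Omega$ lies in a proper subgroup $H$, any nonzero $g\in H^\perp$ is a common period of all $T'\in\mathcal{T}_\Omega$, so $\Omega$ is uniformly periodic.

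The gap is in Case 2. You aim to produce a \emph{subgroup} $H$ with $|H|=|\Omega|$ and $H^\perp\cap\mathcal{Z}_\Omega=\emptyset$, but dual uniform periodicity only asks for a \emph{periodic} replacement $\Omega'$, and insisting on a subgroup is stronger than what is needed and not obviously achievable. The paper does not argue via your dichotomy: it splits instead on the size of $I_\Omega=\{i:(p^i,0)\in\mathcal{Z}_\Omega\}$, which by Lemma~\ref{lem:3.2} equals either $t-1$ or $t$ (where $|\Omega|=p^t$). When $|I_\Omega|=t-1$ it takes $\Omega'=A\times\mathbb{Z}_p$ for any $A\subset\mathbb{Z}_{p^n}$ with $\{p^i:i\in I_\Omega\}\subset\mathcal{Z}_A$; when $|I_\Omega|=t$ it further splits on whether the line $\{(1,j):j\in\mathbb{Z}_p\}$ meets $\mathcal{Z}_\Omega$, and in the hardest subcase (where $(1,0)\in\mathcal{Z}_\Omega$ but $(1,j_0)\notin\mathcal{Z}_\Omega$ for some $j_0\neq 0$) it constructs a periodic but generally \emph{non-subgroup} set
\[
\Omega'=\Bigl\{\,b(p^{n-1},\beta)+\Bigl(\textstyle\sum_{j\in J\setminus\{n-1\}} a_jp^j,\,0\Bigr):\ b,a_j\in\{0,\dots,p-1\}\Bigr\},
\]
with $\beta$ chosen so that $1+\beta j_0\equiv 0\pmod p$ and $J=\{n-1-i:i\in I_\Omega\}$. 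This $\Omega'$ has $V_{\Omega'}$ equal, up to unit multiples, to $\{(1,j_0)\}\cup\{(p^i,0):i\notin I_\Omega\}$, which is \emph{not} of the form $K^\perp\setminus\{0\}$ for a subgroup $K$ unless $I_\Omega$ happens to be an initial segment of $\{0,\dots,n-1\}$. In other words, the very cases where your ``check the $p+1$ candidate subgroups'' step is most needed are those where all $p+1$ annihilators can meet $\mathcal{Z}_\Omega$; you give no mechanism to rule this out, and the paper's construction shows that the correct replacement is a skewed digit set rather than a subgroup. The fix is exactly to drop the subgroup requirement and build $\Omega'$ explicitly from $I_\Omega$ and the single missing direction $(1,j_0)$, as the paper does.
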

 \begin{proof} Assume that $\Omega$ is a tile of  $G$. Then, $|\Omega|\mid |G|=p^{n+1}$. Assume that $|\Omega|=p^t.$ Let \[I_{\Omega}=\{0\leq i\leq n-1: (p^{i},0)\in \mathcal{Z}_\Omega \}.\]  
 By Lemma \ref{lem3p2}, for each $T\in\mathcal{T}_\Omega$, we have  $(p^i,0)\in \mathcal{Z}_T$ for $0\leq i \leq n-1$ and $i\notin I_{\Omega}$.
 By Lemma \ref{lem:3.2}, we have $|I_{\Omega}|=t-1 $ or $t$.
 
 When $|I_{\Omega}|=t-1$, let $A$ be a subset of $\Z_{p^n}$ such that $p^{i}\in \mathcal{Z}_A$ for $i\in I_{\Omega}$. Define \[\Omega'=\bigcup_{j=0}^{p-1}A\times\{j\} \subset G. \]
 It is easy to check that 
 \[\mathcal{Z}_{\Omega'}=G\setminus  \left(\{(0,0)\} \cup\{(p^i,0)\in G: i\notin I_{\Omega}\}\right). \]
Then, $(\Omega',T)$ forms a tiling pair in $G$ for each  tiling complement  $T\in \mathcal{T}_{\Omega}$.
 
 When $|I_{\Omega}|=t$, we distinguish two cases: \[\{(1,j): j\in \Z_p\}\cap \mathcal{Z}_\Omega=\emptyset,   \hbox{ or }\{(1,j): j\in \Z_p\}\cap \mathcal{Z}_\Omega\neq\emptyset.\]

If   $\{(1,j): j\in \Z_p\}\cap \mathcal{Z}_\Omega=\emptyset$, then $\{(1,j): j\in \Z_p\} \subset  \mathcal{Z}_{T}$ for each $T\in \mathcal{T}_{\Omega}$. By Lemma \ref{lem:periodic},   each $T\in \mathcal{T}_{\Omega}$ is periodic, with the same period $(p^{n-1},0)$.

Now we consider the case  $\{(1,j): j\in \Z_p\}\cap \mathcal{Z}_\Omega\neq\emptyset.$  By Lemma \ref{lem:3.2}, we have $(1,0) \in \mathcal{Z}_{\Omega}$, which implies $0\in I_{\Omega}$. 

If $ \{(1,j): j\in \Z_p\}\subset \mathcal{Z}_\Omega$, then by Lemma \ref{lem:periodic}, $\Omega$ is periodic.
Otherwise, $ (1,j_0)\notin \mathcal{Z}_\Omega$ for some $j_0\in \{1,2 \cdots, p-1\}$. Then for any tiling complement $T\subset \mathcal{T}_{\Omega}$,  by Lemma \ref{lem3p2},
$(1,j_0)\in \mathcal{Z}_T.$ 
Let $\beta\in \{1, \cdots, p-1\}$ such that \[1+\beta j_0 \equiv 0\mod p.\] 
Let $J=n-1- I_{\Omega}=\{j_1,j_2,\cdots,j_t\}$ with $0\leq j_1<j_2<\cdots<j_t=n-1$.   Define
\[\Omega'=\{ b(p^{n-1},\beta)+(\sum_{j\in J\setminus\{j_t\} }a_jp^j,0): b, a_j \in \{0,1,\cdots,p-1\}\}. \]
One can check that  $|\Omega|=|\Omega'|$  and 
\[\mathcal{Z}_{\Omega'}=G\setminus\left(\{(0,0), (1,j_0)\}\cup \{(p^i,0)\in G: i\notin I_{\Omega}\}\right). \]
Hence,  $(\Omega',T)$ forms a tiling pair for each $T\in \mathcal{T}_{\Omega}$.
 \end{proof}
 
 On the other hand, the group $\Z_{p^2} \times\Z_{p^2}$ does not have the UPT property.
\begin{proposition}\label{prop:p2p2}
For the group \( G = \mathbb{Z}_{p^2} \times \mathbb{Z}_{p^2} \), there exists a tile \( \Omega \) which has tiling complements \( T_0, T_1, T_2 \in \mathcal{T}_{\Omega} \) such that \(T_0, T_1, T_2\) do  not share a common period and  \( \cap_{j=0}^{2} \mathcal{T}_{T_j} \) contains no periodic element. Consequently, the group  $G = \mathbb{Z}_{p^2} \times \mathbb{Z}_{p^2}$ does not have  the  UPT property.
\end{proposition}
\begin{proof}
Let 
\[
\Omega = \bigcup_{i=0}^{p-1} \{(i, ip), (i, ip+1), \dots, (i, ip+p-1)\}
\]
and let
\begin{align*}
T_0 = \langle (p,1) \rangle,\quad  T_1 = \langle (0,p) \rangle + \langle (p,0) \rangle,\quad T_2 = \langle (1,0) \rangle .
\end{align*}
One can check that
\[
\Omega + T_j = \mathbb{Z}_{p^2} \times \mathbb{Z}_{p^2}.
\]
Note that \(T_0\) has period \((0,p)\), \(T_1\) has periods \((ip,jp)\) for \((i,j) \ne (0,0)\), and \(T_2\) has period \((p,0)\). Hence, $T_0, T_1$ and $ T_2 $ do not have a common period.

Now we show that $\Omega $ cannot be replaced by a periodic set \(\Omega'\). Assume that \(\Omega'\) is periodic and such that \(\Omega' + T_j = \mathbb{Z}_{p^2} \times \mathbb{Z}_{p^2}\) for \(j = 0, 1, 2\). Then, \(\Omega' = \Omega' + (\alpha, \beta)\) for some  \((\alpha, \beta) \neq (0, 0)\). If \(\mathrm{ord}(\alpha, \beta) = p\), then
\[
(\alpha, \beta) \in (T_1 - T_1) \cap (\Omega' - \Omega'),
\]
which leads to a contradiction. If \(\mathrm{ord}(\alpha, \beta) = p^2\), then \((p\alpha, p\beta)\) is a period of \(\Omega'\), which is also a contradiction.
\end{proof}

\subsection{Other $p$-groups with the  uniformly periodic tiling property}
Now, we show that  each tile $\Omega$ in  $\Z_p^3$ can be replaced by a subgroup $\Omega'$ such that $(\Omega',T)$ forms a tiling pair for each $T\in \mathcal{T}_{\Omega}$. 
 We will also show that  the groups $\mathbb{Z}_{p}^{3},\ \mathbb{Z}_{2}^{5}$ have the UPT property, that is each tile in these groups is either uniformly periodic or dual uniformly periodic. Remark that we only need consider the tiles which contains $0$.

 \begin{lemma}\label{lemma--1}
 Let $p$ be a prime, $G=\mathbb{Z}_{p}^n$ and $\Omega$ be a tile of $G$ with $|\Omega|=p^{n-1}$, $0\in \Omega$. Then, there exists a  subgroup  $\Omega'$ such that 
 for any $T\in \mathcal{T}_\Omega$, $(\Omega',T) $  forms a tiling pair of $G$.
 \end{lemma}
 \begin{proof}
 	Since $|\Omega|=p^{n-1}$, then it follows that $\mathcal{Z}_{\Omega}\neq  \mathbb{Z}_{p}^{n} \setminus\{(0,\dots,0)\}$.
 	Let  $(x_1,\dots,x_n)\notin \mathcal{Z}_{\Omega}\cup \{(0,\dots,0)\}$. Then
 	$j(x_1,\dots,x_n)\notin \mathcal{Z}_{\Omega}\cup \{(0,\dots,0)\}$ for $j\in \{1,\cdots,p-1\}$. 
 	Take \[\Omega'=\{(\omega_1,\dots,\omega_3)\in \mathbb{Z}_{p}^{3} : x_1\omega_1+\dots+x_3\omega_3=0\},\]
 	which is a subgroup of order $p^{n-1}$  such that 
 	\[{\rm supp} \widehat{\mathbbm{1}_{\Omega'}}=\{j(x_1,\dots,x_3): j\in[0,p-1]\}.\]
 	By Statement (e) of Lemma \ref{lem3p2}, it follows that $G=\Omega'+T$ for all $T\in \mathcal{T}_\Omega$.
 \end{proof}

\begin{proposition}\label{pro6}
Let $G=\mathbb{Z}_{p}^{3}$ and let $\Omega$ be a non-trivial tile of $G$ with $0\in \Omega$.   Then, there exists a  subgroup  $\Omega'$ such that 
 for any $T\in \mathcal{T}_\Omega$, $(\Omega',T) $  forms a tiling pair of $G$.
 \end{proposition}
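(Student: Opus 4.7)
The plan is to exhibit a subgroup $\Omega' \leq G$ of the same cardinality as $\Omega$ and verify via the Fourier-theoretic criterion (Lemma~\ref{lem3p2}(e)) that $(\Omega', T)$ is a tiling pair for every $T \in \mathcal{T}_\Omega$. Since $|\Omega|$ divides $p^3$, the substantive cases are $|\Omega| = p$ and $|\Omega| = p^2$; for $|\Omega| \in \{1, p^3\}$ one takes $\Omega' \in \{\{0\}, G\}$. The unified strategy is to choose $\Omega'$ so that its annihilator $(\Omega')^\perp \subseteq \widehat{G}$ meets $\mathcal{Z}_\Omega$ only at the origin. Then the tiling relation gives $\mathcal{Z}_T \supseteq \widehat{G} \setminus \{0\} \setminus \mathcal{Z}_\Omega \supseteq (\Omega')^\perp \setminus \{0\}$ for every $T \in \mathcal{T}_\Omega$, and since $\mathcal{Z}_{\Omega'} = \widehat{G} \setminus (\Omega')^\perp$ for a subgroup, one obtains $\mathcal{Z}_{\Omega'} \cup \mathcal{Z}_T = \widehat{G} \setminus \{0\}$; combined with $|\Omega'| \cdot |T| = |G|$, Lemma~\ref{lem3p2}(e) yields the tiling pair.

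The key structural input is that $\mathcal{Z}_\Omega$ is closed under scalar multiplication by nonzero elements of $\mathbb{Z}_p$, hence is a union of punctured lines $\langle \alpha \rangle \setminus \{0\}$. Indeed, if $\alpha \in \mathcal{Z}_\Omega$, the vanishing sum principle (Lemma~\ref{lemma-pr} with $p^n = p$) forces $\chi_\alpha$ to attain each $p$-th root of unity with equal multiplicity $|\Omega|/p$ on $\Omega$; replacing $\alpha$ by $k\alpha$ for $k \not\equiv 0 \pmod{p}$ merely permutes those roots, so $k\alpha \in \mathcal{Z}_\Omega$ as well. In the particular case $|\Omega| = p$, the multiplicity equals $1$, so $\chi_\alpha|_\Omega$ is a bijection onto the $p$-th roots of unity; consequently $\chi_\alpha(v) \neq 1$ for every nonzero $v \in \Omega$ and every $\alpha \in \mathcal{Z}_\Omega$.

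With these facts in hand the constructions are immediate. If $|\Omega| = p$, pick any nonzero $v \in \Omega$ and set $\Omega' = \langle v \rangle$, whose annihilator $\{\alpha : \chi_\alpha(v) = 1\}$ avoids $\mathcal{Z}_\Omega$ by the bijectivity observation. If $|\Omega| = p^2$, then $\mathcal{Z}_\Omega \subsetneq \widehat{G} \setminus \{0\}$ (else $\mathbbm{1}_\Omega$ is constant, forcing $\Omega = G$), so there is a nonzero $\alpha_1 \notin \mathcal{Z}_\Omega$; set $\Omega' = \ker \chi_{\alpha_1}$, a subgroup of order $p^2$ whose annihilator is the line $\langle \alpha_1 \rangle$, and by the union-of-lines property this entire line is disjoint from $\mathcal{Z}_\Omega$. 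The main subtlety is establishing the union-of-lines structure of $\mathcal{Z}_\Omega$, which genuinely exploits the elementary $p$-group setting where all character values are $p$-th roots of unity; the same invariance fails in mixed-order groups such as $\mathbb{Z}_{p^2 q^2}$, mirroring the breakdown of uniform periodicity exhibited in Proposition~\ref{PropP2Q2}.
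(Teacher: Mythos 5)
Your proof is correct and constructs exactly the same subgroups as the paper (the line $\langle v\rangle$ for a nonzero $v\in\Omega$ when $|\Omega|=p$, and the hyperplane $\ker\chi_{\alpha_1}$ for some $\alpha_1\notin\mathcal{Z}_\Omega$ when $|\Omega|=p^2$), with the $|\Omega|=p^2$ case verified in essentially the identical way via Lemma~\ref{lem3p2}(e) and the dilation-invariance of $\mathcal{Z}_\Omega$. The only cosmetic difference is in the $|\Omega|=p$ case, where the paper checks the difference-set criterion of Lemma~\ref{lem3p2}(d) using the dilation argument of Proposition~\ref{pro4}, while you check criterion (e) via the equidistribution of character values forced by Lemma~\ref{lemma-pr}; both are sound.
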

 \begin{proof}
Note that  $|\Omega|\mid |G|$.  We distinguish two cases: $|\Omega|=p$, or $|\Omega|=p^2$.
 
 For $|\Omega|=p$, take any  non-zero element $\omega\in \Omega$ and let 
 \[\Omega'=\{i \omega: i\in[0,p-1]\}.\] As in the argument of Proposition \ref{pro4}, $G=\Omega'+T$ for all $T\in \mathcal{T}_\Omega$.
 
 For $|\Omega|=p^2$, the result follows from Lemma~\ref{lemma--1}.
 \end{proof}

%
In the reminder of this section, we shall show that $\mathbb{Z}_{2}^{5}$ and $\mathbb{Z}_4\times\mathbb{Z}_{2}^2$ have the UPT property by using their Haj\'os property.

\begin{proposition}\label{prop15}
	The group $G=\mathbb{Z}_{2}^{5}$ has the UPT property.
\end{proposition}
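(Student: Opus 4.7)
The plan is to leverage the Haj\'os property of $G=\mathbb{Z}_2^5$ (included in Sands' list recalled in the introduction): every tiling pair $(\Omega,T)$ of $G$ has at least one periodic factor. I fix a tile $\Omega\ni 0$. If $\Omega$ is itself periodic, taking $\Omega':=\Omega$ immediately gives dual uniform periodicity, so I may assume $\Omega$ is non-periodic; then every $T\in\mathcal{T}_\Omega$ must be periodic. Since $|\Omega|$ divides $|G|=32$ and $|\Omega|=2$ would force $\Omega$ to be a subgroup, the remaining subcases are $|\Omega|\in\{4,8,16\}$.

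For $|\Omega|=16$ I mimic Proposition~\ref{pro5}: pick any nonzero $a$ in the Fourier support $S_\Omega:=\mathrm{supp}(\widehat{\mathbbm{1}_\Omega})$ and set $\Omega':=a^\perp$, a periodic hyperplane of order $16$ with $S_{\Omega'}=\{0,a\}$; since $S_\Omega\cap S_T=\{0\}$ for every $T\in\mathcal{T}_\Omega$ we have $a\notin S_T$, whence $(\Omega',T)$ is a tiling pair by Lemma~\ref{lem3p2}. For $|\Omega|=4$ the non-periodic $\Omega$ is not a subgroup, so I can choose $g_1,g_2\in\Omega\setminus\{0\}$ with $g_1+g_2\notin\Omega$; these are linearly independent, and $\Omega':=\{0,g_1,g_2,g_1+g_2\}$ is a periodic subgroup of order $4$ satisfying $\Omega'-\Omega'=\Omega'\subseteq\Omega-\Omega$ (using $g_1+g_2=g_1-g_2$ in characteristic $2$), so $(\Omega',T)$ is a tiling pair for every $T\in\mathcal{T}_\Omega$ by Lemma~\ref{lem3p2}.

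The main case is $|\Omega|=8$, where $|T|=4$. Since $T$ is periodic with $0\in T$ (after translation), picking a period $h\neq 0$ of $T$ and any $s\in T\setminus\{0,h\}$ gives $T=\langle h,s\rangle$, so every $T\in\mathcal{T}_\Omega$ is a $2$-dimensional subspace of $G$. My strategy is to exhibit a $2$-dimensional subspace $U\subseteq S_\Omega\cup\{0\}$ of $\widehat{G}$: taking $\Omega':=U^\perp$ then yields a periodic subgroup of order $8$ with $S_{\Omega'}=U$, whence $S_{\Omega'}\cap S_T\subseteq S_\Omega\cap S_T=\{0\}$ and $(\Omega',T)$ is a tiling pair for every $T\in\mathcal{T}_\Omega$.

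Producing such a $U$ is the crux, and my argument proceeds by contradiction. If no $2$-dimensional subspace lies in $S_\Omega\cup\{0\}$, then $S_\Omega\setminus\{0\}$ is a sum-free subset of $\mathbb{Z}_2^5$, and the classification of sum-free sets in elementary abelian $2$-groups produces $x\in G\setminus\{0\}$ with $S_\Omega\cap x^\perp=\{0\}$. Poisson summation over the order-$16$ subgroup $x^\perp\le\widehat{G}$, whose annihilator in $G$ is $\{0,x\}$, then yields
\[
\sum_{\chi\in x^\perp}\widehat{\mathbbm{1}_\Omega}(\chi)=|x^\perp|\cdot|\Omega\cap\{0,x\}|.
\]
Under the assumption the left-hand side collapses to $\widehat{\mathbbm{1}_\Omega}(0)=|\Omega|=8$, forcing $|\Omega\cap\{0,x\}|=8/16=1/2$, which is not an integer. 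This contradiction delivers $U$ and completes the proof.
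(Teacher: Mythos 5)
Your reductions for $|\Omega|\in\{2,4,16\}$ are correct and essentially match the paper's (Proposition~\ref{pro5}-style) arguments, as is your observation that in the case $|\Omega|=8$ every tiling complement is, up to translation, a rank-$2$ subgroup, and that a $2$-dimensional subspace $U\subseteq S_\Omega\cup\{0\}$ of the dual group would finish the proof via $\Omega'=U^\perp$ and Lemma~\ref{lem3p2}(e). The Poisson summation computation at the end is also correct as far as it goes.

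The gap is the middle step of your main case: it is \emph{false} that a sum-free set in $\mathbb{Z}_2^5$ must be contained in the complement of a hyperplane $x^\perp$. Sum-free subsets of $\mathbb{Z}_2^5\setminus\{0\}$ are exactly caps in $PG(4,2)$ (no three points summing to zero), and only the \emph{maximum-size} sum-free sets, of size $2^{n-1}$, are classified as nonzero cosets of index-$2$ subgroups; by Davydov--Tombak there exist inclusion-maximal caps in $PG(n-1,2)$ of size $5\cdot 2^{n-3}$ (size $10$ for $n=5$) lying in no hyperplane complement. Concretely, $B=\{e_1,e_2,e_3,e_4,e_1+e_2+e_3+e_4\}\subset\mathbb{Z}_2^5$ is sum-free, yet any $x$ with $x\cdot e_i=1$ for $i=1,\dots,4$ satisfies $x\cdot(e_1+e_2+e_3+e_4)=0$, so $B$ meets every hyperplane $x^\perp$ and your $x$ need not exist. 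Hence the contradiction is never reached, and the existence of the subspace $U$ is unproven; note also that this existence is a strictly stronger structural claim than what is actually needed (a periodic, not necessarily subgroup-dual, replacement), and the paper never establishes it. Instead, the paper handles $|\Omega|=8$ by a direct combinatorial analysis: writing $T=\langle v_4,v_5\rangle$, decomposing $\Omega$ into the four fibers $\Omega_{00},\Omega_{01},\Omega_{10},\Omega_{11}$ over $\langle v_1,v_2,v_3\rangle$, and in each configuration exhibiting an explicit periodic replacement set (a union of cosets such as $\Omega_{00}\cup(\Omega_{00}+g+v_5)$). To repair your argument you would need to show that the Fourier support of an $8$-element non-periodic tile cannot be one of these non-degenerate caps --- a substantive additional argument (the Parseval constraint $\sum_{\chi\neq 0}|\widehat{\mathbbm{1}_\Omega}(\chi)|^2=192$ rules out the $5$-point frame above, but not, e.g., $10$-point caps), not something the classification of sum-free sets supplies.
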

\begin{proof}
	Assume $\Omega$ is a tile of $G$ with $0\in \Omega$.  Then $|\Omega|\mid|G|$.
	
	If $|\Omega|=2$, then $\Omega$ is a periodic set.
	
	If $|\Omega|=4$, for any $g_1,g_2\in \Omega\backslash\{0\}$, let $\Omega'=\{0,g_1,g_2,g_1+g_2\}$, then $(\Omega'-\Omega')\subset(\Omega-\Omega)$. Hence $(\Omega'-\Omega')\cap(T-T)=\{0\}$ for each $T\in \mathcal{T}_{\Omega}$, and so $G=\Omega'+T$. Thus $\Omega$ can be replaced by a periodic set $\Omega'$.

If $|\Omega|=16$, the result follows from Lemma~\ref{lemma--1}.

Assume $|\Omega|=8$  and $\Omega$ is not periodic. For any $T\in\mathcal{T}_{\Omega}$, $T$ is periodic since $G$ has the Haj\'os property. Note that $|T|=4$. Then each $T\in \mathcal{T}_{\Omega}$ is a subgroup of $G$. 
	Hence, there exists $v_1,v_2,v_3,v_4,v_5\in\mathbb{Z}_{2}^{5}$ such that $\text{rank}(v_1,v_2,v_3,v_4,v_5)=5$, $T=\langle v_4,v_5\rangle$, and $\Omega$ has the following form
	\[\Omega=\big\{\sum_{i=1}^{3}a_iv_i+f_1(a_1,a_2,a_3)v_4+f_2(a_1,a_2,a_3)v_5: a_1,a_2,a_3\in\{0,1\}\big\}.\]
	Let 
	\begin{align*}
		&\Omega_{00}=\{\sum_{i=1}^{3}a_iv_i:f_1(a_1,a_2,a_3)=0,\ f_2(a_1,a_2,a_3)=0\},\\
		&\Omega_{01}=\{\sum_{i=1}^{3}a_iv_i:f_1(a_1,a_2,a_3)=0,\ f_2(a_1,a_2,a_3)=1\},\\
		&\Omega_{10}=\{\sum_{i=1}^{3}a_iv_i:f_1(a_1,a_2,a_3)=1,\ f_2(a_1,a_2,a_3)=0\},\\
		&\Omega_{11}=\{\sum_{i=1}^{3}a_iv_i:f_1(a_1,a_2,a_3)=1,\ f_2(a_1,a_2,a_3)=1\}.
	\end{align*}
	Then $\Omega_{00}\cup\Omega_{01}\cup\Omega_{10}\cup\Omega_{11}=V=\big\{\sum_{i=1}^{3}a_iv_i: a_1,a_2,a_3\in\{0,1\}\big\}$. Without loss of generality, we may assume $|\Omega_{00}|\ge|\Omega_{01}|\ge|\Omega_{10}|\ge|\Omega_{11}|$.
	
	If $|\Omega_{00}|\ge5$, then $V\subset(\Omega_{00}-\Omega_{00})$, and so $\Omega$ can be replaced by the periodic set $V$.
	
	If $|\Omega_{00}|=4$ and $\Omega_{00}=\{0,g_1,g_2,g_1+g_2\}$ for some $g_1,g_2\in V$, let $g\in V\backslash\Omega_{00}$, then at least 2 of $g,g+g_1,g+g_2,g+g_1+g_2$ belong to $\Omega_{01}$. By a direct computation, $\Omega$ can be replaced by the periodic set $\Omega_{00}\cup(\Omega_{00}+g+v_5)$.
	
	If $|\Omega_{00}|=4$ and $\Omega_{00}=\{0,g_1,g_2,g_3\}$ with $\text{rank}(g_1,g_2,g_3)=3$, then at least 2 of $g_1+g_2,g_1+g_3,g_2+g_3,g_1+g_2+g_3$ belong to $\Omega_{01}$. WLOG, assume that $\Omega_{01}\supset\{g_1+g_2,g_1+g_3\}$, then $\Omega$ can be replaced by the periodic set
 \[\{0,g_2,g_3,g_2+g_3\}\cup \left(  \{0,g_2,g_3,g_2+g_3\} +g_1+v_5\right).\]
	
	If $|\Omega_{00}|=3$ and $\Omega_{00}=\{0,g_1,g_2\}$ for some $g_1,g_2\in V$, then $|\Omega_{01}|=3$ or $|\Omega_{01}|=|\Omega_{10}|=2$. Let $g\in V\backslash\langle g_1,g_2\rangle$, then at least 2 of $g,g+g_1,g+g_2,g+g_1+g_2$ belong to $\Omega_{01}$ or $\Omega_{10}$. Hence, $\Omega$ can be replaced by the periodic set 
	\[\{0,g_1,g_2,g_1+g_2\}\cup \left(\{0,g_1,g_2,g_1+g_2\}+ g+v_5\right) \]
or
 \[\{0,g_1,g_2,g_1+g_2\}\cup \left(\{0,g_1,g_2,g_1+g_2\}+ g+v_4\right).\]

	If $|\Omega_{00}|=2$, then $|\Omega_{01}|=|\Omega_{10}|=|\Omega_{11}|=2$. Suppose there exist $i,j,k,l\in\{0,1\}$ and $g\in V$ such that $\Omega_{ij}=g+\Omega_{kl}$ for $(i,j)\neq (k,l)$. WLOG, we assume that $\Omega_{00}=\{0,g_1\}$, $\Omega_{01}=\{g_2,g_2+g_1\}=g_2+\Omega_{00}$. If $\Omega_{10}=\Omega_{11}+g_2$, then $\Omega$ has period $g_2+v_5$. If $\Omega_{10}\ne\Omega_{11}+g_2$, then $\Omega_{10}=\{g_3,g_3+g_2\}$ and $\Omega_{11}=\{g_3+g_1,g_3+g_1+g_2\}$. Hence, $\Omega+g_1+g_2+v_5=\Omega$.

	Now we assume for any $i,j,k,l\in\{0,1\}$ and $g\in V$, $\Omega_{ij}\ne g+\Omega_{kl}$. Without loss of generality, we have
	\begin{align*}
	&	\Omega_{00}=\{0,g_1\}, \Omega_{01}=\{g_2,g_3\},\\
	&\Omega_{10}=\{g_1+g_2,g_2+g_3\}, \Omega_{11}=\{g_1+g_3,g_1+g_2+g_3\},
	\end{align*}
	for some $g_1,g_2,g_3\in V$ with $\text{rank}(g_1,g_2,g_3)=3$. Then, $\Omega$ can be replaced by the periodic set
\[\Omega_{00}\cup  \big(\Omega_{00} +g_2+g_3+v_4\big) \cup \big( \Omega_{00} +g_2+v_5\big) \cup  \big( \Omega_{00}+g_3+v_4+v_5\big). \]
\end{proof}

Before proving the UPT of $\mathbb{Z}_4\times\mathbb{Z}_{2}^2$, we need the following lemmas.
\begin{lemma}\label{lemma--16}
	Let $G=A+B$, where $|A|=2$, $A$ is not periodic. If $A=\{e,a\}$, then $B=B+2a$.
\end{lemma}
\begin{proof}
	Since $G=A+B$, then $G=B\cup(a+B)$. We also have $(a+B)\cup(2a+B)=G$, this implies $B=B+2a$.
\end{proof}

\begin{lemma}\label{lemma-4e}
	Let $G=A+B$, where $|A|=4$, $A$ is not periodic. If $A=\{a_1,a_2,a_3,a_4\}$ and $2a_1=2a_2$, then $a_1-a_2+a_3-a_4\ne0$ or  $a_1-a_2-a_3+a_4\ne0$. Moreover, $a_1-a_2+a_3-a_4+B=B$ and $a_1-a_2-a_3+a_4+B=B$.
\end{lemma}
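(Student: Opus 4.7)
The plan is to set $d:=a_1-a_2$, which by hypothesis satisfies $2d=0$ and, since the $a_i$ are distinct, is nonzero, so $d$ has order exactly $2$. Write $g:=d+(a_3-a_4)$ and $g':=d-(a_3-a_4)$ for the two elements appearing in the statement. I will establish the non-vanishing claim by contradiction and the periodicity claims $g+B=B$ and $g'+B=B$ through Fourier analysis, using the tiling characterization in Lemma \ref{lem3p2}(e).

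For the non-vanishing, suppose $g=g'=0$. Then $a_3-a_4=-d=d$ (the last equality because $2d=0$), and in particular $2(a_3-a_4)=0$, that is $2a_3=2a_4$. A direct computation now gives $A+d=A$: the pair $\{a_1,a_2\}$ is swapped, since $a_1+d=a_2$ and $a_2+d=a_1$, while $a_3+d=2a_3-a_4=a_4$ (using $2a_3=2a_4$) and similarly $a_4+d=a_3$. This contradicts the non-periodicity of $A$, so at least one of $g,g'$ must be nonzero.

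For the periodicity, observe that $g+B=B$ is equivalent to $\chi_\xi(g)=1$ for every $\xi\in\widehat{G}$ with $\widehat{\mathbbm{1}_B}(\xi)\neq 0$, and by Lemma \ref{lem3p2}(e) such $\xi$ lie in $\{0\}\cup\mathcal{Z}_A$; the analogous reduction applies to $g'$. Since $2d=0$, one has $\chi_\xi(d)\in\{1,-1\}$, and I split accordingly on $\mathcal{Z}_A$. If $\chi_\xi(d)=1$, then $\chi_\xi(a_1)=\chi_\xi(a_2)$, so $0=\widehat{\mathbbm{1}_A}(\xi)=2\chi_\xi(a_1)+\chi_\xi(a_3)+\chi_\xi(a_4)$; two unit complex numbers can sum to a quantity of modulus $2$ only when they coincide, which pins down $\chi_\xi(a_3)=\chi_\xi(a_4)=-\chi_\xi(a_1)$ and yields $\chi_\xi(g)=\chi_\xi(g')=1$. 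If $\chi_\xi(d)=-1$, the first two terms of $\widehat{\mathbbm{1}_A}(\xi)$ cancel, forcing $\chi_\xi(a_3-a_4)=-1$, and then $\chi_\xi(g)=\chi_\xi(g')=(-1)(\pm 1)=1$ as well. The only mildly delicate step is this triangle-inequality cancellation in the $\chi_\xi(d)=1$ case; everything else is direct character arithmetic together with the tiling-to-Fourier dictionary already recorded in Lemma \ref{lem3p2}.
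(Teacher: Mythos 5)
Your proof is correct, and the first half (the non-vanishing claim) is essentially the paper's own argument: both of you observe that if $a_1-a_2+a_3-a_4=0=a_1-a_2-a_3+a_4$ then $a_1-a_2$ is a nonzero period of $A$. For the ``moreover'' part, however, you take a genuinely different route. The paper argues purely set-theoretically: writing $G=a_1+A+B=a_2+A+B$ as two partitions of $G$ into four cosets of the form $a_i+a_j+B$, noting that the parts $2a_1+B=2a_2+B$ and $a_1+a_2+B$ are common to both, and that $a_1+a_3+B\neq a_2+a_3+B$ because $(a_1+B)\cap(a_2+B)=\emptyset$, it concludes the remaining parts must cross-match, i.e.\ $a_1+a_3+B=a_2+a_4+B$ and $a_1+a_4+B=a_2+a_3+B$, which is exactly the claim. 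You instead pass to the Fourier side via Lemma \ref{lem3p2}(e) and verify $\chi_\xi(g)=\chi_\xi(g')=1$ on $\{0\}\cup\mathcal{Z}_A$ by splitting on $\chi_\xi(a_1-a_2)=\pm1$; the equality case of the triangle inequality that you flag is handled correctly, and the reduction of $g+B=B$ to a condition on the support of $\widehat{\mathbbm{1}_B}$ is valid on a finite group. The paper's version is more elementary (no Fourier transform needed), while yours makes transparent that the period is determined by $\mathcal{Z}_A$ alone and hence is shared by \emph{every} tiling complement of $A$ --- which is precisely how the lemma is exploited in Proposition \ref{prop-UPT3} (though, to be fair, the statement as written already yields this, since $g$ and $g'$ do not depend on $B$). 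One cosmetic nit: in your last case the line ``$\chi_\xi(g)=\chi_\xi(g')=(-1)(\pm 1)=1$'' reads as wrong arithmetic; what you mean is that $\chi_\xi(a_3-a_4)=\chi_\xi(a_4-a_3)=-1$ (a value of $-1$ is its own inverse), so both products are $(-1)(-1)=1$.
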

\begin{proof}
	It is easy to see that $G=a_1+A+B=a_2+A+B$, then
	\begin{align*}
		G=&(2a_1+B)\cup(a_1+a_2+B)\cup(a_1+a_3+B)\cup(a_1+a_4+B)\\
		=&(a_1+a_2+B)\cup(2a_2+B)\cup(a_2+a_3+B)\cup(a_2+a_4+B).
	\end{align*}
	Since $(a_1+B)\cap(a_2+B)=\emptyset$, then $a_1+a_3+B=a_2+a_4+B$ and $a_1+a_4+B=a_2+a_3+B$. That is $a_1-a_2+a_3-a_4+B=B$ and $a_1-a_2-a_3+a_4+B=B$.
	
	If $a_1-a_2+a_3-a_4=0$ and $a_1-a_2-a_3+a_4=0$, then $a_1-a_2$ is a period of $A$, which is a contradiction.
\end{proof}

\begin{lemma}\label{lemma-Z23}
	Let $S\subset\mathbb{Z}_{2}^{3}$ be a set such that for any $s_1,s_2\in S$, we have $s_1+s_2\notin S$. Then there exists non-zero $g_1,g_2\in \mathbb{Z}_{2}^{3}$ such that $g_1,g_2,g_1+g_2\notin S$.
\end{lemma}
\begin{proof}
	If $S$ has only one nonzero element, then it is easy to see that the result follows. Now we assume $S$ has at least two nonzero elements. Let $s_1,s_2\in S$ with $s_1,s_2\ne0$. Let $h\in \mathbb{Z}_{2}^{3}$ such that $\text{rank}(s_1,s_2,h)=3$.
	
	If $h\in S$ or $s_1+s_2+h\in S$, then we can choose $g_1=s_1+s_2$ and $g_2=s_1+h$.
	
	If $s_1+h\in S$ or $s_2+h\in S$, then we can choose $g_1=s_1+s_2$ and $g_2=h$.
	
	This finishes the proof.
\end{proof}

\begin{proposition}\label{prop17}
	The groups $\mathbb{Z}_{4}\times\mathbb{Z}_2\times\mathbb{Z}_2$ has the UPT property.
\end{proposition}
\begin{proof}
	Assume $\Omega$ is a tile of $G$ with $0=(0,0,0)\in \Omega$, then $|\Omega|\mid|G|$.
	
	If $|\Omega|=2$ and $\Omega$ is not periodic, then $\Omega=\{(0,0,0),(i,j,k)\}$ with $i\ne0$  or  $2$. By Lemma~\ref{lemma--16}, $(2i,0,0)$ is a period of $T$ for any $T\in\mathcal{T}_{\Omega}$.

If $|\Omega|=4$ and $\Omega$ is not periodic, then there exist two elements $g_1,g_2\in\Omega$ such that $2g_1=2g_2$. By Lemma~\ref{lemma-4e}, all $T\in\mathcal{T}_{\Omega}$ then share a common period.

	If $|\Omega|=8$ and $\Omega$ is not periodic, then for any $T\in\mathcal{T}_{\Omega}$, $|T|=2$ and $T$ is periodic. If there exists $T_1,T_2,T_3\in\mathcal{T}_{\Omega}$ such that $T_i=\langle g_i\rangle$ and $g_1+g_2=g_3$, then $|\Omega\cap (g+\langle g_1,g_2\rangle)|\le1$ for all $g\in G$. Hence $|\Omega|\le4$, which is a contradiction. By Lemma~\ref{lemma-Z23}, there exists $g_1\ne g_2$ with $\text{ord}(g_1)=\text{ord}(g_2)=2$ such that $\langle g_1\rangle,\langle g_2\rangle,\langle g_1+g_2\rangle\notin\mathcal{T}_{\Omega}$. Hence,  $\Omega$ can be replaced by the periodic set
	\[\langle g_1,g_2\rangle+\{(0,0,0),(1,0,0)\}.\]
\end{proof}

\section{Proof of Theorems \ref{thm-cyc}, \ref{thm-noncyc} and \ref{thm-nonQG}  }\label{sec-proofs}
We aim to provide a complete list of groups with the PT property, but we have not yet achieved this. We have identified all  cyclic groups with the PT property. Additionally, we have found a series of non-cyclic groups with the PT property.

\subsection{Proof of Theorem \ref{thm-cyc} }
It is proved in Proposition \ref{pro3} that the group $\Z_{p^n}$ has the UPT property.  
By  Theorem \ref{thm:UPTPEX},  the group $\Z_{p_{1}^np_2p_3\cdots p_k}$  has the PT property.  

\begin{proposition}\label{theorem-cyclic-good}
 The group $\mathbb{Z}_{p_{1}^np_2p_3\cdots p_k}$ has the PT property.
\end{proposition}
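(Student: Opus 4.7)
The plan is to reduce the statement to a direct application of the machinery already developed. First, I would observe that the cyclic $p$-group $\mathbb{Z}_{p_1^n}$ has the UPT property: by Proposition \ref{pro3}, every non-periodic tile of $\mathbb{Z}_{p_1^n}$ is uniformly periodic (with common period $p_1^{n-1}$ for all of its tiling complements), while every periodic tile $\Omega$ is trivially dual uniformly periodic by taking $\Omega'=\Omega$ in the definition. Hence $\mathbb{Z}_{p_1^n}$ falls under the hypothesis of Theorem \ref{thm:UPTPEX}.

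Next, since $p_1, p_2, \ldots, p_k$ are distinct primes, the Chinese Remainder Theorem gives the isomorphism
\[
\mathbb{Z}_{p_1^n p_2 p_3 \cdots p_k} \;\cong\; \mathbb{Z}_{p_1^n} \times \mathbb{Z}_{p_2 p_3 \cdots p_k}.
\]
Setting $G = \mathbb{Z}_{p_1^n}$ and $m = p_2 p_3 \cdots p_k$, we have that $m$ is square-free and $\gcd(|G|,m) = \gcd(p_1^n, p_2\cdots p_k) = 1$.

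The conclusion is then immediate from Theorem \ref{thm:UPTPEX}(1): for any finite abelian group $G$ with the UPT property and any square-free integer $m$ coprime to $|G|$, the product $G \times \mathbb{Z}_m$ has the PT property. Applying this with our choice of $G$ and $m$ yields that $\mathbb{Z}_{p_1^n p_2 p_3 \cdots p_k}$ has the PT property.

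Since this is a purely assembly-level argument, there is no real obstacle to overcome at this stage; all the difficulty has been absorbed into Proposition \ref{pro3} (the structure of tiling complements in $\mathbb{Z}_{p^n}$) and Theorem \ref{thm:UPTPEX} (the extension from UPT on $G$ to PT on $G\times \mathbb{Z}_m$). The only minor point worth flagging explicitly is why periodic tiles cause no trouble in the UPT definition for $\mathbb{Z}_{p_1^n}$, which is handled by the tautological choice $\Omega' = \Omega$.
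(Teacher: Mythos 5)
Your proposal is correct and follows essentially the same route as the paper: the authors likewise deduce the UPT property of $\mathbb{Z}_{p_1^n}$ from Proposition \ref{pro3} and then invoke Theorem \ref{thm:UPTPEX} via the Chinese Remainder Theorem decomposition. Your explicit remark that periodic tiles are dual uniformly periodic by taking $\Omega'=\Omega$ is a small but welcome clarification that the paper leaves implicit.
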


It is known that  $\Z_{p^2q^2}$ and all its subgroups have the Haj\'os property, where $p,q$ are different primes.
We will prove that groups containing $\Z_{p^2q^2}$ as a proper subgroup  do not have the PT property. By Theorem \ref{theorem-subgroup1}, it suffices to show that $\mathbb{Z}_{p^2q^2r}$ and $\mathbb{Z}_{p^3q^2}$  do not have the PT property, see Proposition \ref{theorem-not1} and Proposition \ref{theorem-not2}. 

\begin{proposition}\label{theorem-not1}
  The group $\mathbb{Z}_{p^2q^2r}$  does not have the PT property, where $p,q,r$ are distinct primes.
\end{proposition}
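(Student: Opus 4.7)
The plan is to reduce the claim to an immediate application of Proposition~\ref{thm:UPTPEX2}, fed by the explicit non-UPT witness constructed for $\mathbb{Z}_{p^2q^2}$ in Proposition~\ref{PropP2Q2}. Since $p,q,r$ are distinct primes, $\gcd(p^2q^2,r)=1$, and the Chinese Remainder Theorem gives
\[
\mathbb{Z}_{p^2q^2r} \;\cong\; \mathbb{Z}_{p^2q^2}\times \mathbb{Z}_r,
\]
so it suffices to prove that the product on the right fails the PT property.

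First I would recall Proposition~\ref{PropP2Q2}: for $G=\mathbb{Z}_{p^2q^2}$ there exist a tile $\Omega$ and two distinct tiling complements $T_1,T_2\in\mathcal{T}_\Omega$ that share no common period, and moreover no periodic set lies in $\mathcal{T}_{T_1}\cap \mathcal{T}_{T_2}$. This is exactly the hypothesis required by Proposition~\ref{thm:UPTPEX2} with $n=2$.

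Next I would apply Proposition~\ref{thm:UPTPEX2} with $G=\mathbb{Z}_{p^2q^2}$, $n=2$, and $m=r$. Since $r$ is a prime distinct from $p$ and $q$, we have $r\geq 2=n$ and $\gcd(r,|G|)=\gcd(r,p^2q^2)=1$, so both numerical conditions are met. The conclusion of Proposition~\ref{thm:UPTPEX2} is that $\mathbb{Z}_{p^2q^2}\times \mathbb{Z}_r$ does not have the PT property, which combined with the displayed isomorphism yields the proposition.

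There is no real obstacle here: the heavy lifting is done in Proposition~\ref{thm:UPTPEX2}, whose proof explicitly builds, from a tile $\Omega$ of $G$ and $n$ tiling complements with no common period, a non-periodic tile of $G\times\mathbb{Z}_m$ whose only tiling complements are non-periodic. Proposition~\ref{PropP2Q2} supplies the required witness in $\mathbb{Z}_{p^2q^2}$, and the only numerical check, $r\geq 2$, is automatic because $r$ is prime. If one wished to make the counterexample in $\mathbb{Z}_{p^2q^2r}$ fully explicit, one would simply unpack the construction in Proposition~\ref{thm:UPTPEX2}: place translated copies of $T_1$ on one coset of $\mathbb{Z}_{p^2q^2}\times\{0\}$ and of $T_2$ on the remaining $r-1$ cosets inside $\mathbb{Z}_{p^2q^2}\times\mathbb{Z}_r$, and use $\Omega\times\{0\}$ as the complementary factor.
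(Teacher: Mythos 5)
Your proposal is correct and follows exactly the paper's own route: the paper proves this proposition in one line as ``a direct consequence of Proposition \ref{thm:UPTPEX2} and Proposition \ref{PropP2Q2}'', which is precisely the application (with $n=2$, $m=r$) that you spell out. The additional verification of the numerical hypotheses and the sketch of the explicit counterexample are consistent with the construction inside the proof of Proposition \ref{thm:UPTPEX2}.
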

\begin{proof}
It is a direct consequence of Proposition \ref{thm:UPTPEX2} and Proposition  \ref{PropP2Q2}.
\end{proof}

\begin{proposition}\label{theorem-not2}
	The group $\mathbb{Z}_{p^3q^2}$  does not have the PT property, where $p,q$ are distinct primes.
\end{proposition}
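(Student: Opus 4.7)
The plan is to construct a non-periodic tile $\widetilde T$ of $G=\mathbb{Z}_{p^3q^2}$ whose every tiling complement is non-periodic; then $(\widetilde T,\Omega)$ will witness the failure of PT. The key idea is to mix the two tilings $T_1,T_2$ from Proposition~\ref{PropP2Q2} across the $p$ cosets of the index-$p$ subgroup $H=\mathbb{Z}_{p^2q^2}$.

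Identify $H$ with the unique subgroup of index $p$ in $G$ and fix a coset representative $\alpha\in G\setminus H$, so $G=\bigsqcup_{i=0}^{p-1}(H+i\alpha)$. Let $\Omega$, $T_1=\langle pa\rangle+N_1$, $T_2=M_2+\langle qb\rangle$ be as in Proposition~\ref{PropP2Q2}. Note that $\mathrm{Per}(T_1)\supseteq\langle pa\rangle$ and $\mathrm{Per}(T_2)\supseteq\langle qb\rangle$, which in the cyclic group $G$ are the unique subgroups of orders $p$ and $q$, and $\mathrm{Per}(T_1)\cap\mathrm{Per}(T_2)=\{0\}$ by Proposition~\ref{PropP2Q2}. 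Pick any non-constant $\sigma\colon\{0,\ldots,p-1\}\to\{1,2\}$ and put
\[\widetilde T=\bigsqcup_{i=0}^{p-1}\bigl(T_{\sigma(i)}+i\alpha\bigr).\]
Since each $T_{\sigma(i)}$ tiles $H$ with $\Omega$, coset by coset one checks that $\Omega+\widetilde T=G$, so $(\widetilde T,\Omega)$ is a tiling pair of $G$.

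Next I will show $\widetilde T$ is non-periodic. A hypothetical period $g=h+k\alpha$ with $h\in H$ and $k\in\{0,\ldots,p-1\}$ must satisfy $T_{\sigma(i)}+h=T_{\sigma(i+k)}$ for every $i$, by matching the coset $H+i\alpha$ to $H+(i+k)\alpha$. If $\sigma(i)\ne\sigma(i+k)$ for some $i$, then $T_1$ would be a translate of $T_2$, whence $\mathrm{Per}(T_1)=\mathrm{Per}(T_2)$; combined with $\mathrm{Per}(T_1)\cap\mathrm{Per}(T_2)=\{0\}$ and $\mathrm{Per}(T_1)\ne\{0\}$ this is absurd. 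Hence $\sigma(i+k)=\sigma(i)$ for all $i$, so $\sigma$ is $k$-periodic on $\mathbb{Z}_p$; as $p$ is prime and $\sigma$ is non-constant, this forces $k=0$, after which $h\in\mathrm{Per}(T_1)\cap\mathrm{Per}(T_2)=\{0\}$, so $g=0$.

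Finally, suppose for contradiction that $\Omega^{\#}$ is a periodic tiling complement of $\widetilde T$. By Lemma~\ref{lem3p2}(d), $\mathrm{Per}(\Omega^{\#})\cap(\widetilde T-\widetilde T)=\{0\}$. Because $\sigma$ takes both values, $\widetilde T-\widetilde T$ contains both $\mathrm{Per}(T_1)$ and $\mathrm{Per}(T_2)$, hence the unique order-$p$ and order-$q$ subgroups of $G$. But every non-trivial subgroup of the cyclic group $\mathbb{Z}_{p^3q^2}$ contains one of these, forcing $\mathrm{Per}(\Omega^{\#})=\{0\}$, a contradiction. Thus $\widetilde T$ is a non-periodic tile with no periodic tiling complement, and $\mathbb{Z}_{p^3q^2}$ fails the PT property. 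The main obstacle I expect is the non-periodicity of $\widetilde T$: the mixing argument succeeds only because $T_1$ and $T_2$ have distinct prime-order period groups (so they are not translates of one another) and because $p$ is prime (so a non-constant $\sigma$ has no non-trivial period on $\mathbb{Z}_p$).
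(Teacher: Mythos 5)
Your proposal is correct, and it reaches the conclusion by a genuinely different route from the paper. The paper's proof is a self-contained explicit construction: it writes $\mathbb{Z}_{p^3q^2}=\langle a,b\rangle$, takes the tile $\Omega=A+B$ with $A=\{ipa\}$ and $B=\{ib\}$, assembles the complement $T=(M_1+N)\cup(M+N_1+D_1)$ from six auxiliary blocks, verifies $\Omega+T=G$ by a chain of set identities, and then checks directly that $T$ is non-periodic and that no periodic set can replace $\Omega$. You instead import Proposition~\ref{PropP2Q2} wholesale and run a coset-mixing construction over the index-$p$ subgroup $H\cong\mathbb{Z}_{p^2q^2}$: distributing $T_1$ and $T_2$ non-constantly over the $p$ cosets of $H$ yields a tile $\widetilde T$ whose non-periodicity follows from $T_1$ and $T_2$ having distinct nontrivial period groups (so neither is a translate of the other, and a non-constant $\sigma$ on $\mathbb{Z}_p$ has no nontrivial period), and whose lack of any periodic complement follows from $\widetilde T-\widetilde T$ containing both the order-$p$ and order-$q$ subgroups of $G$ together with Lemma~\ref{lem3p2}(d). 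This is essentially the mechanism of Proposition~\ref{thm:UPTPEX2} transplanted from a coprime factor $\mathbb{Z}_m$ to the cosets of an index-$p$ subgroup, and it has the virtue of making transparent that the obstruction is precisely the failure of the UPT property in $\mathbb{Z}_{p^2q^2}$; the paper's version buys explicitness at the cost of an opaque verification. One small imprecision to note: when a putative period $g=h+k\alpha$ shifts the coset $H+i\alpha$ with $i+k\ge p$, the translate carrying $T_{\sigma(i)}$ onto $T_{\sigma(i+k \bmod p)}$ is $h+p\alpha$ rather than $h$; since $p\alpha\in H$ and your argument only uses that $T_{\sigma(i)}$ and $T_{\sigma(i+k)}$ are $H$-translates of one another, nothing breaks, but the displayed identity $T_{\sigma(i)}+h=T_{\sigma(i+k)}$ should carry this correction term.
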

\begin{proof}
	Let $a,b\in\mathbb{Z}_{p^3q^2}$ with ${\rm ord}(a)=p^3$ and ${\rm ord}(b)=q^2$. Then $\mathbb{Z}_{p^3q^2}=\langle a,b\rangle$. 
Let 
\[A= \{ipa: i\in\{0,\cdots, p-1\}\}, \quad B=\{ib: i\in\{0,\cdots, q-1\}\}, \quad \Omega=A+B.\]
Define
	\begin{align*}
	    &M=\{ip^2a: i\in \{0,\cdots, p-1\}\},  \quad M_1=\{ip^2a: i\in\{0,\cdots, p-2\}\}\cup\{(p-1)p^2a+b\},\\
		&N=\{iqb: i\in \{0,\cdots, q-1\}\}, \quad  N_1=\{iqb: i\in \{0,\cdots, q-2\}\}\cup\{(q-1)qb+pa\},\\
		&D=\{ia: i \in \{0,\cdots, p-1\}\}, \quad D_1=\{ia: i\in \{1,\cdots, p-1\}\}.
	\end{align*}

Let $T=(M_1+N)\cup(M+N_1+D_1)$. 
Note that  
\[(A+B)+(M_1+N)=A+M_1+\langle b \rangle=A+\langle p^2a, b \rangle= \langle pa, b \rangle\] 
and 
\[(A+B)+(M+N_1)=B+N_1+\langle pa \rangle=B+\langle pa, qb \rangle= \langle pa, b \rangle.\] 
Then we have
	\begin{align*}
		\Omega+T&=(A+B)+((M_1+N)\cup(M+N_1+D_1))\\
		&=(A+B+M_1+N)\cup(A+B+M+N_1+D_1)\\
		&= \langle pa,b\rangle\cup(\langle pa,b\rangle+D_1)\\
		&=\langle pa,b\rangle+D\\
		&=G.          
	\end{align*}

One can check that $T$ and $\Omega$ are non-periodic. If $\Omega$ could be replaced by a periodic set $\Omega'$ with period $p^2a$, then $p^2a \in \Omega' - \Omega'$. Note that $p^2a \in M - M \subseteq T - T$, so $p^2a \in (T - T) \cap (\Omega' - \Omega')$, which contradicts $G = \Omega' + T$. Similarly, $\Omega$ cannot be replaced by a periodic set of period $qb$. Hence, $\mathbb{Z}_{p^3q^2}$  does not have the PT property.
\end{proof}

\subsection{Proof of Theorem \ref{thm-noncyc}} 
Theorem  \ref{thm-noncyc} follows from  Theorems \ref{thm:UPTPEX}, \ref{theorem-subgroup1}, \ref{theorem-subgroup2}  and Propositions \ref{prop-UPT1}, \ref{pro6}, \ref{prop15}, \ref{prop17}.

\subsection{Proof of Theorem \ref{thm-nonQG}}
In this subsection, we show that any group containing either $\mathbb{Z}_{p^2}\times \mathbb{Z}_{p^2}$ or $\mathbb{Z}_{p^2q^2}$ as a proper subgroup typically does not possess the PT property, apart from a few exceptional cases.
\begin{proposition}\label{prop--1}
  The group $\mathbb{Z}_{p^2}^{2}\times\mathbb{Z}_q$ does not have the PT  property, where $p,q$  are distinct primes and $q\geq 3$.
\end{proposition}
\begin{proof}
It is a direct consequence of Proposition \ref{thm:UPTPEX2} and Proposition  \ref{prop:p2p2}.
\end{proof}
\begin{proposition}\label{prop--2}
  The group $\mathbb{Z}_{p^3}\times\mathbb{Z}_{p^2}$ does not have the PT property, where $p$ is an odd prime.
\end{proposition}
\begin{proof}
 Let 
\[
\Omega = \bigcup_{i=0}^{p-1} \{(ip, ip), (ip, ip+1), \dots, (ip, ip+p-1)\}
\]
and let
\begin{align*}
&T_0 = \langle (p^2,1) \rangle,\\
&T_1 = \langle (0,p) \rangle + \langle (p^2,0) \rangle,\\
&T_j = \langle (p,0) \rangle,   \quad 2\leq j \leq p-1.
\end{align*}
Let 
 \[ T=\bigcup_{j=0}^{p-1}T_j+(j,0).\]
One can check that
\[
\Omega + T = \mathbb{Z}_{p^3} \times \mathbb{Z}_{p^2}.
\]
Note that \(T_0\) has period \((0,p)\), \(T_1\) has periods \((ip^2,jp)\) for \((i,j) \ne (0,0)\), and \(T_j\) has period \((p^2,0)\) for  $2\leq j \leq p-1$. Hence, $T_0, T_1, \cdots, T_{p-1}$ do not have a common period, which implies that $T$ is not periodic.

Now we show that $\Omega $ can not be replaced by a periodic set \(\Omega'\). Assume that \(\Omega'\) is a periodic set  such that \(\Omega' + T = \mathbb{Z}_{p^3} \times \mathbb{Z}_{p^2}\). Assume that \(\Omega' = \Omega' + (\alpha, \beta)\) for \((\alpha, \beta) \neq (0, 0)\). If \(\mathrm{ord}(\alpha, \beta) = p\), then
\[
(\alpha, \beta) \in (T_1 - T_1) \cap (\Omega' - \Omega'),
\]
leads to a contradiction. If \(\mathrm{ord}(\alpha, \beta) = p^2\), then \((p\alpha, p\beta)\) is a period of \(\Omega'\), which also leads to a contradiction.
\end{proof}

\begin{lemma}\label{lemma---p}
			Let $p$ be a prime, and let $H$ be a finite abelian group with $|H|\ge 4$, $p\mid |H|$, and $H\ne \mathbb{Z}_{2}^{2}$. Let $g$ be an element of order $p$ in $H$, and let $C_p$ denote the set of all elements of order $p$ in $H$. Define $A = H \setminus \{0,g\}$. Then $C_p \subseteq A-A$.
\end{lemma}
		
		\begin{proof}
			Let $h \in C_p$.  
			
			If $|H|\ge 5$, then $|A|=|H|-2$ and $|A+h|=|H|-2$, both strictly greater than $\tfrac{|H|}{2}$. Hence $A\cap(A+h)\ne \emptyset$, which implies $h\in A-A$.  
			
			If $|H|=4$, then the assumptions $p\mid |H|$ and $H\ne \mathbb{Z}_{2}^{2}$ force $H\cong \mathbb{Z}_4$. In this case $A=\{1,3\}$, so $2\in A-A$.  
		\end{proof}

		\begin{proposition}\label{prop--3}
			Let $p$ be a prime, and let 
			\[
			G=\mathbb{Z}_{p^2a}\times\mathbb{Z}_{p^2b}\times H,
			\]
			with $ab|H|\ge 4$. Assume further that if $p=2$ and $ab=1$, then $H\ne\mathbb{Z}_{2}^{2}$.  
			Then $G$ does not have the PT property.
		\end{proposition}
		
		\begin{proof}
			Define
			\[
			\Omega = \bigcup_{i=0}^{p-1} \{(ia, ipb,0), (ia, (ip+1)b,0), \dots, (ia, (ip+p-1)b,0)\},
			\]
			and
			\begin{align*}
				T_0 &= \langle (0,pb,0) \rangle + \{(0,0,0), (pa,b,0), \dots, ((p-1)pa, (p-1)b,0)\},\\
				T_1' &= \langle (0,pb,0) \rangle + \langle (pa,0,0) \rangle,\\
				T_2' &= \langle (pa,0,0) \rangle + \{(0,0,0), (a,0,0), \dots, ((p-1)a,0,0)\}.
			\end{align*}
			Let 
			\[
			C=\{0,1,\dots,a-1\}\times\{0,1,\dots,b-1\}\times H,
			\]
			which is a complete system of representatives of $G$ modulo $a\mathbb{Z}_{p^2a}\times b\mathbb{Z}_{p^2b}$.  
			Since $ab|H|\ge 4$, and if $p=2$ and $ab=1$ then $H\ne\mathbb{Z}_{2}^{2}$, we can choose $c$ as follows:
			
			If $|H|\ge 4$, then by Lemma~\ref{lemma---p} there exists $h\in H$ such that $(H\setminus\{0,h\})-(H\setminus\{0,h\})$ contains all order-$p$ elements of $H$. Take $c=(0,0,h)$.  
			
			If $|H|<4$ and $a>1$, take $c=(1,0,0)$.  
			
			If $|H|<4$, $a=1$, and $b>1$, take $c=(0,1,0)$.  
			
			Now define
			\begin{align*}
				T_1 &= T_1' + (C\setminus\{(0,0,0),c\}),\\
				T_2 &= T_2' + c,\\
				T &= T_0 \cup T_1 \cup T_2.
			\end{align*}
			
			It is straightforward to verify that  
			\[
			\Omega + T = G.
			\]
			
			Observe that $T_0$ has period $(0,pb,0)$, $T_1'$ has periods $(pai,pbj,0)$ with $(i,j)\ne (0,0)$, and $T_2'$ has period $(pa,0,0)$.  
			Since these three sets do not share a common period, $T$ is not periodic.
			
			\medskip
			We next show that $\Omega$ cannot be replaced by a periodic set $\Omega'$.  
			Suppose $\Omega'$ is periodic and satisfies $\Omega' + T = G$.  
			Assume $\Omega' = \Omega' + (\alpha,\beta,\gamma)$ for some $(\alpha,\beta,\gamma)\ne (0,0,0)$.  
			Since $|\Omega|=p^2$, we must have $\mathrm{ord}(\alpha,\beta,\gamma)\in\{p, p^2\}$.
			
			If $\mathrm{ord}(\alpha,\beta,\gamma)=p$, then
			\[
			(\alpha,\beta,\gamma)\in (T_1-T_1)\cap(\Omega'-\Omega'),
			\]
			a contradiction.  
			
			If $\mathrm{ord}(\alpha,\beta,\gamma)=p^2$, then $(p\alpha,p\beta,p\gamma)$ is a period of $\Omega'$, which again leads to a contradiction.
			
			Hence, $\Omega$ cannot be periodic, and $G$ fails to have the PT property.
		\end{proof}
		
		\begin{proposition}\label{prop:p2q2npt}Let  $p<q$ be distinct primes, and let $H$ be a finite abelian group with $|H|\geq 2$, and $|H|\geq 3$ if $p=2$. Then the group $\Z_{p^2q^2}\times H$ does not have the PT property.
 		\end{proposition}
		
		\begin{proof}
		Let $a,b\in\mathbb{Z}_{p^2q^2}\times\{0\}$ with $\text{ord}(a)=p^2$ and $\text{ord}(b)=q^2$. Define
\[A=\{ia:i\in\{0,\cdots, p-1\}\},\quad B=\{ib:i\in\{0,\cdots, q-1\}\},\quad \Omega=A+B.\]

Now set $T_1=M_1+N_1$, where
\begin{align*}
	&M_1=\{ipa: i\in\{0,\cdots, p-1\}\},\\
	&N_1=\{iqb: i\in\{0,\cdots, q-2\}\}\cup\{(q-1)qb+a\}, 
\end{align*}
and set $T_2=M_2+N_2$, where
\begin{align*}
	&M_2=\{ipa: i\in\{0,\cdots, p-1\}\}\cup\{(p-1)pa+b\},\\
	&N_2=\{iqb: i\in\{0,\cdots, q-1\}\}.		
\end{align*}

Let $C_p'$ denote the set of elements of order $p$ in $H$. 
If $|C_p'|\le1$, define $C_p:=C_p'\cup\{0\}$; otherwise, let $C_p=C_p'$.
Now define
\begin{align*}
	T_1 &= T_1' +(\cup_{c\in C_p}(0,c)),\\
	T_2 &= T_2' + (\cup_{c\in (H\backslash C_p)}(0,c)),\\
	T &= T_1 \cup T_2.
\end{align*}

Observe that
\begin{align*}
	\Omega+T_1'&=(A+B)+(M_1+N_1)\\
	&=\langle a\rangle+B+N_1\\
	&=\langle a\rangle+B+N_2\\
	&=\mathbb{Z}_{p^2q^2}\times\{0\}.         
\end{align*}
Similarly,  $\Omega+T_2'=\mathbb{Z}_{p^2q^2}\times\{0\}$.  Then it is easy to see that  
\[
\Omega + T = G.
\]

Note that $T_1'+qb\neq T_1'$ and $T_2'+pa\neq T_2'$. Thus $T_1'$ and $T_2'$ do not share a common period, and consequently $T$ is non-periodic.

\medskip
We now show that $\Omega$ cannot be replaced by a periodic set $\Omega'$.  
Suppose $\Omega'$ is periodic and satisfies $\Omega' + T = G$.  
Assume $\Omega' = \Omega' + (\alpha,\beta)$ for some $(\alpha,\beta)\ne (0,0)$.  
Since $|\Omega|=pq$, we must have $\mathrm{ord}(\alpha,\beta)\in\{p,q,pq\}$.

If $\mathrm{ord}(\alpha,\beta)=p$, then
\[
(\alpha,\beta)\in (T_1-T_1)\cap(\Omega'-\Omega'),
\]
a contradiction.  

If $\mathrm{ord}(\alpha,\beta)=q$, then
\[
(\alpha,\beta)\in (T_2-T_2)\cap(\Omega'-\Omega'),
\]
a contradiction.  

If $\mathrm{ord}(\alpha,\beta)=pq$, then $(p\alpha,p\beta)$ is a period of $\Omega'$, again leading to a contradiction.

Hence, $\Omega$ cannot be periodic, and therefore $G$ does not have the PT property.
\end{proof}

\begin{proof}[Proof of Theorem \ref{thm-nonQG}]
	Theorem \ref{thm-nonQG} follows from Theorem  \ref{theorem-subgroup1} and  Propositions \ref{prop--1}, \ref{prop--2}, \ref{prop--3} and  \ref{prop:p2q2npt}.
\end{proof}


\section{Rédei property and its implication to PT under subgroup conditions}\label{sec-Red-PT}
This section investigates the interplay between the Rédei property and the PT property.
\begin{proof}[Proof of Theorem \ref{thm:redei}]
	Assume that $\Omega$ tiles $G$ and is not periodic.  
We consider two cases.  

\textbf{(1)} $\langle \Omega \rangle \neq G$.  In this case, suppose $\Omega \subset H$ for some proper subgroup $H \subset G$, and write  
\[
G = \bigcup_{i=1}^{k} (x_i + H),
\]
where $x_1, \dots, x_k \in G$ and the cosets $x_i + H$ are disjoint.  
Since $H$ has the UPT property, there exists a periodic set $T' \subset H$ such that  
\[
H = \Omega + T'.
\]
It follows that $(\Omega,\, \bigcup_{i=1}^{k} (x_i + T'))$ is a tiling pair of $G$.

\medskip
\textbf{(2)} $\langle \Omega \rangle = G$.  Since $G$ has the weak Rédei property, there exists a tiling complement $T$ of $\Omega$ which is contained in some proper subgroup $H \subset G$.  
Similarly, write  
\[
G = \bigcup_{i=1}^{k} (x_i + H),
\]
where $x_1, \dots, x_k \in G$ and the cosets $x_i + H$ are disjoint.  
Define  
\[
\Omega_i = (\Omega \cap (x_i + H)) - x_i, \quad i = 1, \dots, k.
\]
It is clear that each $(\Omega_i, T)$ forms a tiling pair of $H$.  
Since $H$ has the UPT property, either  
\begin{itemize}
    \item all $\Omega_i$ are periodic with the same period, in which case $\Omega$ is periodic (contradiction), or  
    \item $T$ can be replaced by a periodic $T' \subset H$ such that each $(\Omega_i, T')$ is a tiling pair of $H$, which in turn implies that $(\Omega, T')$ is a tiling pair of $G$.
\end{itemize}
\end{proof}

To establish Theorem~\ref{thm:84442}, we first require two auxiliary lemmas.  
They assert that two periodic subsets of size two or four in $\Z_4 \times \Z_4$, even when not sharing a common period, still admit a common periodic tiling complement.

\begin{lemma}\label{lem:Z4Z42}
Let $A,B \subset \Z_4 \times \Z_4$ with $|A| = |B| = 2$.  
Assume that both $A$ and $B$ are periodic but have no common period.  
Then there exists a periodic set $T$ such that both $(A,T)$ and $(B,T)$ are tiling pairs.
\end{lemma}

\begin{proof}
Let $a,b$ be generators of $\Z_4 \times \Z_4$.  
Without loss of generality, suppose $A + 2a = A$ and $B + 2b = B$.  
Thus,
\[
A = \{0,2a\}, 
\qquad 
B = \{0,2b\}.
\]

Define
\[
T = \langle 2(a+b) \rangle + \{e,\, a,\, b,\, a+b\}.
\]
It is straightforward to verify that both $(A,T)$ and $(B,T)$ form tiling pairs.
\end{proof}

\begin{lemma}\label{lem:Z4Z44}
Let $A,B \subset \Z_4 \times \Z_4$ with $|A| = |B| = 4$.  
Assume that both $A$ and $B$ are periodic but have no common period.  
Then there exists a periodic set $T$ such that both $(A,T)$ and $(B,T)$ are tiling pairs.
\end{lemma}

\begin{proof}
Let $a,b$ be generators of $\Z_4 \times \Z_4$.  
Without loss of generality, suppose $A + 2a = A$ and $B + 2b = B$.  
Thus,
\[
A = \langle 2a \rangle + \{e,\, x_1 a + y_1 b\}, 
\qquad 
B = \langle 2b \rangle + \{e,\, x_2 a + y_2 b\},
\]
with $(x_1,y_1) \not\equiv (0,0) \pmod{2}$ and $(x_2,y_2) \not\equiv (0,0) \pmod{2}$.

If $(x_1,y_1) \not\equiv (x_2,y_2) \pmod{2}$, set $(\alpha,\beta) = (x_1+x_2,\, y_1+y_2)$.  
Otherwise, choose $(\alpha,\beta) \in \{(0,1),\, (1,0),\, (1,1)\}$ such that $(\alpha,\beta) \not\equiv (x_1,y_1) \pmod{2}$.

Define
\[
T = \langle 2(a+b) \rangle + \{e,\, \alpha a + \beta b\}.
\]
Again, it can be checked directly that both $(A,T)$ and $(B,T)$ are tiling pairs.
\end{proof}

\begin{proof}[Proof of Theorem \ref{thm:84442}]
Suppose $e \in \Omega \subset G$ is a non-periodic tile of $G$.  
Note first that every proper subgroup of $G$ has the PT property.  
Thus, if $\langle \Omega \rangle \neq G$, then $\Omega$ already admits a periodic tiling complement.  
In particular, if $|\Omega| = 2$, then $\langle \Omega \rangle \neq G$.  
Hence, it suffices to consider the cases $|\Omega| \geq 4$ with $\langle \Omega \rangle = G$.  
We distinguish three cases according to $|\Omega|$.

\medskip
\noindent\textbf{Case (1): $|\Omega| = 4$.}  
Let $T$ be a tiling complement of $\Omega$.  
Since $G$ has the Rédei property, $T$ must lie in a proper subgroup $H \subset G$.  

If $H \ncong \Z_4 \times \Z_4$, then $H$ has the UPT property.  
By the same reasoning as in Case~(2) of Theorem~\ref{thm:redei}, $T$ may be replaced by a periodic tiling complement $T'$.  

Now suppose $H \cong \Z_4 \times \Z_4$.  
If $T$ is periodic, we are done.  
Otherwise, write
\[
G = \bigcup_{i=1}^{2} (x_i + H),
\]
with $x_1, x_2 \in G$ distinct coset representatives, and set
\[
\Omega_i = (\Omega \cap (x_i + H)) - x_i, \quad i = 1,2.
\]
Then each $(\Omega_i, T)$ is a tiling pair of $H$.  
Since $\Z_4 \times \Z_4$ has the Haj\'os property, both $\Omega_1$ and $\Omega_2$ are periodic.  
As $\Omega$ is assumed non-periodic, the two sets must have different periods.  
By Lemma~\ref{lem:Z4Z42}, $T$ can be replaced by a periodic $T'$.  
Thus $\Omega$ has a periodic tiling complement in $G$.

\medskip
\noindent\textbf{Case (2): $|\Omega| = 8$.}  
Let $T$ be a tiling complement of $\Omega$.  
By the Rédei property, $T$ lies in a proper subgroup $H \subset G$.  

If $H \ncong \Z_4 \times \Z_4$, then $H$ has the UPT property, and the same argument as above shows that $T$ may be replaced by a periodic tiling complement.  

If $H \cong \Z_4 \times \Z_4$, then if $T$ is periodic we are done.  
Otherwise, write
\[
G = \bigcup_{i=1}^{2} (x_i + H),
\]
and define $\Omega_i$ as before.  
Each $(\Omega_i, T)$ is a tiling pair of $H$.  
By the Haj\'os property, $\Omega_1$ and $\Omega_2$ are periodic with distinct periods, since $\Omega$ itself is assumed non-periodic.  
By Lemma~\ref{lem:Z4Z44}, $T$ can be replaced by a periodic $T'$.  
Thus $\Omega$ again has a periodic tiling complement in $G$.

\medskip
\noindent\textbf{Case (3): $|\Omega| = 16$.}  
Here $T = \{e,t\}$ must be a tiling complement of $\Omega$.  
Note that $\Omega + 2t = \Omega$.  
If $2t = e$, then $T$ is periodic.  
If $2t \neq e$, then $\Omega$ itself is periodic, contradicting the assumption.  

\medskip
In all cases, $\Omega$ admits a periodic tiling complement.  
This completes the proof.
\end{proof}

\section{Ascending chain structure of tiles}\label{sec-structure}
For tiles in a  group with the PT property, their  structures can be characterized by induction.
Let $A,B$ be two subsets of a finite abelian group $G$ such that $0\in A \cap B$. We define a subset $A\circ_\phi B$ by
\[\{\phi(b)+b\},\]
where $\phi$ is a certain function from $B$ to $A$ with $\phi(0)=0$.
We say that a set $E\subset G$ has  {\bf ascending chain structure} if  there exists a strictly ascending chain of subgroups $0\subset H_1\subset\cdots\subset H_m=G$ and $0\in D_{j}$ ($j=1,\dots,m-1$) is a complete set of coset representatives for $H_{j+1}$ modulo $H_j$, such that \[E=H_1+(D_1\circ_{\phi_1}(D_2+(\cdots\{0\})))\] or \[E=H_1\circ_{\phi_1}(D_1+(D_2\circ_{\phi_2}(\cdots\{0\})))\]
for some function $\phi_i$.

\begin{proof}[Proof of Theorem \ref{theorem-structure}]
	We first assume that $G$ has the PT property and proceed by induction. Suppose the statement holds for all proper subgroups of $G$.  Let $\Omega$ be a tile of $G$.

	If $\Omega$ is periodic, then there exists a subset $\Omega_1\subset\Omega$ and  a subgroup $H_1$ such that $\Omega=\Omega_1+H_1$. Let $\overline{g}=g+H_1$ and $\overline{S}=\{g+H_1: g\in S\}$. Then $\overline{\Omega_1}$ tiles $G/H_1.$ By the induction hypothesis, there exists a strictly ascending chain of subgroups \[\overline{0}\subset \overline{H_2}\subset\cdots\subset \overline{H_m}=G/H_1\] and $0\in E_j$ ($j=2,\dots,m-1$) is a complete set of coset representatives for $\overline{H_{j+1}}$ modulo $\overline{H_j}$, such that \[\overline{\Omega_1}=\overline{H_2}+(E_2\circ_{\phi_1}(E_3+(\cdots\{\overline{0}\})))\] or \[\overline{\Omega_1}=\overline{H_2}\circ_{\phi_1}(E_2+(E_3\circ_{\phi_2}(\cdots\{\overline{0}\})))\]
	for some function $\phi_i$. Lifting back to $G$, there exist subsets $D_j$ of $G$ with $\overline{D_j}=E_j$ and $\overline{D_1}=\overline{H_2}$, such that \[\Omega=H_1+\Omega_1=(H_1+D_1)+(D_2\circ_{\phi_1}(D_3+(\cdots\{\overline{0}\})))\] or \[\Omega=H_1+\Omega_1=H_1+(D_1\circ_{\phi_1}(D_2+(D_3\circ_{\phi_2}(\cdots\{\overline{0}\})))),\] where $H_1+D_1$ forms a subgroup.
	
	If $\Omega$ is not periodic, then there exists a periodic set $T$ such that $G=\Omega+T$. Write $T=T_1+H_1$ with 
 $T_1\subset T$ and  $H_1\le G$. Then $\overline{\Omega}$ tiles $G/H_1$. By induction, we again obtain a strictly ascending chain of subgroups \[\overline{0}\subset \overline{H_2}\subset\cdots\subset \overline{H_m}=G/H_1\] and $0\in E_j$ ($j=2,\dots,m-1$) is a complete set of coset representatives for $\overline{H_{j+1}}$ modulo $\overline{H_j}$, such that \[\overline{\Omega}=\overline{H_2}+(E_2\circ_{\phi_1}(E_3+(\cdots\{\overline{0}\})))\] or \[\overline{\Omega}=\overline{H_2}\circ_{\phi_1}(E_2+(E_3\circ_{\phi_2}(\cdots\{\overline{0}\})))\] 
	for some function $\phi_i$. Lifting to $G$, there exists subsets $D_j$ of $G$ with $\overline{D_j}=E_j$ and $\overline{D_1}=\overline{H_2}$, such that \[\Omega=H_1\circ_{\phi}(D_1+(D_2\circ_{\phi_1}(D_3+(\cdots\{\overline{0}\}))))\] or \[\Omega=H_1\circ_{\phi}(D_1\circ_{\phi_1}(D_2+(D_3\circ_{\phi_2}(\cdots\{\overline{0}\}))))=H_2\circ_{\phi_1}(D_2+(D_3\circ_{\phi_2}(\cdots\{\overline{0}\}))).\]
	
	Now we assume that for any factorization $G=\Omega+T$, there exists a strictly ascending chain of subgroups $0\subset H_1\subset\cdots\subset H_m=G$ and $0\in D_{j}$ ($j=1,\dots,m-1$) is a complete set of coset representatives for $H_{j+1}$ modulo $H_j$, such that \[\Omega=H_1+(D_1\circ_{\phi_1}(D_2+(\cdots\{0\})))\] or \[\Omega=H_1\circ_{\phi_1}(D_1+(D_2\circ_{\phi_2}(\cdots\{0\})))\]
	for some function $\phi_i$.
	If $\Omega$ tiles $G$ and $\Omega$ is non-periodic, then $\Omega$ has the form \[\Omega=H_1\circ_{\phi_1}(D_1+(D_2\circ_{\phi_2}(\cdots\{0\}))).\] Define
	\[T=H_1+(D_1\circ_{\phi_1}(D_2+(\cdots\{0\}))).\]
	It is straightforward to verify that $G=\Omega+T$, and moreover $T$ is periodic. Hence $G$ possesses the PT property.
\end{proof}

As a consequence of Proposition \ref{theorem-structure}, we can characterize the tiles in elementary $p$-groups having the PT property.
\begin{proof}[Proof of Theorem \ref{cor-finitgroup}]
	Assume  $\mathbb{Z}_{p}^n$  has the  PT property.  Then by Proposition~\ref{theorem-structure}, there exists a strictly ascending chain of subgroups $0\subset H_1\subset\cdots\subset H_m=\mathbb{Z}_{p}^n$ and $0\in D_{j}$ ($j=1,\dots,m-1$) is a complete set of coset representatives for $H_{j+1}$ modulo $H_j$ such that $\Omega=H_1\circ_{\phi_1}(D_1+(D_2\circ_{\phi_2}(\cdots\{0\})))$ or $\Omega=H_1+(D_1\circ_{\phi_1}(D_2+(\cdots\{0\})))$ for some function $\phi_i$.
	Then there exist $a_1,a_2,\dots,a_n\in\mathbb{Z}_{p}^n$ such that $H_i=\langle a_1,\dots,a_{s_i}\rangle$ and $s_m=n$. Note that the complete set of coset representatives for $H_{j+1}$ modulo $H_j$ has the form $D_j=\langle a_1,\dots,a_{s_j}\rangle\circ_{\phi_j}\langle a_{s_j+1},\dots,a_{s_{j+1}}\rangle$ for some function $\phi_j$. Hence $\Omega$ has the following form
	\[\Omega=\langle a_1,\dots,a_{s_1}\rangle\circ_{\phi_1}(\langle a_{s_1+1},\dots,a_{s_{2}}\rangle+((\langle a_1,\dots,a_{s_2}\rangle\circ_{\phi_2}\langle a_{s_2+1},\dots,a_{s_{3}}\rangle)\circ_{\psi_1}(\cdots\{0\})))\]
	or
	\[\Omega=\langle a_1,\dots,a_{s_1}\rangle+(\langle a_{s_1+1},\dots,a_{s_{2}}\rangle\circ_{\psi_1}((\langle a_1,\dots,a_{s_2}\rangle\circ_{\phi_1}\langle a_{s_2+1},\dots,a_{s_{3}}\rangle)+(\cdots\{0\})))\]
	for some functions $\phi_i$ and $\psi_i$.
	For the first case, define
	\[T=\langle a_1,\dots,a_{s_1}\rangle+\langle a_{s_2+1},\dots,a_{s_{3}}\rangle+\langle a_{s_4+1},\dots,a_{s_{5}}\rangle+\dots.\]
	For the second case, define
	\[T=\langle a_{s_1+1},\dots,a_{s_2}\rangle+\langle a_{s_3+1},\dots,a_{s_{4}}\rangle+\langle a_{s_5+1},\dots,a_{s_{6}}\rangle+\dots.\]
	Then it is easy to check that $\mathbb{Z}_{p}^n=\Omega+T$ and $T$ is a subgroup of $\mathbb{Z}_{p}^n$.
\end{proof}

\section{PT property implies ``Tile $\Longrightarrow$ Spectral"}\label{sec-PT--TS}
In this section, we prove that  `$T-S$' holds in a group   if  the group and all its subgroups have the PT property.

\begin{proof}[Proof of Theorem~\ref{PT--TS}]
The proof follows an inductive approach. Assuming the statement holds for all subgroups of $G$, let $(\Omega,T)$ be a tiling pair of group $G$. We distinguish between two cases: (1)  $\Omega$ is periodic, and (2) $\Omega$ is not periodic.

{\bf(1) The tile  $\Omega$ is periodic.} Hence,  $\Omega=\Omega+g$, for some $g\in G\setminus\{0\}$. Without loss of generality,  suppose that the order  ${\rm ord}(g)=p$ is a prime number. 
Write  $G=H\times\mathbb{Z}_{p^n}$ with $g=(0,p^{n-1})$ for some $n\geq 1$.  Then, all elements of $G$ can be represented by $(h,t)$, where $h\in H$ and $t\in\mathbb{Z}_{p^n}$, and all characters of $G$ can be represented by $\chi\psi$, where $\chi\in\widehat{H}$ and $\psi\in\widehat{\mathbb{Z}_{p^n}}$.

 Write $\Omega=\Omega'+\{0\}\times p^{n-1}\mathbb{Z}_{p^n}$, for some $\Omega' \subset H\times\mathbb{Z}_{p^{n-1}}\cong H\times\{0,1,\dots,p^{n-1}-1\}$. Then, \[ H\times\mathbb{Z}_{p^{n-1}}=\Omega'+T',\] where $T'=\{(h,t'): (h,t)\in T, t\equiv t'\pmod{p^{n-1}}\}$. Since $G$ and all its subgroups have the PT property, then there exists $\Gamma\subset H\times\mathbb{Z}_{p^{n-1}}$ such that $(\Omega',\Gamma)$ forms a spectral pair. Define 
 \[\Lambda=\{(h,pt+s): (h,t)\in\Gamma, s\in\mathbb{Z}_p\}.\] Then $|\Lambda|=p|\Gamma|$. For two distinct  $(h,pt+s),(h',pt'+s')\in\Lambda$, we have  
 \[\chi_{h-h'}\psi_{pt-pt'+s-s'}(\Omega'+\{0\}\times p^{n-1}\mathbb{Z}_{p^n})=\chi_{h-h'}\psi_{pt-pt'+s-s'}(\Omega')\psi_{pt-pt'+s-s'}(p^{n-1}\mathbb{Z}_{p^n})=0.\]
  which implies $(\Omega,\Lambda)$ forms a spectral pair of $G$.

{\bf(2) The tile  $\Omega$ is not periodic.} 
Since $G$ has the PT property, the tiling complement $T$ can be chosen to be periodic. Similarly, we can write $G = H \times \mathbb{Z}_{p^n}$ for some prime $p$ and some positive integer $n$ and $T = T' + \{0\} \times \{0,p^{n-1},\cdots, (p-1) p^{n-1}\}$ for some $T' \subset H \times \{0,1,\dots,p^{n-1}-1\}.$

 All characters of $G$ can be represented by $\chi\psi$, where $\chi\in\widehat{H}$ and $\psi\in\widehat{\mathbb{Z}_{p^n}}$.
 Let  \[\Omega'=\{(h,t): (h,t')\in \Omega, t\equiv t'\pmod{p^{n-1}}\}.\] It follows that $|\Omega'|=|\Omega|$ and \[H\times\{0,1,\dots,p^{n-1}-1\}=\Omega'+T'.\]
 Hence, there exists $\Gamma\subset H\times\mathbb{Z}_{p^{n-1}}$ such that $(\Omega',\Gamma)$ forms a spectral pair in group $H\times\mathbb{Z}_{p^{n-1}}$. For any distinct  $(h,t),(h',t')\in\Gamma$, $\chi_{h-h'}\psi_{pt-pt'}(\Omega)=\chi_{h-h'}\psi_{pt-pt'}(\Omega')=0$.  Let $\Lambda=\{(h,pt)\in G: (h,t)\in\Gamma\}.$
    Then, $(\Omega, \Lambda)$ forms a spectral pair in $G$.
\end{proof}

\section{Discussion and questions}\label{sec-con}
In this paper, we introduce the PT property for finite abelian groups and investigate which groups possess it. We completely classify the cyclic groups with the PT property. In addition, we identify certain non-cyclic finite abelian groups that do have the PT property, as well as others that do not. An important application of the PT property is that it implies the implication ``Tile $\Longrightarrow$ Spectral". Consequently, determining the full list of groups with the PT property is a problem of significant interest.

Based on Theorems~\ref{thm-cyc}, \ref{thm-noncyc}, \ref{thm-nonQG} and \ref{thm:redei}, it remains unresolved whether the following groups possess the PT property:
\begin{enumerate}
	\item 
	$\mathbb{Z}_{p_1}^{l_1}\times\mathbb{Z}_{p_2}^{l_2}\times\dots\times\mathbb{Z}_{p_k}^{l_k}$, $p_1=2$, $l_1=6$, $l_2\ge1$; or $p_1=2$, $l_1\ge7$; or $p_1\ge3$, $l_1\ge4$; or $l_2\ge2$,
	\item $\mathbb{Z}_{p^t}\times \mathbb{Z}_{p}^s\times\mathbb{Z}_{p_1}^{l_1}\times\mathbb{Z}_{p_2}^{l_2}\times\dots\times\mathbb{Z}_{p_k}^{l_k}$, $s\le1$, $l_1\ge2$; or $s\ge2$,
	\item $\mathbb{Z}_4\times\mathbb{Z}_2\times\mathbb{Z}_{q^2}$,
	\item $\mathbb{Z}_{q^2}^2$,
	\item $\mathbb{Z}_{2q^2}\times\mathbb{Z}_{q^2}$,
	\item $\mathbb{Z}_{9}^2\times\mathbb{Z}_3$,
	\item $\mathbb{Z}_{4}^2\times\mathbb{Z}_2^2$,
\end{enumerate}
where $p,p_1,\dots,p_k$ are distinct primes, $l_1\ge l_2\ge\dots\ge l_k\ge0$, $t\ge2$, $s\ge0$ are integers, and $q\ge3$ is a prime.

Groups possessing the UPT property are known to be useful for constructing groups with PT property. This begs the question: can we obtain a complete list of groups with  UPT property?  In a forthcoming paper, we will prove that $\Z_{p^3}\times\mathbb{Z}_{2}^{2}$ and  $\Z_{p^2}\times\mathbb{Z}_{2}^{3}$ have the UPT property.


We also investigate the structure of tiles in groups with the PT property. As a byproduct, we prove that any non-trivial tile in the elementary $p$-groups $\Z_p^n$ with the PT property admits a periodic tiling complement. For elementary $p$-groups, we show that every subgroup of $\mathbb{Z}_p^3$ ($p\geq 3$), $\Z_{3}^{4}$, and $\Z_{2}^{6}$ possesses the PT property. This leads us to the following question:
\begin{question}\label{PTFP}
  Do  all elementary $p$-groups $\Z_p^n$ have the  PT property?
\end{question}  
Question~\ref{PTFP} appears to be closely related to the so-called ``Periodic Tiling Conjecture'' on $\Z^d$. For a positive integer $d$,  a set $E\subset \Z^d$ is said to be {\bf periodic} if there exists a finite index subgroup $\Lambda \subset  \Z^d$ such that 
$E+\lambda= E $ for each $\lambda\subset \Lambda$. Formally, we require more for the periodicity in $\Z^d$    than in finite abelian groups.
There is a large body of literature on tilings of $\Z^d$ by translations of finite subsets (see \cite{GT21,Sza04}, and references therein). In the case $d=1$, it is known that any tile of $\Z$ by a finite set $\Omega$ is periodic. However, in higher dimensions tiles need not be periodic. For $d > 1$,  Lagarias and Wang  \cite{LW96invent} proposed the  following conjecture.
\begin{thm} If a finite set $\Omega$ tiles  $\Z^d$ by translation then it admits a periodic tiling.
\end{thm}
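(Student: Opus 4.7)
The statement as written is the classical \emph{Periodic Tiling Conjecture} of Lagarias and Wang, so the ``plan'' I outline is a sketch of what is known rather than a complete proof. In dimension $d=1$ I would argue by pigeonhole on the convolution identity $\1_\Omega*\1_T\equiv 1$: knowing $\1_T$ on an interval of length $\mathrm{diam}(\Omega)$ determines $\1_T$ on the next point, so the ``state'' of $T$ evolves in a finite deterministic dynamical system on $\{0,1\}^{\mathrm{diam}(\Omega)}$. Some state must recur, which forces $T$ to be eventually periodic in both directions and hence periodic (the one-step map is invertible, so periodicity propagates backwards).

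For $d=2$ my plan would follow the ergodic-theoretic strategy of Bhattacharya. I would view $\1_T$ as a point in the shift space $\{0,1\}^{\Z^2}$ and analyze its orbit closure $X_T$ under the $\Z^2$-translation action. The tiling relation is preserved under translation, so every element of $X_T$ is itself a tiling configuration for $\Omega$; this is a very rigid constraint. I would then extract a translation-invariant probability measure, reduce to an ergodic component, and use dimension- and entropy-type arguments special to $\Z^2$ to show that $X_T$ is essentially a rotation on a compact abelian group. Any configuration in such a system is doubly periodic, which produces the desired periodic tiling of $\Z^2$.

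For general $d$ the main obstacle is fundamental, and in fact this is where my ``plan'' has to acknowledge that the statement cannot be proved as written: Greenfeld and Tao have recently constructed finite tiles in sufficiently high dimension that admit no periodic tiling at all. A strategy that is at least philosophically aligned with the present paper would be to reduce a tile of $\Z^d$ to a tile of a finite abelian quotient $\Z^d/\Lambda$ and invoke a PT-type property there; this is exactly why the question the authors raise --- which finite abelian groups have the PT property --- is the correct finite analogue. The hard part is that even for relatively simple ambient groups such as $\Z_{p^3q^2}$ the PT property already fails (Theorem~\ref{thm-nonQG}), so any lifting approach must carefully match the failure patterns on $\Z^d$ to those on the finite quotients, and no such matching is known in the dimensions where the conjecture remains open.
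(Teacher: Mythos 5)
You have correctly recognized that the statement is not a theorem proved in this paper: it is the Lagarias--Wang Periodic Tiling Conjecture, which the paper quotes verbatim and immediately surrounds with exactly the status report you give --- true and classical for $d=1$, established for $d=2$ by Bhattacharya via ergodic theory, and \emph{disproved} for sufficiently large $d$ by Greenfeld and Tao. Since the paper offers no proof (and none can exist in general), there is nothing to compare your argument against; your one-dimensional transfer-matrix sketch and your summary of the two-dimensional orbit-closure strategy are consistent with the literature the paper cites, and your conclusion that the statement cannot be proved as written is the correct one.
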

 The conjecture was  established  for $d = 2$ by Bhattacharya \cite{Bha20} using techniques from ergodic theory.   Moreover, Greenfeld and  Tao
\cite{GT21} established a quantitative version of the two-dimensional periodic tiling conjecture. 
However, in their celebrated paper  \cite{GT24},  Greenfeld and  Tao disproved  periodic tiling conjecture for the spaces of sufficiently large dimension.   

Although the definitions of periodicity in finite groups and in $\Z^d$
differ, both types of periodicity can be used to characterize the structure of tiles.

\end{document}